\newtheorem{thm}{Theorem}[section] 
\newtheorem{lem}[thm]{Lemma}
\theoremstyle{definition}
\newtheorem{de}[thm]{Definition}
\numberwithin{equation}{section}
\newcommand{\N}{\mathbb{ N}} 
\newcommand{\Z}{\mathbb{ Z}} 
\newcommand{\R}{\mathbb{ R}}
\newcommand{\ol}[1]{\overline{#1}}
\newcommand{\algop}[2]{( {#1}, {#2} )}
  \newcommand{\noSp}[2]{\genfrac{}{}{0pt}{}{#1}{#2}}
  \newcommand{\SUM}[3]{{\sum\limits_{#1}^{#2} #3}} 
  \newcommand{\eqclass}[2]{{[ \equiv_{#1}; \; #2 ]}}
  \newcommand{\lcm}{\mathrm{lcm}}
  \newcommand{\coef}{\mathrm{coeff}}
\title{Subnearrings of $\algop{\Z[x]}{+,\circ}$} 
\author{Erhard Aichinger}
\address{Erhard Aichinger,
Institut f\"ur Algebra,
Johannes Kepler Universit\"at Linz,
Altenbergerstra{\ss}e 69,
4040 Linz,
Austria}
\email{\tt erhard@algebra.uni-linz.ac.at}
\author{Sebastian Kreinecker}
\address{Sebastian Kreinecker,
Institut f\"ur Algebra,
Johannes Kepler Universit\"at Linz,
Altenbergerstra{\ss}e 69,
4040 Linz,
Austria}
\email{\tt kreinecker@algebra.uni-linz.ac.at}
\thanks{Supported by the Austrian Science Fund (FWF):P29931}
\subjclass[2010]{16Y30 (08A40)}
\keywords{nearrings, integer polynomials, subnearring membership problem}
\date{\today}
\begin{document}
\bibliographystyle{amsplain}
\begin{abstract}
  We show that the nearring $\algop{\Z[x]}{+,\circ}$ of integer polynomials,
  where the nearring multiplication is the composition of polynomials,
  has uncountably
  many subnearrings, and we give an explicit description of those
  nearrings that are generated by subsets of $\{1,x,x^2,x^3\}$.
\end{abstract} 

\maketitle

\section{Motivation and results}

A basic algebraic operation on polynomials is
\emph{composition}: the composition of the polynomials $f$ and $g$
is defined by
$f \circ g \, (x) := f (g(x))$. For example,
$(x^2 + 2) \circ (2 x^3 - 1)=  (2 x^3-1)^2+2 = 4 x^6-4 x^3+3$.
This operation $\circ$ is associative and satisfies the right distributive
law $(f + g) \circ h = f \circ h + g \circ h$, and therefore
$\algop{\Z[x]}{+, \circ}$ is a nearring \cite{Pi:Near}.
The ideal structure of this nearring was investigated in
\cite{Br:CAOP, Gu:AADL, GR:IITN}. In the present paper,
we provide some information on the set of subalgebras of this
nearring.
For this purpose, we describe the structure of
the subnearrings generated by some simple sets of polynomials,
such as the singleton $\{x^2\}$. In this nearring, we find
the polynomial $x^2 \circ (x^2 + x^2 \circ x^2 \circ x^2) -
x^2 \circ x^2 -  x^2 \circ x^2 \circ x^2 \circ x^2 =
(x^2 + x^8)^2 - x^4 - x^{16} = 2x^{10}$, and we will see
that $x^{10}$ is not an element of this nearring.
We will compute the nearring generated by each of the $16$ subsets
of $\{1,x,x^2, x^3\}$ in Section~\ref{sec:Examples}.
In other words, we describe those polynomials that can be
obtained from, say, $x^2$ by using only addition, subtraction and
composition.
Taking a broader view, we will see
that there are continuum many subnearrings of $\algop{\Z[x]}{+,\circ}$
by detecting an infinite independent subset inside the nearring $\Z[x]$ (Theorem \ref{thm:indep}).

\section{Preliminaries on nearring generation}

In this section, we state three lemmas that facilitate
the description of the nearring generated by a given set of
elements. For a nearring $\algop{N}{+,\circ}$ and a subset
$F$ of $N$, we let $\langle F \rangle$
denote the subnearring generated by $F$.
We are mainly concerned with $\algop{N}{+, \circ} =
\algop{\Z[x]}{+,\circ}$. If $F = \{f(x)\}$ for
some $f(x) \in \Z[x]$, then it is easy to see that $\langle F \rangle$
is contained in the subnearring
$\Z[f(x)] := \{q (f(x)) \mid q \in \Z[x]\}$ of $\algop{\Z[x]}{+,\circ}$.
However, we will see that $\Z[f(x)]$ is often strictly larger than
$\langle f(x) \rangle$. 
The following lemma helps to establish that a given set
$M$ is indeed the nearring generated by $F$.
\begin{lem}[{\cite[Lemma~2.1]{Ai:GPUF}}] \label{lem:genLem1} 
Let $\algop{N}{+, \circ}$ be a nearring, and let $F$ and $M$ be non-empty subsets of $N$. 
We assume that the following conditions hold:
\begin{enumerate}
\item $F \subseteq M$.
\item $M \subseteq \langle F \rangle$.
\item $M$ is closed under $+$ and $-$.
\item $F \circ M \subseteq M$.
\end{enumerate} 
Then $M = \langle F \rangle$.
\end{lem}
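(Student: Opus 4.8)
The plan is to prove the nontrivial inclusion $\langle F \rangle \subseteq M$, since the reverse inclusion is exactly hypothesis~(2). Because $\langle F \rangle$ is by definition the smallest subnearring containing $F$, it would suffice to check that $M$ itself is a subnearring containing $F$. Hypothesis~(1) supplies $F \subseteq M$, and hypothesis~(3) gives closure of $M$ under $+$ and $-$; the only missing ingredient is closure of $M$ under the composition $\circ$, that is, $M \circ M \subseteq M$. Hypothesis~(4) provides only the weaker statement $F \circ M \subseteq M$, so the real work is to bootstrap from left multiplication by $F$ to left multiplication by all of $M$.

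To do this I would introduce the \emph{multiplier set}
\[ S := \{ n \in N \mid n \circ M \subseteq M \}. \]
The claim is that $S$ is a subnearring of $N$ containing $F$. First, $F \subseteq S$ is just a restatement of hypothesis~(4). For the additive structure, one observes that right distributivity forces $0 \circ m = (0+0) \circ m = 0 \circ m + 0 \circ m$, hence $0 \circ m = 0$ for every $m \in M$; consequently $(-b) \circ m = -(b \circ m)$. Then for $a, b \in S$ and $m \in M$, right distributivity yields $(a - b) \circ m = a \circ m - b \circ m \in M$ by hypothesis~(3), so $a - b \in S$, and likewise $a + b \in S$. For closure under composition, associativity of $\circ$ gives $(a \circ b) \circ m = a \circ (b \circ m)$; since $b \in S$ we have $b \circ m \in M$, and then $a \in S$ gives $a \circ (b \circ m) \in M$, so $a \circ b \in S$.

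With $S$ established as a subnearring containing $F$, minimality of $\langle F \rangle$ yields $\langle F \rangle \subseteq S$. The decisive step is to feed hypothesis~(2) into this: from $M \subseteq \langle F \rangle \subseteq S$, every element of $M$ lies in $S$, which by definition of $S$ means precisely $M \circ M \subseteq M$. This upgrades $M$ to a genuine subnearring, closed under $+$, $-$, and $\circ$, that contains $F$, whence $\langle F \rangle \subseteq M$. Combining this with hypothesis~(2) gives $M = \langle F \rangle$.

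I expect the main obstacle to be conceptual rather than computational: the tempting move is to try to prove $M \circ M \subseteq M$ directly, which does not follow from hypothesis~(4) alone. The trick that makes the argument work is to package the property ``left multiplies $M$ into $M$'' as the set $S$, to verify that $S$ is closed under all three operations (here the right distributive law together with the identity $0 \circ m = 0$ carry the additive part, and associativity carries the multiplicative part), and then to exploit the sandwich $M \subseteq \langle F \rangle \subseteq S$ so that hypothesis~(2) converts information about $\langle F \rangle$ back into the missing closure property of $M$.
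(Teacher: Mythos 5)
The paper does not prove this lemma at all: it is imported verbatim from \cite[Lemma~2.1]{Ai:GPUF}, so there is no internal proof to compare against. Your argument is correct and complete: the multiplier set $S=\{n\in N\mid n\circ M\subseteq M\}$ is indeed a subnearring containing $F$ (the identities $0\circ m=0$ and $(-b)\circ m=-(b\circ m)$ follow from right distributivity exactly as you say, and $M$ contains $0$ since it is non-empty and closed under $-$), so $M\subseteq\langle F\rangle\subseteq S$ upgrades hypothesis~(4) to $M\circ M\subseteq M$ and the conclusion follows. This is a standard and fully valid way to establish the lemma.
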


Following \cite[Definitions~1.7~and~1.11]{Pi:Near}, we call
an element $a$
in a nearring $\algop{N}{+,\circ}$ 
\emph{right cancellable in $N$} if for all $x, y \in N$ with $x \circ a =
y \circ a$, we have $x = y$; using the distributive law,
one obtains 
that $a$ is right cancellable if and only if $x \circ a = 0$ implies $x = 0$
for all $x \in N$. An element $e \in \N$ is a \emph{left identity} if
$e \circ x = x$ for all $x \in N$, and an element $c \in N$ is \emph{constant}
if $c \circ 0 = c$. If $c$ is constant, then $c \circ x = c$
for all $x \in N$. The set of constant elements of $N$ will be denoted
by $C(N)$.
In the nearring $\algop{\Z[x]}{+,\circ}$, every $p$ with
$\deg (p) \ge 1$ is right cancellable, the polynomial $x$ is
the only left identity, and the constant elements of
$\Z[x]$ are given by $C(\Z[x]) = \{ c_0  x^0 \mid c_0 \in \Z \}$.
If $e$ is a left identity and $F$ a subset of $N$, then sometimes
the nearrings $\langle F \rangle$ and
$\langle F \cup \{e\} \rangle$ are closely related:
\begin{lem} \label{lem:genLem2}
  Let $N$ be a nearring, let $a \in N$ be a right cancellable element
  of $N$, let $e$ be a left identity of $N$, and let $C$ be a
  subset of $C(N)$.
  Let $F$ be the subnearring of $N$ generated by $\{a\} \cup C$, and let
  $G$ be the subnearring of $N$ generated by $\{a, e\} \cup C$.
  Then $G = \{ n \in N \mid n \circ a \in F\}$.
\end{lem}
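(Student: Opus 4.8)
The plan is to set $M := \{n \in N \mid n \circ a \in F\}$ and to prove $M = G$ by checking the four hypotheses of Lemma~\ref{lem:genLem1} for the generating set $F' := \{a,e\}\cup C$, so that Lemma~\ref{lem:genLem1} delivers $M = \langle F'\rangle = G$ in one stroke. I would first record the elementary consequences of right distributivity that $0 \circ a = 0$, hence $(-n)\circ a = -(n\circ a)$, and that every constant $c$ satisfies $c \circ x = c$. Verifying $F' \subseteq M$ is then routine: $e \circ a = a \in F$, $c \circ a = c \in F$ for $c \in C$, and $a \circ a \in F$ since $a \in F$ and $F$ is closed under composition. Closure of $M$ under $+$ and $-$ is immediate, because $(m_1 \pm m_2)\circ a = m_1 \circ a \pm m_2 \circ a \in F$. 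The absorption condition $F' \circ M \subseteq M$ is the pleasant surprise of the argument: for $m \in M$ we have $e \circ m = m \in M$ and $c \circ m = c \in M$, while $(a \circ m)\circ a = a \circ (m \circ a) \in F$ because $m \circ a \in F$, $a \in F$, and $F$ is closed under $\circ$; hence $a \circ m \in M$.

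The substantial step is condition~(2) of Lemma~\ref{lem:genLem1}, namely $M \subseteq \langle F'\rangle = G$. For this I would isolate the auxiliary claim that $F \subseteq G \circ a$, where $G \circ a := \{g \circ a \mid g \in G\}$. The idea is that $G \circ a$ is itself a subnearring of $N$ containing $\{a\}\cup C$: it contains $a = e \circ a$ and each $c = c \circ a$; it is closed under $\pm$ since $(g_1 \pm g_2)\circ a = g_1 \circ a \pm g_2 \circ a$ with $g_1 \pm g_2 \in G$; and it is closed under composition because, using associativity, $(g_1 \circ a)\circ(g_2 \circ a) = \big(g_1 \circ (a \circ g_2)\big)\circ a$ with $g_1 \circ (a\circ g_2) \in G$. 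As $F$ is the subnearring generated by $\{a\}\cup C$, this forces $F \subseteq G \circ a$.

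With the auxiliary claim in hand, condition~(2) follows at once from the right cancellability of $a$: given $n \in M$ we have $n \circ a \in F \subseteq G\circ a$, so $n \circ a = g \circ a$ for some $g \in G$, and right cancellability yields $n = g \in G$. Applying Lemma~\ref{lem:genLem1} to $F'$ and $M$ now gives $M = \langle F'\rangle = G$, which is precisely the assertion $G = \{n \in N \mid n \circ a \in F\}$.

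I would single out the auxiliary claim $F \subseteq G \circ a$ as the crux. The tempting alternative is to attack $M \subseteq G$ by describing the elements of $F$ explicitly, as integer combinations of the compositional powers $a^{\circ k}$ together with a constant summand; but in a general nearring the failure of the \emph{left} distributive law makes such a structural description of $F$ awkward, since $a \circ (u+v)$ need not split. Recognizing instead that $G \circ a$ is a subnearring lets one bypass the internal structure of $F$ entirely and reduce everything to right cancellation. The analogous-looking question of whether $M$ is closed under composition directly, which would require $m_1 \circ (m_2 \circ a) \in F$ and is not obvious, is exactly what invoking Lemma~\ref{lem:genLem1} through the weaker condition $F' \circ M \subseteq M$ spares us from proving by hand.
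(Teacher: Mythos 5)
Your proof is correct, and it takes a genuinely different route from the paper's. The paper establishes both inclusions by explicit induction on the depth of nearring terms: one claim shows that composing any term evaluated at $a,\ol{c},e$ with a term evaluated at $a,\ol{c}$ again yields a term evaluated at $a,\ol{c}$ (giving $G\subseteq M$), and a second claim constructs a syntactic rewriting $R$ with $R(T)^N(a,\ol{c},e)\circ a=T^N(a,\ol{c})$ (giving $M\subseteq G$ via right cancellation). You replace the first induction by an application of Lemma~\ref{lem:genLem1} to $F'=\{a,e\}\cup C$ and $M=\{n\in N\mid n\circ a\in F\}$; your verifications of conditions (1), (3) and (4) are all valid, including the key step $(a\circ m)\circ a=a\circ(m\circ a)\in F$. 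You replace the second induction by the semantic observation that $G\circ a$ is a subnearring of $N$ containing $\{a\}\cup C$ and hence contains $F$; the closure computations, in particular $(g_1\circ a)\circ(g_2\circ a)=\bigl(g_1\circ(a\circ g_2)\bigr)\circ a$, use only associativity and the right distributive law and so hold in an arbitrary nearring. Your auxiliary claim $F\subseteq G\circ a$ is exactly the content of the paper's rewriting map $R$, stated semantically rather than syntactically. What your approach buys is brevity and reuse of Lemma~\ref{lem:genLem1}, whose absorption condition $F'\circ M\subseteq M$ indeed spares you from showing that $M$ is closed under composition; what the paper's approach buys is an explicit, self-contained term-level construction. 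Both arguments are sound.
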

\begin{proof}
  For this proof, we resort to some notions from universal algebra
  \cite{BS:ACIU}. In the setting of universal algebra,
  we see $N$ as an algebra $\algop{N}{+^N,-^N,0^N, \circ^N}$,
  where $+,-,0,\circ$ are the operation symbols and
  $+^N, -^N, 0^N, \circ^N$ denote their interpretations as
  finitary operations on $N$.
  The subnearring generated by a subset $B$ of $N$
  can then be described as the set
  of all $T^N (b_1,\ldots, b_k)$, where $k \in \N_0$,  $T$ is a term over the variables
  $X_1, \ldots,X_k$ in the language $\{+,-,0,\circ\}$, and $b_1, \ldots, b_k \in B$ \cite[Theorem~II.10.3(c)]{BS:ACIU}.
  Here $T^N$ denotes the term function that the term $T$ induces on $N$.
  For $k \in \N_0$, let  $F(X,Y_1,\ldots, Y_k) = F(X, \ol{Y})$ denote the set of terms over the variables $X, Y_1, \ldots, Y_k$,
  and $F(X,Y_1,\dots,Y_k,Z) = F(X, \ol{Y}, Z)$ denote the set of terms over the variables $X, Y_1, \dots, Y_k, Z$
  in the language of nearrings.
  Our first claim is:
  \begin{multline} \label{eq:c1}
    \forall k \in \N_0 \,
    \forall T \in F(X,\ol{Y},Z) \, \forall S \in F(X, \ol{Y}) \, \exists U \in F(X, \ol{Y}) \,
     \forall \ol{c} \in C(N)^k \,:\, \\
    T^{N} (a, \ol{c}, e) \, \, \circ^N \, S^N (a, \ol{c}) = U^N(a, \ol{c}).
  \end{multline}
  We fix $k \in \N_0$ and 
  proceed by induction on the depth of the term $T$.
  If $T = 0$, then set $U := 0$.
  If $T = X$, then we let $U(X, \ol{Y}) := X \circ S(X, \ol{Y})$.
  We fix $\ol{c} \in C(N)^k$ 
  and compute
  $T^{N} (a, \ol{c}, e) \, \circ^N S^N (a, \ol{c}) =
  a \, \circ^N S^N (a, \ol{c}) = U^N (a, \ol{c})$.
  If $T = Y_j$ with $j \in \{1,\ldots, k\}$, we set $U(X, \ol{Y}) := Y_j \circ S(X, \ol{Y})$ (note that
  $U(X,\ol{Y}) := Y_j$ would also work), and if 
  $T = Z$, we set $U := S$.
  
  For the induction step, suppose first that $T = T_1 + T_2$.
  The induction hypothesis yields $U_1, U_2 \in F(X, \ol{Y})$ with
  $T^N_i (a,\ol{c},e) \, \circ^N S^N (a,\ol{c}) = U^N_i (a,\ol{c})$ for $i \in \{1,2\}$ and for
  all $\ol{c} \in C(N)^k$, and thus
  $T^N (a,\ol{c},e) \, \circ^N S^N (a,\ol{c})  =
  (T_1^N (a,\ol{c},e) +^N T_2^N (a,\ol{c},e)) \, \circ^N S^N (a,\ol{c}) =
   T_1^N (a,\ol{c},e) \, \circ^N S^N (a,\ol{c}) \, +^N \, T_2^N(a,\ol{c},e) \circ S^N (a,\ol{c}) =
   U_1^N (a,\ol{c}) +^N U_2^N (a,\ol{c})$ for all $\ol{c} \in C(N)^k$.
  Now $U := U_1 + U_2$ is the
  required term.
  The case $T = T_1 - T_2$ is done similarly.
  Next, we suppose that $T = T_1 \circ T_2$.
  Then for all $\ol{c} \in C(N)^k$, we have $T^N (a,\ol{c},e) \, \circ^N S^N (a,\ol{c}) =
  (T_1^N (a,\ol{c},e) \, \circ^N T_2^N (a,\ol{c},e)) \, \circ^N S^N (a,\ol{c}) =
  T_1^N (a,\ol{c},e) \, \circ^N (T_2^N (a,\ol{c},e) \, \circ^N S^N (a,\ol{c}))$. 
  By the induction hypothesis, we have $U_2 \in F(X,\ol{Y})$ with
  $T_2^N (a,\ol{c},e) \, \circ^N S^N (a,\ol{c}) = U^N_2 (a,\ol{c})$ for all $\ol{c} \in C(N)^k$. Thus
  $T_1^N (a,\ol{c},e) \, \circ^N (T_2^N (a,\ol{c},e) \, \circ^N S^N (a,\ol{c})) = T_1^N (a,\ol{c},e)
  \, \circ^N U_2^N (a,\ol{c})$ for all $\ol{c} \in C(N)^k$.
  Using the induction hypothesis again
  (for $T_1$), we obtain $U \in F(X,\ol{Y})$ with
  $T_1^N (a,\ol{c},e)
  \, \circ^N U_2^N (a,\ol{c}) = U^N (a,\ol{c})$.
  This completes the proof of claim~\eqref{eq:c1}.
  Now we are ready to prove
  \begin{equation} \label{eq:i1}
    G \subseteq \{ n \in N \mid n \, \circ^N a \in F\}.
  \end{equation}
  To this end, let $g \in G$. Then there is $k \in \N_0$, there are
  $c_1, \ldots, c_k \in C$, and there is $T \in F(X,\ol{Y},Z)$ with
  $g = T^N (a,\ol{c},e)$. Our goal is to show that $g \, \circ^N a  \in F$.
  To this end, we observe that $g \, \circ^N a = T^N (a,\ol{c},e) \, \circ^N S^N  (a,\ol{c})$
  where $S(X,\ol{Y}) = X$. Now claim~\eqref{eq:c1} yields $U \in F(X,\ol{Y})$ with
  $T^N (a,\ol{c},e) \, \circ^N S^N  (a,\ol{c}) = U^N (a,\ol{c})$. Since $U^N (a,\ol{c}) \in F$,
  we obtain $g \, \circ^N a \in F$, which completes the proof of~\eqref{eq:i1}.

  For establishing the converse inclusion, we define for each $k \in \N_0$
  a mapping
  $R : F(X, \ol{Y}) \to F(X,\ol{Y},Z)$ inductively by $R(0) := 0$,
  $R(X) := Z$, $R(Y_j) := Y_j$ for $j \in \{1,\ldots, k\}$,
  $R(T_1 \circ T_2) := T_1 \circ R(T_2)$,
  $R(T_1 + T_2) := R(T_1) + R(T_2)$, $R (T_1 - T_2) := R(T_1) - R(T_2)$.
  Now we claim:
  \begin{equation} \label{eq:c2}
    \forall T \in F(X, \ol{Y}) \, \forall \ol{c} \in C(N)^k \,:\, R(T)^N (a,\ol{c},e) \, \circ^N a = T^N (a,\ol{c}).
  \end{equation}
  We proceed by induction on the depth of $T$.
  For $T = 0$, both sides of~\eqref{eq:c2} evaluate to $0^N$.
  For $T = X$, we compute
  $R(X)^N (a,\ol{c},e) \, \circ^N a = e \, \circ^N a =
  a = T^N (a,\ol{c})$.
  For $T = Y_j$,  we notice that $R(T) = Y_j$ and compute
  $R(T)^N (a,\ol{c},e) \, \circ^N a = c_j \, \circ^N a$. Using
  that $c_j$ is constant, we have $c_j \, \circ^N a = c_j = T^N (a, \ol{c})$.
  
  For the induction step, we see that the cases
  $T = T_1 + T_2$ and $T = T_1 - T_2$ can be done with
  routine calculations using
  the induction hypothesis and the right distributive law.
  Now let $T = T_1 \circ T_2$. Then we compute
  $(R (T_1 \circ T_2))^N (a, \ol{c}, e) \, \circ^N a =
   (T_1 \circ R(T_2))^N (a, \ol{c}, e) \, \circ^N a =
   (T_1^N (a, \ol{c}) \, \circ^N R(T_2)^N (a, \ol{c}, e)) \, \circ^N a =
    T_1^N (a, \ol{c}) \, \circ^N (R(T_2)^N (a, \ol{c}, e) \, \circ^N a)$.
  Using the induction hypothesis, the last expression is equal to
   $T_1^N (a,\ol{c}) \, \circ^N T_2^N (a,\ol{c}) = (T_1 \circ T_2)^N (a,\ol{c}) = T^N (a,\ol{c})$.
  This completes the proof of claim~\eqref{eq:c2}.
  Next, we prove
  \begin{equation} \label{eq:i2}
    \{ n \in N \mid n \, \circ^N a \in F\} \subseteq G.
  \end{equation}
  Let $n \in N$ be such that $n \, \circ^N a \in F$. Then
  there is $k \in \N_0$, there are $c_1, \ldots, c_k \in C$ and there is
  a term $T \in F(X,\ol{Y})$ such that $n \, \circ^N a = T^N (a,\ol{c})$.
  Hence, by claim~\eqref{eq:c2},
  $n \, \circ^N a = R(T)^N (a,\ol{c},e) \, \circ^N a$. Since $a$ is right
  cancellable, $n = R(T)^N (a,\ol{c}, e)$, and therefore $n$ lies
  in the nearring generated by $\{a, c_1, \ldots , c_k, e\}$, and thus in $G$.
  \end{proof}
Setting $C := \emptyset$, we obtain the following consequence:
 \begin{lem} \label{lem:genLem3}
  Let $N$ be a nearring, let $a \in N$ be a right cancellable element
  of $N$, let $e$ be a left identity of $N$.
  Let $F$ be the subnearring of $N$ generated by $\{a\}$, and let
  $G$ be the subnearring of $N$ generated by $\{a, e\}$.
  Then $G = \{ n \in N \mid n \circ a \in F\}$.
\end{lem}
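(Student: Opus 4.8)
The plan is to obtain this statement as an immediate specialization of Lemma~\ref{lem:genLem2}, taking $C := \emptyset$. First I would check that the hypotheses transfer. The element $a$ is right cancellable and $e$ is a left identity by assumption, and $\emptyset$ is trivially a subset of $C(N)$, so all three premises of Lemma~\ref{lem:genLem2} are satisfied with this choice of $C$. No additional work is needed to verify applicability.

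Next I would match up the named objects. With $C = \emptyset$ we have $\{a\} \cup C = \{a\}$ and $\{a,e\} \cup C = \{a,e\}$, so the subnearring $F$ generated by $\{a\} \cup C$ in Lemma~\ref{lem:genLem2} is precisely the subnearring generated by $\{a\}$ in the present statement, and likewise the subnearring $G$ generated by $\{a,e\} \cup C$ coincides with the one generated by $\{a,e\}$ here. Thus the symbols $F$ and $G$ denote literally the same nearrings in both lemmas, and the conclusion $G = \{n \in N \mid n \circ a \in F\}$ of Lemma~\ref{lem:genLem2} is exactly the equality to be proved.

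Since this is a pure specialization, I do not expect any genuine obstacle; the only point worth noting is the bookkeeping of the generating sets, namely confirming that adjoining the empty family of constants leaves both $F$ and $G$ unchanged. Concretely, in the proof of Lemma~\ref{lem:genLem2} the constant tuples $\ol{c} \in C(N)^k$ all collapse to the empty tuple (the case $k = 0$ in $\N_0$), so the constant-element reasoning is vacuous here and the two claims~\eqref{eq:c1} and~\eqref{eq:c2} reduce to statements about terms in the single variable pair $X, Z$ evaluated at $a$ and $e$. Hence the result follows directly.
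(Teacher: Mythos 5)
Your proposal is correct and is exactly the paper's argument: the authors derive Lemma~\ref{lem:genLem3} from Lemma~\ref{lem:genLem2} by setting $C := \emptyset$, with no further work. Your additional remarks on the empty tuple of constants are accurate bookkeeping but not needed.
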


\section{Preliminaries from elementary number theory} 

The proof of Theorem \ref{thm:indep} will rely
on number theoretic properties of the multinomial coefficients.
For $p \in \Z[x]$ and $i \in \N_0$, we denote  
the coefficient of $x^i$ in the polynomial $p$ by $\coef(p, i)$,
where $\coef(p, i) =0$ if $i> \deg(p)$.
For $k = k_1 + \dots + k_n$, we 
abbreviate  the \emph{multinomial coefficient}
$\frac{k!}{k_1! \cdots k_n!}$ by
$\binom{k}{k_1, \dots , k_n}$.
 
\begin{lem}[Corollary 32.1 of \cite{Si:DOBAMC}] \label{lem:pdividesmultinom}
 Let $p \in \mathbb{P}$, let $n, s \in \N$, let $j \in \{1,\ldots, n\}$, and let $k,  k_1, \ldots, k_n \in \N_0$ be such that $k= k_1 + \ldots + k_n$.
 If $p^s$ divides $k$ and $\gcd(k_j, p) = 1$, then $p^s$ 
 divides $\binom{k}{k_1, \ldots, k_n}.$
\end{lem}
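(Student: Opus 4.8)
The plan is to reduce the statement to a computation with the $p$-adic valuation. Write $v_p(m)$ for the largest integer $t$ with $p^t \mid m$. By Legendre's formula, $v_p(m!) = \sum_{t=1}^{\infty} \lfloor m/p^t \rfloor$, and since $\binom{k}{k_1,\ldots,k_n} = k!/(k_1! \cdots k_n!)$, this gives
\[
v_p\left(\binom{k}{k_1,\ldots,k_n}\right) = \sum_{t=1}^{\infty} \left( \lfloor k/p^t \rfloor - \sum_{i=1}^{n} \lfloor k_i/p^t \rfloor \right).
\]
First I would observe that, because $k = k_1 + \cdots + k_n$ and the floor function is superadditive, every summand $D_t := \lfloor k/p^t \rfloor - \sum_{i=1}^n \lfloor k_i/p^t \rfloor$ is a nonnegative integer (this incidentally reproves that the multinomial coefficient is an integer). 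Since summing is monotone, it then suffices to show $D_t \ge 1$ for each $t \in \{1,\ldots,s\}$: this yields $v_p(\binom{k}{k_1,\ldots,k_n}) \ge s$, which is exactly the claim $p^s \mid \binom{k}{k_1,\ldots,k_n}$.

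Next I would exploit the hypothesis $p^s \mid k$. For $t \in \{1,\ldots,s\}$ we have $p^t \mid k$, so $\lfloor k/p^t \rfloor = k/p^t = \sum_{i=1}^{n} k_i/p^t$, and therefore
\[
D_t = \sum_{i=1}^{n} \left( \frac{k_i}{p^t} - \left\lfloor \frac{k_i}{p^t} \right\rfloor \right) = \sum_{i=1}^{n} \left\{ \frac{k_i}{p^t} \right\},
\]
a sum of fractional parts. The condition $\gcd(k_j, p) = 1$ now enters: it forces $p \nmid k_j$, hence $p^t \nmid k_j$ for every $t \ge 1$, so $k_j/p^t \notin \Z$ and the $j$-th fractional part $\{k_j/p^t\}$ is strictly positive.

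Finally I would combine the two observations for each $t \in \{1,\ldots,s\}$: the quantity $D_t$ is simultaneously a nonnegative integer and a sum of fractional parts one of which is strictly positive, so $D_t > 0$, and being an integer it satisfies $D_t \ge 1$. Summing over $t$ gives $v_p(\binom{k}{k_1,\ldots,k_n}) \ge \sum_{t=1}^{s} D_t \ge s$, as required. The only subtle point is this last inference, that an integer expressible as a sum of fractional parts with at least one strictly positive term must be at least $1$; everything else is Legendre's formula together with routine bookkeeping. An alternative route would be Kummer's theorem, reading $D_t$ as a count of carries when the $k_i$ are added in base $p$, but the direct valuation computation appears to be the cleanest.
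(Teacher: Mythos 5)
Your argument is correct and complete. Note, however, that the paper does not prove this lemma at all: it is imported verbatim as Corollary~32.1 of Singmaster's article, so there is no in-paper proof to compare against. Your Legendre-formula computation is a valid self-contained substitute. The decomposition $v_p\bigl(\binom{k}{k_1,\ldots,k_n}\bigr)=\sum_{t\ge 1}D_t$ with $D_t=\lfloor k/p^t\rfloor-\sum_i\lfloor k_i/p^t\rfloor\ge 0$ is sound, and the key step is handled properly: for $t\le s$ the hypothesis $p^t\mid k$ turns $D_t$ into $\sum_i\{k_i/p^t\}$, and $\gcd(k_j,p)=1$ forces $k_j\ne 0$ and $p\nmid k_j$ (since $\gcd(0,p)=p$), so the $j$-th fractional part is strictly positive and the integer $D_t$ is at least $1$. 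Summing over $t=1,\ldots,s$ gives $v_p\ge s$. The equivalence you mention with Kummer's carry-counting is indeed the way Singmaster's original treatment is usually phrased; your direct valuation computation buys a shorter, fully elementary derivation that needs nothing beyond Legendre's formula and superadditivity of the floor function, at the cost of not exposing the combinatorial meaning of the $D_t$ as carries. Either way, nothing is missing.
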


\begin{lem} \label{lem:2divmult}
  Let $n, m \in \N$, and
  let $l_1, \ldots, l_n, k_1, \ldots, k_n, k \in \N_0$ 
  be
  such that $k= k_1 + \ldots + k_n$, $k \equiv_2 0$, 
  and for all $i \in \{1,\ldots, n\}$, $l_i \equiv_2 0$.
  We assume that
 $2^{m + 1}-2= l_1 k_1 + \ldots +l_n k_n$.
 Then $2$ divides $\binom{k}{k_1, \ldots, k_n}.$
\end{lem}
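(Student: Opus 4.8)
The plan is to deduce the statement from Lemma~\ref{lem:pdividesmultinom} applied with the prime $p = 2$ and exponent $s = 1$. For this application the hypothesis $k \equiv_2 0$ already supplies the requirement that $2^s = 2$ divides $k$, so the entire task reduces to producing an index $j \in \{1, \ldots, n\}$ with $\gcd(k_j, 2) = 1$, that is, an index at which $k_j$ is odd.

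To locate such an index, I would first exploit the assumption that every $l_i$ is even. Writing $l_i = 2 l_i'$ with $l_i' \in \N_0$ and substituting into the defining relation, the equation $2^{m+1} - 2 = l_1 k_1 + \ldots + l_n k_n$ becomes $2(2^m - 1) = 2(l_1' k_1 + \ldots + l_n' k_n)$, and after cancelling the common factor $2$ we obtain $2^m - 1 = l_1' k_1 + \ldots + l_n' k_n$. This is the key manoeuvre: the specific value $2^{m+1} - 2 = 2(2^m - 1)$ is chosen precisely so that, once the guaranteed factor of $2$ coming from the $l_i$ is stripped away, an \emph{odd} number remains on the left-hand side (here $m \ge 1$, so $2^m - 1$ is odd).

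Since $\sum_{i=1}^{n} l_i' k_i = 2^m - 1$ is odd, at least one summand $l_j' k_j$ must be odd; for that index $j$ both factors are odd, and in particular $k_j$ is odd, i.e.\ $\gcd(k_j, 2) = 1$. Now Lemma~\ref{lem:pdividesmultinom}, invoked with $p = 2$, $s = 1$, and this $j$, yields that $2$ divides $\binom{k}{k_1, \ldots, k_n}$, which is the desired conclusion.

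The argument is short, and I do not expect a genuine obstacle. The only point that requires care is the verification that an odd $k_j$ exists, since this is exactly the hypothesis $\gcd(k_j, p) = 1$ demanded by Lemma~\ref{lem:pdividesmultinom}; everything hinges on the parity bookkeeping of the previous paragraph, and in particular on using $m \ge 1$ to guarantee that $2^m - 1$ is odd rather than zero.
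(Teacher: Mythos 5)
Your proposal is correct and follows essentially the same route as the paper: both invoke Lemma~\ref{lem:pdividesmultinom} with $p=2$, $s=1$, and both rest on the observation that $2^{m+1}-2$ is twice an odd number, so the evenness of all the $l_i$ forces some $k_j$ to be odd. The paper phrases this as a proof by contradiction (if all $k_j$ were even, then $4$ would divide $\sum l_i k_i = 2^{m+1}-2$), while you argue directly by cancelling the factor $2$; these are the same parity argument.
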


\begin{proof}
  Seeking a contradiction, we suppose that
  $\binom{k}{k_1, \ldots, k_n}$ is odd.
  Then we get from Lemma \ref{lem:pdividesmultinom} that either
  $k$ is odd, or for all $j \leq n$, $k_j$ is even. 
  Since $k$ is even by assumption, all $k_j$ are even.
   Since $l_1, \ldots, l_n$ are also even, $4$ divides $l_1 k_1 + \ldots +l_n k_n$.
 But $4$ does not divide  $2^{m + 1} -2$, which contradicts the assumption
 $2^{m + 1}-2 = l_1 k_1 + \ldots +l_n k_n.$
\end{proof}

\section{Uncountably many subnearrings} \label{sec:unManySub}
We let
$P (\N)$ denote the power set of $\N$.
\begin{thm} \label{thm:indep}
  For $i \in \N$, let $p_i := x^{2^{i+1} - 2}$, let $S$ be the set of
  all subnearrings of $\algop{\Z[x]}{+, \circ}$ and let
  $\Phi : P(\N) \to S$, where for each $A \subseteq \N$, $\Phi (A)$ is the subnearring
  of $\Z[x]$ that is generated by $\{ p_i \mid i \in A \}$. 
  Then we have:
  \begin{enumerate}
  \item \label{it:t1} For each $j \in \N$, $p_j$ is not an element of the nearring
        $\Phi (\N \setminus \{j\})$.
  \item \label{it:t2} For all $A,B \in P(\N)$ we
    have $\Phi (A) \subseteq \Phi (B)$ if and only if $A \subseteq B$. In particular,
      $\Phi$ is injective and hence $|S| = 2^{\aleph_0}$.
  \item \label{it:t3} $(S , \subseteq)$ contains a subset that is order isomorphic
    to $(P(\N), \subseteq)$.
  \end{enumerate}  
\end{thm}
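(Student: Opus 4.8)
\textit{Overview.} I would treat (1) as the entire content of the theorem and obtain (2) and (3) as formal consequences. For the easy implication in (2), since generation is monotone, $A \subseteq B$ immediately gives $\Phi(A) \subseteq \Phi(B)$. For the converse, suppose $\Phi(A) \subseteq \Phi(B)$ but $j \in A \setminus B$; then $p_j \in \Phi(A) \subseteq \Phi(B) \subseteq \Phi(\N \setminus \{j\})$ (the last inclusion by monotonicity, as $B \subseteq \N\setminus\{j\}$), contradicting (1). Hence $A \subseteq B$, and in particular $\Phi$ is injective. Since $\Z[x]$ is countable we have $|S| \le |P(\Z[x])| = 2^{\aleph_0}$, while injectivity of $\Phi$ gives $|S| \ge |P(\N)| = 2^{\aleph_0}$, so $|S| = 2^{\aleph_0}$. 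Part (3) is then just the remark that, by (2), $\Phi$ is an order embedding of $(P(\N),\subseteq)$ into $(S,\subseteq)$, so its image is the required order-isomorphic copy.

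\textit{Strategy for (1).} Fix $j$ and write $e_i := 2^{i+1}-2$, so $p_i = x^{e_i}$. The plan is to exhibit a concrete subnearring of $\algop{\Z[x]}{+,\circ}$ that contains every $p_i$ with $i \ne j$ but excludes $p_j$. I propose
\[
   M := \{\, f \in \Z[x] \mid \text{every monomial of } f \text{ has positive even degree, and } \coef(f,e_j) \text{ is even} \,\}.
\]
Because $e_i$ is even and $e_i \ne e_j$ for $i \ne j$, each $p_i$ with $i \ne j$ lies in $M$, whereas $p_j = x^{e_j}$ has $\coef(p_j,e_j)=1$ and so $p_j \notin M$. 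Thus, once $M$ is shown to be a subnearring, the fact that $\Phi(\N\setminus\{j\})$ is the \emph{smallest} subnearring containing $\{p_i \mid i \ne j\}$ yields $\Phi(\N\setminus\{j\}) \subseteq M$, and therefore $p_j \notin \Phi(\N\setminus\{j\})$.

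\textit{The closure check.} Clearly $0 \in M$, and closure under $+$ and $-$ is routine, since the even-degree condition is preserved by sums and differences and the parity of $\coef(\cdot,e_j)$ is additive. The only delicate point, and the place where Lemma~\ref{lem:2divmult} is used, is closure under $\circ$. Given $f,g \in M$, write $g = \sum_{i=1}^{n} a_i x^{l_i}$ with distinct even exponents $l_i > 0$. Every exponent $k$ occurring in $f$ is even (again by the even-degree condition on $f$), and by the multinomial theorem the coefficient of $x^{e_j}$ in $g^{k}$ is $\sum \binom{k}{k_1,\dots,k_n} a_1^{k_1}\cdots a_n^{k_n}$, summed over $k_1+\cdots+k_n = k$ with $l_1 k_1 + \cdots + l_n k_n = e_j = 2^{j+1}-2$. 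Each such multinomial coefficient meets the hypotheses of Lemma~\ref{lem:2divmult} (all $l_i$ even and $k=\sum k_i$ even), hence is even; so $\coef(g^{k},e_j)$ is even for every relevant $k$, and consequently $\coef(f\circ g,e_j)=\sum_k \coef(f,k)\,\coef(g^{k},e_j)$ is even. Since all monomials of $f\circ g$ plainly have positive even degree, we conclude $f \circ g \in M$.

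\textit{The main obstacle.} The crux — and the reason for the exponents $2^{i+1}-2$ — is exactly this composition step: one must ensure that whenever the target degree $e_j$ is assembled from even parts $l_i$ via an even power $k$, the governing multinomial coefficient is forced to be even. This is already visible in $(x^a+x^b)^2 = x^{2a}+2x^{a+b}+x^{2b}$, whose cross term is even, and Lemma~\ref{lem:2divmult} is precisely the general statement delivering this for the degree $2^{j+1}-2$, exploiting that $4 \nmid 2^{j+1}-2$. I would therefore invest essentially all the care in making the composition computation and the invocation of Lemma~\ref{lem:2divmult} rigorous — including the degenerate cases $g=0$ and $k=0$ (where the coefficient of $x^{e_j}$ vanishes for degree reasons) — and regard the remaining bookkeeping as formal.
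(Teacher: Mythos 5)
Your proposal is correct and is essentially the paper's argument: the paper proves item~(1) by induction on the terms representing elements of $\Phi(\N\setminus\{j\})$, showing that every such element lies in $\Z[x^2]$ and has even coefficient at $x^{2^{j+1}-2}$ --- which is precisely the closure verification for your set $M$ --- and the decisive step in both is the same application of the multinomial theorem together with Lemma~\ref{lem:2divmult}. Items~(2) and~(3) are deduced from~(1) exactly as in the paper.
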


\begin{proof} %
  \eqref{it:t1}
Let $j \in \N$, let $\mathbb{K} =  \N \setminus \{j\} $ and
let $N^* := \langle \{p_i \mid i \in \mathbb{K} \}  \rangle$.
We show that for each $p \in N^*$,
$\coef(p, 2^{j+1}-2)$ is even. To this end, let
$p \in N^*$.
Next, we define infinitely many unary operations on $\Z[x]$;
actually, for each $i \in \mathbb{K}$, we define a unary
operation $\overline{p_i}$ on $\Z[x]$ by
$\overline{p_i}(q) = p_i \circ q$
for $q \in \Z[x]$.
Let $M$ be the subalgebra of
\[
     \algop{\Z[x]}{+, -, 0, (\overline{p_i})_{i \in \mathbb{K}}}
\]
that is generated by $F = \{p_i \mid i \in \mathbb{K}\}$.
Then  Lemma \ref{lem:genLem1} yields $N^* = M$.
Hence there exists a term $T_p$ 
 over the variables $(X_i)_{i \in \mathbb{K}}$ in the language of $M$, which has
 $+, -, ({\overline{p_i}})_{i \in \mathbb{K}} $ as operation symbols, such that $p={T_p}^M ( (p_i)_{i \in \mathbb{K}} )$.

 We will now use induction on the depth of $T$ to show that for
 each term $T$, the coefficient of $x^{2^{j+1}-2}$ in the polynomial
 $T^M ( (p_i)_{i \in \mathbb{K}} )$ is even.
 If $T = X_i$ with $i \in \mathbb{K}$, then $T^M((p_i)_{i \in \mathbb{K}}) = p_i$,
 and the coefficient of $x^{2^{j+1}-2}$ in $p_i$ is $0$, hence even.
 If $T = 0$, the assertion is obvious.
For the induction step, suppose first that $T= T_1 + T_2$ or $T= T_1 - T_2$. 
Then the induction hypothesis yields $\coef(T_1^M((p_i)_{i \in \mathbb{K}}), 2^{j+1}-2) \equiv_2 0$ and 
$\coef(T_2^M((p_i)_{i \in \mathbb{K}}), 2^{j+1}-2) \equiv_2 0$.
Therefore we have $\coef(T^M((p_i)_{i \in \mathbb{K}}), 2^{j+1}-2) \equiv_2 0$.
Finally,  we suppose that there exists $i \in \mathbb{K}$ such that $T = \overline{p_i}(T_1)$.
This means $p = x^{2^{i+1} - 2} \circ T_1^M((p_i)_{i \in \mathbb{K}})$.
 Since $M \subseteq \Z[x^2]$ and $T_1^M((p_i)_{i \in \mathbb{K}}) \in M$,
 all monomials of $T_1^M((p_i)_{i \in \mathbb{K}})$ have even exponents, and
 therefore, there are $n \in \N$ and $c_0, \ldots, c_n \in \Z$ 
such that $T_1^M((p_i)_{i \in \mathbb{K}}) = \sum_{i=0}^{n} c_i x^{2i}$.
By the multinomial theorem we get
\[
    \left( \SUM{j=0}{n}{c_j x^j} \right)^{2^{i+1} - 2} = 
    \SUM{{ \begin{array}{c} (k_0, \ldots, k_n) \in \N_0^{n+1} \\k_0 + \cdots + k_n = 2^{i+1} - 2
         \end{array}}}{}{ \binom{2^{i+1} - 2 }{k_0, \ldots, k_n} \prod\limits_{r=0 }^{n} c_r^{k_r}x^{2 r k_r}   }.
 \]
    A summand of the right hand side that contributes to the coefficient
    of $2^{j+1}-2$ comes from a $(k_0, \ldots, k_n)$ such that
    $\sum_{r=0}^n 2 r  k_r = 2^{j+1} - 2$. By Lemma~\ref{lem:2divmult} (with
    $\tilde{k} := 2^{i+1} - 2$),  for such a $(k_0, \ldots, k_n)$,
      the multinomial coefficient $\binom{2^{i+1} - 2 }{k_0, \ldots, k_n}$
      is even.
 Thus
\[
  \coef((\overline{p_i} (T_1))^M((p_i)_{i \in \mathbb{K}}), 2^{j+1}-2) \equiv_2 0.
\]
This concludes the induction step.
Hence the coefficient of $x^{2^{j+1}-2}$ in
$p  = {T_p}^M((p_i)_{i \in \mathbb{K}})$ is even.
Therefore,
$p_j = 1 x^{2^{j+1}-2} \not\in N^*$.

\eqref{it:t2} The ``if''-direction is obvious. For the ``only if''-direction,
we assume $A \not\subseteq B$, and we let $j \in A$ such that $j \not\in B$.
Then $p_j \in \Phi (A)$ and by item~\eqref{it:t1},
$p_j \not\in \Phi (\N \setminus \{j\})$. Since
$\Phi (B) \subseteq \Phi (\N \setminus \{j\})$, we obtain $p_j \not\in \Phi(B)$,
and therefore $\Phi(A) \not\subseteq \Phi(B)$.

\eqref{it:t3} By~\eqref{it:t2}, the image of item~\eqref{it:t3}  $\Phi$ is order isomorphic
to $(P(\N), \subseteq)$.

\end{proof}
    As a consequence, the set of subnearrings of $\algop{\Z[x]}{+,\circ}$
    contains
    subsets of each of the following order types:
   \emph{infinite ascending chains}, i.e., subsets order isomorphic
      to $\algop{\N}{\le}$,
   \emph{infinite descending chains}, i.e., subsets order isomorphic
      to $\algop{\{z \in \Z \mid z < 0\}}{\le}$,
    \emph{uncountable linearly ordered subsets that are order isomorphic
      to $\algop{\R}{\le}$}, and 
    \emph{uncountable antichains}, i.e., subsets order isomorphic
      to $\algop{\R}{=}$.
      As another consequence, the nearring $\Phi (\N)$, which is the nearring
      generated by $\{p_i \mid i \in \N\}$, is not finitely
generated.
However, the entire nearring $\Z[x]$  is finitely generated,
with generators $\{1, x, x^2, x^3\}$ \cite[Corollary~4.3]{Ai:GPUF}.
An example of a descending chain of finitely generated subnearrings
of $\algop{\Z[x]}{+, \circ}$ is provided by $(N_i)_{i \in \N}$,
where $N_i:= \langle \{ x^{2^{2^i}} \} \rangle$.
Then for each $i \in \N$,
$x^{2^{2^{i+1}}}
x^{2^{2^i+2^i}} =
x^{2^{2^i} \cdot 2^{2^i}} =
x^{2^{2^i}} \circ x^{2^{2^i}} \in N_i$, and therefore $N_{i+1} \subseteq N_i$.
The inclusion is proper because
$N_{i+1}$ does not contain any nonzero polynomial $p \in \Z[x]$ with $\deg(p) < 2^{2^{i+1}}$,
and thus $x^{2^{2^i}} \not \in N_{i+1}$.

\section{Examples of subnearrings of $\algop{\Z[x]}{+, \circ}$} \label{sec:Examples}

In this section we describe all subnearrings of $\algop{\Z[x]}{+, \circ}$
which are generated by an arbitrary subset of $\{1, x, x^2, x^3\}$.
Let $ i, j \in \N$ and $y_1, \ldots , y_j \in \N_0$. 
We denote the set ${\{ x \in \N_0 \; \big\vert \; \exists k \in \{1, \ldots, j \}: \; x \equiv_{i} y_k   \} }$
by $\eqclass{i}{y_1, \ldots , y_j}$ and the digit sum of the number $a$ in base $b$ by $s_b(a)$.

For the remainder of this section let $A, B, C, D$ defined by
\begin{equation*}
\begin{aligned}
 &A:= \eqclass{24}{15, 21}, \\
 &B:= { \eqclass{72}{3, 33 , 45, 51, 57, 63} \setminus \{3\} }, \\
 &C:= \eqclass{8}{5, 7} \text{ and } \\
 &D:= { \eqclass{24}{1, 11 , 15, 17, 19, 21} \setminus \{1\} }.
 \end{aligned}
\end{equation*}

The following table shows all possible combinations of subnearrings of $\algop{\Z[x]}{+, \circ}$, 
which are generated by a subset of $\{1, x, x^2, x^3\}$. 
\newpage
\begin{table}[h]
\centering
\label{allPossTab}
\begin{tabular}{@{} c c c c | l | l @{}}

  $a_0$  & $a_1$ & $a_2$ & $a_3$ & \begin{tabular}[c]{@{}l@{}} $p(x) = \sum_{i=0}^n c_i x^i \in \Z[x]$ is an \\ element of the nearring generated by \\ $\{a_ix^i \mid i\in \{0, \ldots , 3\} \}$  iff for all $i \in \N_0$:   \end{tabular}                        	                                & Content of \\
\hline
\hline
  0  &  0  &   0   &   0   & $c_i = 0$                                           																				& \\
  1  &  0  &   0   &   0   & $i>0 \Rightarrow c_i = 0 $                        																						& \\
  0  &  1  &   0   &   0   & $c_0=0$ and $(i>1 \Rightarrow c_i =0)$                          																			& \\
  1  &  1  &   0   &   0   & $i>1 \Rightarrow c_i =0 $           																								&  \\
\hline
  0  &  0  &   1   &   0   & \begin{tabular}[c]{@{}l@{}} $c_0 = 0$, $c_{2i+1} =0$, \\    $ i>0 \Rightarrow 2^{s_2(i) - 1} \mid c_{2i}$  \end{tabular}                                             							& Theorem \ref{thm:x2}\\
\hline 
  1  &  0  &   1   &   0   & \begin{tabular}[c]{@{}l@{}} $c_{2i+1} = 0$, \\  $i > 0 \Rightarrow 2^{s_2(i) - 1} \mid c_{2i}$     \end{tabular}                                                              							& Theorem \ref{thm:1x2}\\
\hline
  0  &  1  &   1   &   0   &  \begin{tabular}[c]{@{}l@{}} $c_{0} = 0$, \\    $i > 0 \Rightarrow 2^{s_2(i) - 1} \mid c_{i}$     \end{tabular}                                                                							& Theorem 1.1 of~\cite{Ai:GPUF} \\
\hline 
  1  &  1  &   1   &   0   &  $i > 0 \Rightarrow 2^{s_2(i) - 1} \mid c_{i}$                                                               															& Theorem 1.2 of~\cite{Ai:GPUF} \\
\hline
  0  &  0  &   0   &   1   & \begin{tabular}[c]{@{}l@{}} $i \not\in \eqclass{6}{3}\Rightarrow c_i = 0$, \\ $i\in \eqclass{6}{3} \Rightarrow 3^{ \frac{s_3(i-1)}{2} } \mid c_{i}$, \\ $2 \mid \sum_{j \in A} c_j $ and $2 \mid \sum_{j \in B} c_j  $ \end{tabular}       & Theorem \ref{thm:x3}\\
\hline
  1  &  0  &   0   &   1   &  \begin{tabular}[c]{@{}l@{}} $i \not\in \eqclass{3}{0}\Rightarrow c_i = 0$, \\ $i\in \eqclass{3}{0}\Rightarrow 3^{ \lfloor \frac{s_3(i)}{2} \rfloor } \mid c_{i}$ \end{tabular}                                                        	& Theorem \ref{thm:1x3}\\
\hline
  0  &  1  &   0   &   1   & \begin{tabular}[c]{@{}l@{}} $i \not\in \eqclass{2}{1}\Rightarrow c_{2i+1} = 0$, \\ $i\in \eqclass{2}{1}\Rightarrow 3^{ \frac{s_3(i-1)}{2} } \mid c_{i}$, \\ $2 \mid \sum_{j \in C} c_j $ and $2 \mid \sum_{j \in D} c_j  $ \end{tabular}  & Theorem \ref{thm:xx3}\\
\hline
  1  &  1  &   0   &   1   &  $3^{ \lfloor \frac{s_3(i)}{2} \rfloor } \mid c_{i}$                                                           													& Theorem 1.3 of~\cite{Ai:GPUF}\\
\hline
  0  &  0  &   1   &   1   &   $c_0 = 0$, $c_1 =0$, $2 \mid c_5$                                                             														& Theorem \ref{thm:x2x3}\\
  1  &  0  &   1   &   1   &   $c_1 = 0 $, $2 \mid c_5$                                                            																& Theorem \ref{thm:1x2x3}\\
  0  &  1  &   1   &   1   &  $c_0 =0$                                                             																		& Theorem \ref{thm:xx2x3}\\
  1  &  1  &   1   &   1   &      $0=0$                                     																					& Theorem 4.3 of~\cite{Ai:GPUF}\\
\end{tabular}

\caption{All possible subnearrings generated by $1$, $x$, $x^2$ and $x^3$}
\end{table}

\subsection{The subnearrings generated by $ \{x^2, x^3 \}$ and $\{x, x^2, x^3 \}$ }

\begin{thm} \label{thm:x2x3}
A polynomial $p = \sum_{i=0}^n c_ix^i \in \Z[x]$ lies in the subnearring of $\algop{\Z[x]}{+, \circ} $ that is generated 
by $\{ x^2, x^3 \}$ if and only if $c_0 = 0, c_1 = 0$, and $c_5 \equiv_2 0$.
\end{thm}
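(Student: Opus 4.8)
The plan is to apply Lemma \ref{lem:genLem1} with $F = \{x^2, x^3\}$ and the candidate set
\[
  M := \Bigl\{\, p = \textstyle\sum_{i} c_i x^i \in \Z[x] \;\Big|\; c_0 = 0,\ c_1 = 0,\ c_5 \equiv_2 0 \,\Bigr\}.
\]
Conditions (1) and (3) of the lemma are immediate: $x^2, x^3 \in M$, and $M$ is closed under $+$ and $-$ since each defining condition is linear in the coefficients. Condition (4), $F \circ M \subseteq M$, is a short computation: if $q \in M$ then $q$ has no terms of degree $< 2$, so $x^2 \circ q = q^2$ has no terms of degree $< 4$ and $x^3 \circ q = q^3$ has no terms of degree $< 6$; in particular both have vanishing constant and linear coefficients, the coefficient of $x^5$ in $q^3$ is $0$, and the coefficient of $x^5$ in $q^2$ equals $2\,\coef(q,2)\coef(q,3)$, which is even. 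Hence $q^2, q^3 \in M$. Together with the lemma, this closure computation also yields the ``only if'' direction $\langle F \rangle \subseteq M$.

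The whole difficulty is therefore condition (2), the inclusion $M \subseteq \langle F \rangle$. Writing $N := \langle F \rangle$ and using that $N$ is an additive subgroup, it suffices to prove that $x^n \in N$ for every $n \ge 2$ with $n \neq 5$, together with $2x^5 \in N$; every element of $M$ is then an integer combination of these monomials (splitting off the $x^5$-term as $\tfrac{c_5}{2}\cdot 2x^5$). The basic tools are: composition, which gives $x^m \in N$ for every $3$-smooth $m \ge 2$ (i.e.\ $m = 2^i 3^j$); the squaring identity $2fg = x^2\circ(f+g) - x^2\circ f - x^2\circ g$, valid for $f,g \in N$, which yields $2x^5 = (x^2+x^3)^2 - x^4 - x^6$ and more generally $2x^{a+b} \in N$ whenever $x^a, x^b \in N$; and the cubing identity $x^3\circ(f+g) - f^3 - g^3 = 3f^2 g + 3 f g^2$, which for $f = x^a$, $g = x^b$ gives $3x^{2a+b} + 3x^{a+2b} \in N$. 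Since $\gcd(2,3)=1$, the strategy for a single monomial is to produce both $2x^n$ and $3x^n$ in $N$ and then set $x^n = 3x^n - 2x^n$.

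The main obstacle is extracting $3x^n$ cleanly from the cubing identity. For $f=x^a$, $g=x^b$ the cube is supported on $3a, 2a+b, a+2b, 3b$; to isolate the coefficient-$3$ term at $n \in \{2a+b,\, a+2b\}$ one must subtract the other three monomials, and while $x^{3a}, x^{3b}$ are harmless when $a,b$ are $3$-smooth, the ``partner'' monomial $x^{a+2b}$ (resp.\ $x^{2a+b}$) must already lie in $N$. Because the top exponent $3\max(a,b)$ always exceeds $n$, the cube unavoidably overshoots, and the partner can itself exceed $n$ and fail to be $3$-smooth. Consequently a naive strong induction on $n$ breaks down: for instance $x^{13}$ is most easily obtained from $(x^2+x^9)^3 = x^6 + 3x^{13} + 3x^{20} + x^{27}$, which first requires $x^{20} \in N$ -- a \emph{larger} exponent. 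I expect this to be the crux of the argument. The fix is to prove $2x^n \in N$ for all $n \ge 4$ and $3x^n \in N$ for all $n \ge 6$ by an induction that is not monotone in $n$: each required partner exponent is shown to be connected, through a finite chain of cube-steps, to $3$-smooth exponents (e.g.\ $16 \to 20 \to 13$), where the statement is immediate. The remaining work is the elementary but fiddly number-theoretic bookkeeping showing that every $n$ admits a decomposition $n = 2a+b$ with $x^a, x^b \in N$ and a partner that is either smaller or $3$-smooth, after dispatching a bounded list of small exponents by explicit computation.
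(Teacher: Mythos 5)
Your framework is exactly the paper's: Lemma~\ref{lem:genLem1} with the same $M$, the same closure computation for $F\circ M\subseteq M$ (the coefficient of $x^5$ in $q^2$ being $2\,\coef(q,2)\coef(q,3)$ and in $q^3$ being $0$), and the same strategy of producing both $2x^n$ and $3x^n$ from the squaring and cubing identities and subtracting. But the part you yourself identify as the crux --- showing that every $n$ admits a decomposition whose ``partner'' exponent is under control, and that your proposed non-monotone chains of cube-steps actually terminate --- is precisely what is not proved. A claim like ``each required partner exponent is connected, through a finite chain of cube-steps, to $3$-smooth exponents'' needs a well-foundedness argument, and none is given; as stated, the proposal is a plan for a proof rather than a proof.

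The gap closes with one concrete choice that you missed: in $n=2j_1+j_2$ take $j_2\in\{2,3\}$ according to the parity of $n$ and $j_1=(n-j_2)/2$. Then $x^3\circ(x^{j_1}+x^{j_2})-x^{3j_1}-x^{3j_2}-3x^{j_1+2j_2}=3x^{2j_1+j_2}$ and $x^2\circ(x^{2j_1}+x^{j_2})-x^{4j_1}-x^{2j_2}=2x^{2j_1+j_2}$, and the only nontrivial partner is $j_1+2j_2\le j_1+6\le 2j_1<n$ once $j_1\ge 6$, i.e.\ once $n\ge 14$. So a plain strong induction ``$x^k\in\langle F\rangle$ for all $6\le k<n$'' goes through for $n\ge 14$, with no chains and no overshoot. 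What remains is a finite list of base cases $6\le n\le 13$ (plus $x^4$ and $2x^5$), which must be dispatched by explicit identities; these are not all routine --- the paper's treatment of $n=13$ first manufactures the auxiliary element $3x^{17}\in\langle F\rangle$, which is where the non-monotonicity you worried about genuinely appears, but only in finitely many small cases. Until the decomposition argument and these base cases are written out, the inclusion $M\subseteq\langle F\rangle$ is not established.
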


\begin{proof}
 We want to use Lemma \ref{lem:genLem1}.
 Let $F := \{x^2, x^3\}$ and $$M := \{\sum_{i=0}^n c_ix^i \mid c_0=0, c_1 = 0, 2 \text{ divides } c_5 \}.$$
 The conditions $F \subseteq M$ and the closedness of $M$ under $+, -$ are obvious.
 Now we check: $F \circ M \subseteq M$.
 Let $p =  \sum_{i=0}^n c_ix^i \in M$.
 We have to show that $x^2 \circ p$ and $x^3 \circ p $ lie in $M$.
 Let $a_1, \ldots, a_{2n} \in \Z$ such that 
 $ x^2 \circ p = \sum_{i=0}^{2n} a_ix^i $ 
 and
 $b_1, \ldots, b_{3n} \in \Z$ such that  
  $ x^3 \circ p = \sum_{i=0}^{3n} b_ix^i. $ 
 Now it is sufficient to show that $a_5 \equiv_2 0$ and $ b_5 \equiv_2$.
 Let $j_1, j_2 \in \N\setminus\{ 1 \}$ such that $j_1+j_2 = 5$.
 5 is odd and thus $a_5 = 2c_{j_1+j_2}.$
 Therefore, $a_5 \equiv_2 0$.
 If $j_1, j_2, j_3 \in \N \setminus\{1\}$ then $j_1+j_2+j_3 \neq 5.$
 Therefore, $b_5 = 0$.
 
 What is left to show is that $M \subseteq \langle F \rangle$.
 Since $x^2 \in \langle F \rangle$, $x^2 \circ x^2 = x^4 \in \langle F \rangle$.
 We know that $x^3 \in \langle F \rangle$, and thus
 \begin{equation*}
  x^2 \circ (x^2 + x^3) - x^4 - x^2 \circ x^3  = 2x^5 \in \langle F \rangle.
 \end{equation*}
 Now we show by induction that for all $i \geq 6$, $x^i \in \langle F \rangle$.
 For $i = 6$, $x^2 \circ x^3 =x^6 \in \langle F \rangle$.
 For $ i = 8$, $x^2 \circ x^2 \circ x^2 = x^8 \in \langle F \rangle$.
 For $i=7$,
 \begin{equation*}
 \begin{aligned}
 x^2 \circ (x^2 +2 x^5) - x^4 &- x^2 \circ (2x^5)  \\ &- (x^3 \circ (x^2 +x^3) -x^6 - x^3 \circ x^3 -3x^8) = x^7 \in \langle F \rangle.
 \end{aligned}
 \end{equation*}
 For $i=9$, $x^3 \circ x^3 = x^9 \in \langle F \rangle. $
 For $i=10$, 
 \begin{equation*}
  \begin{aligned}
   x^2 \circ (2x^5) - ( x^3 \circ (x^2 + x^4) -x^4 -3 x^8 - x^3 \circ x^4 ) = x^{10} \in \langle F \rangle.
  \end{aligned}
 \end{equation*}
 For $i=11$,
 \begin{equation*}
  \begin{aligned}
   x^3 \circ (x^3 +x^4) - x^9 &-3x^{10}-x^3 \circ x^4 \\&-(x^2 \circ (x^7+x^4) -x^2 \circ x^7 -x^8) = x^{11} \in \langle F \rangle.
  \end{aligned}
 \end{equation*}
 For $i = 12$, $ x^3 \circ x^4 =x^{12} \in \langle F \rangle$.
 For $ i =13$, we first show that $3x^{17} \in \langle F \rangle$.
 This is true since
 $$ x^3 \circ (x^4+x^9) -x^{12} -3 \cdot (x^2 \circ x^{11}) -x^3 \circ x^9 = 3x^{17} \in \langle F \rangle.$$
 Now we have that
 \begin{equation*}
  \begin{aligned}
   x^3 \circ (x^3 + x^7) -x^9 &-3x^{17} -x^3 \circ x^7 \\&-(x^2 \circ (x^2 +x^{11}) -x^4 -x^2 \circ x^{11}) = x^{13} \in \langle F \rangle.  
  \end{aligned}
 \end{equation*}

 For the induction step we let $ i \geq 14$.
 If $ i$ is even, let $j_2 = 2 $, otherwise $j_2 =3$.
 Let $j_1 := \frac{i-j_2}{2}$, then $i = 2j_1 +j_2$.
 Since $j_1 \geq 6$ the induction hypothesis yields that $x^{j_1}$, $x^{2j_1}$, $x^{j_2}$ and $x^{2j_2+j_1}$ lie in $\langle F \rangle$.
 Therefore we get
 \begin{equation}\label{eq:x2x3eq1}
 \begin{aligned}
   x^3 \circ (x^{j_1} +x^{j_2}) - x^3 \circ x^{j_2} -x^3 \circ x^{j_1} -3 x^{2j_2+j_1} = 3x^{2j_1+j_2} \in \langle F \rangle
 \end{aligned}
 \end{equation}
 and also
  \begin{equation}\label{eq:x2x3eq2}
 \begin{aligned}
   x^2 \circ (x^{2j_1} +x^{j_2}) - x^2 \circ x^{2j_1} -x^2 \circ x^{j_2} = 2x^{2j_1+j_2} \in \langle F \rangle.
 \end{aligned}
 \end{equation}

 Now we subtract \eqref{eq:x2x3eq2} from \eqref{eq:x2x3eq1} and get
 $$ x^{2j_1 +j_2} \in \langle F \rangle, $$
 which concludes the proof.
\end{proof}

\begin{thm} \label{thm:xx2x3}
A polynomial $p = \sum_{i=0}^n c_ix^i \in \Z[x]$ lies in the subnearring of $\algop{\Z[x]}{+, \circ} $ that is generated 
by $\{x, x^2, x^3 \}$ if and only if $c_0 = 0$.
\end{thm}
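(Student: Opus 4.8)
The plan is to apply Lemma~\ref{lem:genLem1} with $F := \{x, x^2, x^3\}$ and $M := \{\sum_{i=0}^n c_i x^i \in \Z[x] \mid c_0 = 0\}$, which establishes both directions of the equivalence at once: once we know $M = \langle F \rangle$, a polynomial lies in the generated subnearring exactly when its constant coefficient vanishes. Conditions~(1) and~(3) of the lemma are immediate, and condition~(4) holds because $x \circ p = p$, $x^2 \circ p = p^2$ and $x^3 \circ p = p^3$ all vanish at $0$ whenever $p(0) = 0$; so the only genuine content is condition~(2), namely $M \subseteq \langle F \rangle$. Since $\langle F \rangle$ is closed under $+$ and $-$ it is an additive subgroup of $\Z[x]$, and $M$ is precisely the $\Z$-span of $\{x^i \mid i \ge 1\}$; hence it suffices to prove that $x^i \in \langle F \rangle$ for every $i \ge 1$.

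For the monomials I would first harvest as much as possible from the already-proved Theorem~\ref{thm:x2x3}. Because $\{x^2, x^3\} \subseteq F$, we have $\langle \{x^2, x^3\} \rangle \subseteq \langle F \rangle$, and Theorem~\ref{thm:x2x3} guarantees that this smaller subnearring contains $x^2$, $x^3$, $x^4$ and every $x^i$ with $i \ge 6$. Together with $x = x^1 \in F$, this leaves $x^5$ as the single monomial still to be produced. Note that Theorem~\ref{thm:x2x3} only yields $2x^5$, reflecting the congruence condition $2 \mid c_5$ that should disappear once $x$ is added as a generator.

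The crux is therefore to upgrade $2x^5$ to $x^5$, and this is where the extra generator $x$ is used. Computing $x^3 \circ (x + x^2) = x^3 + 3x^4 + 3x^5 + x^6$ and subtracting the terms $x^3$, $3x^4$ and $x^6$ --- all of which lie in $\langle F \rangle$ by the previous paragraph --- yields $3x^5 \in \langle F \rangle$. Combining this with $2x^5 \in \langle \{x^2,x^3\}\rangle \subseteq \langle F \rangle$ gives $x^5 = 3x^5 - 2x^5 \in \langle F \rangle$. Thus $x^i \in \langle F \rangle$ for all $i \ge 1$, so $M \subseteq \langle F \rangle$, and Lemma~\ref{lem:genLem1} gives $M = \langle F \rangle$, as claimed.

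The only subtle point, and the step I expect to be the main obstacle, is the coefficient bookkeeping for $x^5$: everything else reduces to quoting Theorem~\ref{thm:x2x3}, but one must observe that the presence of $x$ permits the combination $x^3 \circ (x + x^2)$, producing a multiple of $x^5$ with \emph{odd} coefficient. This is exactly what is needed to break the parity obstruction inherited from the $\{x^2, x^3\}$ case, and it is the place where a small explicit computation is unavoidable.
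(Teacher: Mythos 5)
Your proposal is correct and follows essentially the same route as the paper: both obtain $3x^5$ from $x^3 \circ (x+x^2)$ minus the monomials $x^3, 3x^4, x^6$ already available, subtract the $2x^5$ supplied by Theorem~\ref{thm:x2x3}, and reduce everything else to that theorem together with $x \in F$. Your write-up merely makes explicit the Lemma~\ref{lem:genLem1} framing and the verification of $F \circ M \subseteq M$ that the paper leaves implicit.
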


\begin{proof}
 We have 
 \begin{equation*}
  \begin{aligned}   
x^3 \circ (x+x^2) - x^3 &- x^3 \circ x^2 - 3 \cdot x^2 \circ x^2 
\\&- (x^2 \circ (x^2 +x^3) -x^4 - x^2 \circ x^3) = x^5 \in  \langle \{x, x^2, x^3 \} \rangle.
 \end{aligned}
 \end{equation*}
 We are done, since $\{x^2, x^3\}$ is a subset of $\{x, x^2, x^3\}$ and by Theorem \ref{thm:x2x3} we know that
 for all $j \in \N\setminus\{1,5\}$, we have $x^j \in \langle \{ x^2, x^3\} \rangle$.  
\end{proof}

\subsection{The subnearring generated by $\{1, x^2, x^3 \}$}

\begin{thm} \label{thm:1x2x3}
A polynomial $p = \sum_{i=0}^n c_ix^i \in \Z[x]$ lies in the subnearring of $\algop{\Z[x]}{+, \circ} $ that is generated 
by $\{1, x^2, x^3 \}$ if and only if $ c_1 = 0$ and $2$ divides $c_5$.
\end{thm}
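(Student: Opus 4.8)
The plan is to apply Lemma~\ref{lem:genLem1} with $F := \{1, x^2, x^3\}$ and $M := \{\sum_{i=0}^n c_i x^i \mid c_1 = 0 \text{ and } 2 \text{ divides } c_5\}$. Conditions~(1) and~(3) of that lemma are immediate: each of $1, x^2, x^3$ has vanishing coefficient at $x^1$ and at $x^5$, so $F \subseteq M$, and the two defining conditions of $M$ (one equality, one divisibility) are clearly preserved under $+$ and $-$. The substantive points are the closure condition~(4), namely $F \circ M \subseteq M$, and the generation condition~(2), namely $M \subseteq \langle F \rangle$.

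For condition~(4) I first note that $1 \circ p = 1 \in M$ for every $p$, since $1$ is a constant element. For $p = \sum c_i x^i \in M$, I would write $x^2 \circ p = p^2 = \sum a_i x^i$ and $x^3 \circ p = p^3 = \sum b_i x^i$ and check the two relevant coefficients in each. The coefficient of $x^1$ in $p^2$ is $2 c_0 c_1$ and the coefficient of $x^1$ in $p^3$ is $3 c_0^2 c_1$, both zero because $c_1 = 0$, so $a_1 = b_1 = 0$. For the coefficient of $x^5$: in $p^2$ it equals $2(c_0 c_5 + c_1 c_4 + c_2 c_3)$, which is even; in $p^3$, the only compositions of $5$ into three nonnegative parts that avoid the index $1$ (on which $p$ vanishes) are $\{0,0,5\}$ and $\{0,2,3\}$, so $b_5 = 3 c_0^2 c_5 + 6 c_0 c_2 c_3$, which is even precisely because $2$ divides $c_5$. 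Hence $x^2 \circ p, x^3 \circ p \in M$, and condition~(4) holds.

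For condition~(2) I would exploit Theorem~\ref{thm:x2x3}. Since $\{x^2, x^3\} \subseteq F$, the subnearring $\langle \{x^2, x^3\} \rangle = \{\sum c_i x^i \mid c_0 = 0,\ c_1 = 0,\ 2 \text{ divides } c_5\}$ is contained in $\langle F \rangle$. Given an arbitrary $p = \sum c_i x^i \in M$, I decompose it as $p = c_0 \cdot 1 + (p - c_0)$. The polynomial $p - c_0$ has zero constant term while retaining $c_1 = 0$ and $2 \mid c_5$, so it lies in $\langle \{x^2, x^3\} \rangle \subseteq \langle F \rangle$; and $c_0 \cdot 1$ is an integer multiple of the generator $1 \in F$, hence lies in $\langle F \rangle$ because the latter is closed under $+$ and $-$. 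Thus $p \in \langle F \rangle$, which gives condition~(2). Lemma~\ref{lem:genLem1} then yields $M = \langle F \rangle$, which is exactly the assertion of the theorem.

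The main obstacle is the parity of $b_5$, the $x^5$-coefficient of $p^3$: this is the single place where the hypothesis $2 \mid c_5$ is genuinely needed, and it requires correctly identifying which compositions of $5$ survive once the $c_1$-terms drop out. Everything else is either routine coefficient bookkeeping or an immediate consequence of the already-established description of $\langle \{x^2, x^3\} \rangle$ in Theorem~\ref{thm:x2x3}; in particular, adjoining the constant $1$ only removes the restriction $c_0 = 0$ and otherwise leaves the membership conditions unchanged.
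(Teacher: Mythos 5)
Your proof is correct and follows essentially the same route as the paper: Lemma~\ref{lem:genLem1} with the same $F$ and $M$, reduction of $M \subseteq \langle F \rangle$ to Theorem~\ref{thm:x2x3} by splitting off the constant term, and a direct coefficient check for $F \circ M \subseteq M$. In fact your computation $b_5 = 3c_0^2c_5 + 6c_0c_2c_3$ is more precise than the paper's, which records only $b_5 = 6c_0c_2c_3$ and omits the term $3c_0^2c_5$ --- harmlessly, since that term is even exactly because $2 \mid c_5$, the point you correctly identify as the one place this hypothesis is needed.
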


\begin{proof}
 We want to use Lemma \ref{lem:genLem1}.
 Let $F := \{1, x^2, x^3\}$ and $$M := \{\sum_{i=0}^n c_ix^i \mid  c_1 = 0  \text{ and } 2 \text{ divides } c_5 \}.$$
 The conditions $F \subseteq M$ and the closedness of $M$ under $+, -$ are obvious.
 Now we show $M \subseteq \langle F \rangle$. 
 By Theorem \ref{thm:x2x3} we have
 $\{\sum_{i=1}^n c_ix^i \mid  c_1 = 0  \text{ and } 2 \text{ divides } c_5 \} \subseteq \langle \{ x^2, x^3 \} \rangle$.
 We are done, since $1 \in F$ and $\langle \{ x^2, x^3 \} \rangle \subseteq \langle \{1, x^2, x^3 \} \rangle.$ 
 
 Now we show $F \circ M \subseteq M$.
 Let $p =  \sum_{i=0}^n c_ix^i \in M$.
 We have to show that $x^2 \circ p$ and $x^3 \circ p $ lie in $M$.
 Let $a_1, \ldots, a_{2n} \in \Z$ such that 
 $ x^2 \circ p = \sum_{i=0}^{2n} a_ix^i $ 
 and
 $b_1, \ldots, b_{3n} \in \Z$ such that  
  $ x^3 \circ p = \sum_{i=0}^{3n} b_ix^i. $ 
 Now it is sufficient to show that $a_1=0$, $b_1 =0$, $a_5 \equiv_2 0$ and $ b_5 \equiv_2 0$.
 
 Since $p \in M$, $c_1 = 0$.
 Thus, $a_1 = 2 c_0 c_1 = 0$.
 We have $a_5 = 2 c_1 c_4 + 2 c_2 c_3 + 2 c_0 c_5$.
 Therefore $a_5 \equiv_2 0$.
 Furthermore, $b_1 = 3 c_0^2 c_1 = 0 $ since $c_1 = 0.$ 
 Since $c_1 = 0$,  $b_5 = 6 c_0 c_2 c_3$ and this is divisible by 2.
 \end{proof}

\subsection{The subnearrings generated by $\{ x^2 \}$ and $\{ 1, x^2\}$} \label{sec:x2}

\begin{thm} \label{thm:x2}
A polynomial $p = \sum_{i=0}^n c_ix^i \in \Z[x]$ lies in the subnearring of $\algop{\Z[x]}{+, \circ} $ that is generated 
by $\{ x^2 \}$ if and only if $c_0=0$ and for all $i \in \N$, $2^{s_2(i) - 1}$ divides $c_{2i}$, and $c_{2i-1} =0$.
\end{thm}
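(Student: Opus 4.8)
The plan is to bypass a direct, computational application of Lemma~\ref{lem:genLem1} and instead reduce the claim to the already established description of $\langle \{x, x^2\} \rangle$ (Theorem~1.1 of~\cite{Ai:GPUF}), exploiting that $x$ is a left identity of $\algop{\Z[x]}{+,\circ}$ and that $x^2$ is right cancellable.

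First I would record two preliminary facts. By the remark preceding Lemma~\ref{lem:genLem1}, $\langle \{x^2\} \rangle \subseteq \Z[x^2]$, so every element of $\langle \{x^2\} \rangle$ is a polynomial in $x^2$, that is, has vanishing constant term and vanishing odd-degree coefficients. Second, since $\deg(x^2) \ge 1$, the element $x^2$ is right cancellable; as $x$ is the (unique) left identity, Lemma~\ref{lem:genLem3} applies with $a := x^2$ and $e := x$ and yields
\[
  \langle \{x, x^2\} \rangle = \{\, n \in \Z[x] \mid n \circ x^2 \in \langle \{x^2\} \rangle \,\}.
\]

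The heart of the argument is to convert this characterization of $\langle \{x, x^2\} \rangle$ into one of $\langle \{x^2\} \rangle$. I would introduce the substitution map $\psi \colon \Z[x] \to \Z[x]$, $\psi(n) := n \circ x^2 = n(x^2)$, which is injective (once more because $x^2$ is right cancellable) and whose image is precisely $\Z[x^2]$. Then I would show $\langle \{x^2\} \rangle = \psi\bigl(\langle \{x, x^2\} \rangle\bigr)$. For the inclusion ``$\supseteq$'', every $n \in \langle \{x, x^2\} \rangle$ satisfies $\psi(n) = n \circ x^2 \in \langle \{x^2\} \rangle$ straight from the displayed equation. For ``$\subseteq$'', I would take $f \in \langle \{x^2\} \rangle \subseteq \Z[x^2]$, write $f = \psi(n)$ for the unique $n$ with $n(x^2) = f$, and invoke the displayed equation to conclude $n \in \langle \{x, x^2\} \rangle$, since $n \circ x^2 = f \in \langle \{x^2\} \rangle$.

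Finally, I would substitute the known description and re-index. Theorem~1.1 of~\cite{Ai:GPUF} says that $n = \sum_{i \ge 0} c_i x^i$ belongs to $\langle \{x, x^2\} \rangle$ exactly when $c_0 = 0$ and $2^{s_2(i)-1} \mid c_i$ for all $i \ge 1$. Applying $\psi$ sends such an $n$ to $\sum_{i \ge 1} c_i x^{2i}$, so $p = \sum_{i} c_i x^i$ lies in $\langle \{x^2\} \rangle$ if and only if $c_0 = 0$, every odd-degree coefficient vanishes, and $2^{s_2(i)-1} \mid c_{2i}$ for all $i \ge 1$ --- exactly the asserted condition. The only delicate point is the equality $\langle \{x^2\} \rangle = \psi(\langle \{x, x^2\} \rangle)$, which hinges on the containment $\langle \{x^2\} \rangle \subseteq \Z[x^2]$ (the image of $\psi$) together with the injectivity of $\psi$; a direct verification via Lemma~\ref{lem:genLem1} would instead force one to prove closure of the candidate set under $x^2 \circ (-)$ by Kummer-type carry counting for the $2$-adic valuation of multinomial coefficients, which this route avoids entirely.
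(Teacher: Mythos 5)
Your proposal is correct and follows essentially the same route as the paper: both use Lemma~\ref{lem:genLem3} with $a=x^2$, $e=x$ together with the containment $\langle \{x^2\}\rangle \subseteq \Z[x^2]$ to identify $\langle \{x^2\}\rangle$ with the image of $\langle \{x,x^2\}\rangle$ under $n \mapsto n(x^2)$, and then import Theorem~1.1 of~\cite{Ai:GPUF}. Your write-up merely makes the bijection $\psi$ and the two inclusions more explicit than the paper does.
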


\begin{proof} 
``$\Rightarrow$'':  
By Lemma \ref{lem:genLem3} we know that there exists $q = \sum_{i=0}^{\lfloor \frac{n}{2} \rfloor} \tilde{c}_ix^i  \in \langle \{x, x^2 \} \rangle$ 
such that $p = q(x^2)$.
Hence $p$ does not contain monomials of odd degree.
By Theorem 1.1 of~\cite{Ai:GPUF}, 
$\tilde{c}_0 = 0$ and for all $i \in \N, 2^{s_2(i) - 1}$ divides $\tilde{c}_{i}=c_{2i}$.
``$\Leftarrow$'': 
Let $q \in \Z[x]$ such that $q \circ x^2 = p$.
Then $q$ lies in $\langle \{x, x^2 \} \rangle$ by Theorem 1.1 of~\cite{Ai:GPUF}, because $s_2(2i) = s_2(i)$ for all $i \in \N_0$.
Therefore Lemma \ref{lem:genLem3} yields $p \in \langle \{x^2\} \rangle$.
\end{proof}

\begin{thm} \label{thm:1x2}
A polynomial $p = \sum_{i=0}^n c_ix^i \in \Z[x]$ lies in the subnearring of $\algop{\Z[x]}{+, \circ} $ that is generated 
by $\{1, x^2 \}$ if and only if for all $i \in \N$, $2^{s_2(i) - 1}$ divides $c_{2i}$, and $c_{2i-1} =0$.
\end{thm}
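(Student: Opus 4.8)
The plan is to mirror the proof of Theorem~\ref{thm:x2}, replacing Lemma~\ref{lem:genLem3} by the more general Lemma~\ref{lem:genLem2} with constant set $C := \{1\}$, and invoking Theorem~1.2 of~\cite{Ai:GPUF} in place of Theorem~1.1. Concretely, I would apply Lemma~\ref{lem:genLem2} with the right cancellable element $a := x^2$, the left identity $e := x$, and $C := \{1\}$; note that $1$ is a constant element of $\Z[x]$, so the hypotheses are met. With these choices $F := \langle \{1, x^2\} \rangle$ is exactly the nearring to be described, and $G := \langle \{1, x, x^2\} \rangle$ is the nearring characterized by Theorem~1.2 of~\cite{Ai:GPUF}. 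The lemma then yields the key relation
\[
  \langle \{1, x, x^2\} \rangle = \{ q \in \Z[x] \mid q \circ x^2 \in \langle \{1, x^2\} \rangle \}.
\]

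Next I would record that $\langle \{1, x^2\} \rangle \subseteq \Z[x^2]$, since both generators lie in the subnearring $\Z[x^2]$ of polynomials with only even exponents, which is closed under $+,-$ and under composition (composing even polynomials yields an even polynomial). Hence every $p \in \langle \{1, x^2\} \rangle$ has $c_{2i-1}=0$ for all $i \in \N$ and can be written uniquely as $p = q(x^2) = q \circ x^2$ with $q = \sum_i \tilde{c}_i x^i$, where $\tilde{c}_i = c_{2i}$. The equivalence is then bookkeeping. For ``$\Rightarrow$'', if $p \in \langle \{1, x^2\} \rangle$ then $p = q \circ x^2 \in F$, so the displayed relation gives $q \in \langle \{1, x, x^2\} \rangle$; Theorem~1.2 of~\cite{Ai:GPUF} forces $2^{s_2(i)-1} \mid \tilde{c}_i = c_{2i}$ for all $i \in \N$ and imposes no condition on $\tilde{c}_0 = c_0$. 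For ``$\Leftarrow$'', the hypotheses $c_{2i-1}=0$ place $p$ in $\Z[x^2]$, so $p = q \circ x^2$, and the conditions $2^{s_2(i)-1} \mid c_{2i} = \tilde{c}_i$ are precisely those of Theorem~1.2 of~\cite{Ai:GPUF}; thus $q \in \langle \{1, x, x^2\} \rangle$ and so $p = q \circ x^2 \in F$ by the displayed relation.

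I expect no serious obstacle, since the entire combinatorial difficulty (the $2^{s_2(i)-1}$ divisibility) is already contained in Theorem~1.2 of~\cite{Ai:GPUF}; the reduction via Lemma~\ref{lem:genLem2} is where all the care goes. The only points worth a sentence each are verifying that $1$ legitimately serves as the constant in $C$ and that the index translation $c_{2i} = \tilde{c}_i$ matches the digit-sum condition, for which it is convenient to note $s_2(2i) = s_2(i)$, so the divisibility reads the same whether indexed by the degree in $p$ or in $q$. The novelty relative to Theorem~\ref{thm:x2} is merely that, because $1 \in C$ rather than $C = \emptyset$, the constant coefficient $c_0$ is unconstrained, which is exactly the difference between the two statements.
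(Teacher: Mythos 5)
Your proposal is correct and follows essentially the same route as the paper: apply Lemma~\ref{lem:genLem2} with $a:=x^2$, $e:=x$, $C:=\{1\}$ to reduce to the nearring $\langle\{1,x,x^2\}\rangle$, then invoke Theorem~1.2 of~\cite{Ai:GPUF} together with $s_2(2i)=s_2(i)$. You even make explicit two points the paper leaves implicit (the containment $\langle\{1,x^2\}\rangle\subseteq\Z[x^2]$ and the index translation $\tilde{c}_i=c_{2i}$), so no changes are needed.
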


\begin{proof}
``$\Rightarrow$'':  
By Lemma \ref{lem:genLem2} we know that there exists $q \in \langle \{1, x, x^2 \} \rangle$, 
such that $p = q(x^2)$.
By Theorem 1.2 of~\cite{Ai:GPUF} it follows that
$c_{2i}$ is a multiple of $2^{s_2(i) - 1}$ for all $i \in \N_0$.
``$\Leftarrow$'': 
Let $q \in \Z[x]$ such that $q \circ x^2 = p$.
Then $q$ lies in $\langle \{1, x, x^2 \} \rangle$ by Theorem 1.2 of~\cite{Ai:GPUF} because $s_2(2i) = s_2(i)$ for all $i \in \N_0$.
Therefore Lemma \ref{lem:genLem2} yields $p \in \langle \{1, x^2\} \rangle$.
\end{proof}

\subsection{The subnearrings generated by $\{ x^3 \}$, $\{ x, x^3\}$ and $\{1, x^3\}$} \label{sec:x3}

The following definitions for $M$, $a(i)$ and $b(i)$ are just for this section. 
Let $A, B$ be the subsets of $\N_0$ defined at the beginning of section \ref{sec:Examples}.

\begin{de}
Let $M$ be the subset of $\Z[x]$ defined by
\begin{equation*}
 \begin{aligned}
  M := \bigg\{ \sum_{i=0}^n c_ix^i  \, \big\vert & \, n \in \N_0 , \, 
\forall i \in \N_0 \setminus \eqclass{6}{3}: c_i = 0, \\ & \forall i \in \eqclass{6}{3}:3 ^{\frac{s_3(i)-1}{2}} \mid c_i, \;
 2 \mid \SUM{j \in A}{}{c_j}, \, 2 \mid \SUM{ j \in B}{}{c_j} \bigg\} .
 \end{aligned}
\end{equation*}
\end{de}

\begin{de}
 Let $a(i)$ and $b(i)$ defined by
 $$ a(i):= \begin{cases} 
	      0 & \text{ if } i \not\in A, \\
	      3 & \text{ if } i \in A    
           \end{cases} $$         
and
$$ b(i) := \begin{cases} 
	      0 & \text{ if } i \not\in B, \\
	      3 & \text{ if } i \in B. 
           \end{cases}$$
\end{de}

\begin{lem} \label{lem:x3elemofF}
 Let $F := \{ x^3 \}$, $a \in \N$ and $i \in \N_0$. Then we have:
 \begin{enumerate}
  \item $x^{3^a} \in \langle F \rangle $. \label{lem:x3elemofF1}
  \item  If $p(x) \in \langle F \rangle $ then 
  $3p(x)x^{2 \cdot 3^a} + 3p(x)^2x^{3^a}$, $6p(x)x^{2 \cdot 3^a}$ and $6p(x)^2x^{3^a}$ lie in $\langle F \rangle$. \label{lem:x3elemofF2}
  \item The polynomials $3x^{21}+3x^{15} $, $3x^{57}+3x^{33}$, $3x^{45}+3x^{63}$, $6x^{15}$, $6x^{33}$, $6x^{45}$, $6x^{63}$ lie in $\langle F \rangle $. \label{lem:x3elemofF3}
 \end{enumerate}
\end{lem}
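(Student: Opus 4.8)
The plan is to establish the three items of Lemma~\ref{lem:x3elemofF} in order, since each later item builds on the earlier ones. For item~\eqref{lem:x3elemofF1}, I would argue by induction on $a$. The base case $a=1$ is clear because $x^3 = x^3 \in \langle F \rangle$ by definition of $F$. For the step, observe that $x^{3^{a+1}} = (x^3) \circ (x^{3^a}) = x^3 \circ x^{3^a}$, so if $x^{3^a} \in \langle F \rangle$ then applying the operation $\overline{x^3}$ (composition on the left with $x^3$, which preserves $\langle F \rangle$) yields $x^{3^{a+1}} \in \langle F \rangle$. Thus item~\eqref{lem:x3elemofF1} follows immediately from closure under composition by $x^3$.

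For item~\eqref{lem:x3elemofF2}, the idea is to extract the stated polynomials from suitable compositions of $x^3$ with sums of elements of $\langle F \rangle$, using the right distributive law and the multinomial expansion of a cube. Given $p(x) \in \langle F \rangle$, I would compute $x^3 \circ (p(x) + x^{3^a})$ and expand it as $(p(x) + x^{3^a})^3 = p(x)^3 + 3p(x)^2 x^{3^a} + 3 p(x) x^{2 \cdot 3^a} + x^{3 \cdot 3^a}$. Subtracting the individually available terms $x^3 \circ p(x) = p(x)^3$ and $x^3 \circ x^{3^a} = x^{3^{a+1}}$ (the latter in $\langle F \rangle$ by item~\eqref{lem:x3elemofF1}) leaves exactly $3p(x)^2 x^{3^a} + 3 p(x) x^{2 \cdot 3^a}$, establishing the first polynomial. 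The remaining two polynomials, $6p(x)x^{2\cdot 3^a}$ and $6p(x)^2 x^{3^a}$, should come from a second composition designed to isolate them: computing $x^3 \circ (2p(x) + x^{3^a})$ and subtracting the cube term $x^3 \circ (2p(x)) = 8p(x)^3$ together with $x^{3^{a+1}}$ gives $6 p(x)^2 x^{3^a} + 12 p(x) x^{2\cdot 3^a} = 6 p(x)^2 x^{3^a} + 2 \cdot (6 p(x) x^{2 \cdot 3^a})$, and combining this with twice the first polynomial lets me solve for $6p(x)x^{2\cdot 3^a}$ and then $6p(x)^2 x^{3^a}$ by addition and subtraction. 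The key point is that all these combinations stay inside the subnearring because $\langle F \rangle$ is closed under $+$, $-$, and left composition by $x^3$.

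For item~\eqref{lem:x3elemofF3}, I would apply item~\eqref{lem:x3elemofF2} with explicit small choices of $p$ and $a$. Since $15 = 3 \cdot 3 + 6 = 3^1 + 2\cdot 3$ and similar base-$3$ identities hold for the listed exponents, I expect that taking $p(x) = x^3$ (so $p(x)^2 = x^6$) with $a=1$ yields terms like $3 x^3 x^{6} + 3 x^6 x^3 = 6 x^9$; more generally I would tabulate which pairs $(p,a)$ produce each target exponent. For instance $3x^{21} + 3x^{15}$ should arise as $3 p(x) x^{2\cdot 3^a} + 3 p(x)^2 x^{3^a}$ for an appropriate monomial $p(x) = x^m$, giving exponents $m + 2\cdot 3^a$ and $2m + 3^a$; solving these against the target pairs $(21,15)$, $(57,33)$, $(45,63)$ fixes $m$ and $a$ in each case. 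The monomials $6x^{15}$, $6x^{33}$, $6x^{45}$, $6x^{63}$ follow directly from the $6p(x)x^{2\cdot 3^a}$ and $6p(x)^2 x^{3^a}$ parts of item~\eqref{lem:x3elemofF2} with the matching $(p,a)$.

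The main obstacle I anticipate is the bookkeeping in item~\eqref{lem:x3elemofF2}: one must verify that the particular linear combinations of compositions cancel the unwanted cube and cross terms exactly, leaving only the asserted polynomials with the correct integer coefficients $3$ and $6$. This is purely a matter of carefully expanding $(\alpha p(x) + x^{3^a})^3$ for $\alpha \in \{1,2\}$ and tracking the $9$ resulting cross terms, but it is easy to misplace a factor of $2$ or $3$. Once the algebraic identities of item~\eqref{lem:x3elemofF2} are pinned down, item~\eqref{lem:x3elemofF3} is a routine substitution of explicit exponents, and item~\eqref{lem:x3elemofF1} is a short induction, so the whole lemma reduces to getting the coefficient arithmetic in the cube expansions right.
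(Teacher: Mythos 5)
Your proof follows essentially the same route as the paper: the identity $x^3\circ(p(x)+x^{3^a}) - x^3\circ p(x) - x^{3^{a+1}} = 3p(x)x^{2\cdot 3^a}+3p(x)^2x^{3^a}$ is exactly the paper's key step for item~(2), and your choices $(p,a)=(x^3,2),(x^3,3),(x^9,3)$ recover precisely the paper's instances for item~(3). The only divergence is in extracting $6p(x)x^{2\cdot3^a}$ and $6p(x)^2x^{3^a}$: the paper substitutes $-p$ for $p$ and adds/subtracts the two resulting polynomials, whereas you use $2p$ and solve a $2\times 2$ system --- equally valid, though note that you swapped the coefficients in the expansion of $(2p(x)+x^{3^a})^3$ (the cross terms are $12p(x)^2x^{3^a}+6p(x)x^{2\cdot3^a}$), a slip that does not affect the solvability of your system over $\Z$.
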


\begin{proof}
The first item can be proved by induction on $a$.
For the second item we observe that 
\begin{equation*}
\begin{aligned}
x^3 \circ (p(x) + x^{3^a} ) - x^3 \circ (p(x)) - x^{3^{a+1}} &= p(x)^3 + 3p(x)x^{2 \cdot 3^a} + 3p(x)^2x^{3^a} + x^{3^{a+1}} 
\\ & \qquad- p(x)^3 -x^{3^{a+1}} \\ &= 3p(x)x^{2 \cdot 3^a} + 3p(x)^2x^{3^a}. 
\end{aligned}
\end{equation*}
If $p(x) \in \langle F \rangle$ we also have $-p(x) \in \langle F \rangle$.
Thus, $$ (3p(x)x^{2 \cdot 3^a} + 3p(x)^2x^{3^a}) - ( - 3p(x)x^{2 \cdot 3^a} + 3p(x)^2x^{3^a}) = 6p(x)x^{2 \cdot 3^a}\in \langle F \rangle.$$
Furthermore, $$3p(x)x^{2 \cdot 3^a} + 3p(x)^2x^{3^a} + (-3p(x)x^{2 \cdot 3^a} + 3p(x)^2x^{3^a}) = 6p(x)^2x^{ 3^a} \in \langle F \rangle.$$
For item \eqref{lem:x3elemofF3} we observe that setting $p(x):= x^3$, $a := 2$ in item \eqref{lem:x3elemofF2} yields that
$3x^{21}+3x^{15}$ and $6x^{15}$ lie in $ \langle F \rangle$.
Setting $p(x):= x^3$, $a := 3$, item \eqref{lem:x3elemofF2} yields that $3x^{57}+3x^{33}$ and $6x^{33}$ lie in $\langle F \rangle$.
Setting $p(x):= x^9$, $a := 3$, item \eqref{lem:x3elemofF2} yields that $3x^{63}+3x^{45}$, $6x^{45}$ and $6x^{63}$ lie in $\langle F \rangle$. 
\end{proof}

\begin{lem}\label{lem:help3lxi}
Let $F := \{x^3 \}$, $i, j \in \eqclass{6}{3}$, $l_1, l_2, l_3 \in \N_0$, and $o_1, o_2, o_3, o_4 \in \{0, 3\}$
such that
\begin{equation}\label{eq:helplem1}
 3^{l_1}x^j +3^{l_2}x^i + o_1 x^{15} + o_2  x^{33}
\end{equation}
and
\begin{equation}\label{eq:helplem2}
 3^{l_3} x^j + o_3  x^{15} + o_4 x^{33}
\end{equation}
lie in $\langle F \rangle.$
We define 
 $ o_5 := \begin{cases} 
	      0 & \text{ if }  o_1= o_3,  \\
	      3 & \text{ if } o_1 \neq o_3 
           \end{cases} $  
and 
 $ o_6 := \begin{cases} 
	      0 & \text{ if }  o_2 = o_4,  \\
	      3 & \text{ if } o_2 \neq o_4
           \end{cases}.$  
Then there exists $t \in \N$ such that
$$3^tx^i + o_5 x^{15} + o_6 x^{33} \in \langle F \rangle.$$
\end{lem}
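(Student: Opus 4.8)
The plan is to cancel the $x^{j}$-term by a suitable integer combination of the two given polynomials, and then to correct the coefficients of $x^{15}$ and $x^{33}$ using the elements $6x^{15}$ and $6x^{33}$ of $\langle F\rangle$ furnished by Lemma~\ref{lem:x3elemofF}\,\eqref{lem:x3elemofF3}. Since $\langle F\rangle$ is a subnearring, it is in particular a subgroup of $\algop{\Z[x]}{+}$, and hence closed under multiplication by an arbitrary integer. Writing $P_1$ for the polynomial in~\eqref{eq:helplem1} and $P_2$ for the polynomial in~\eqref{eq:helplem2}, I would first note that
\[
  Q := 3^{l_3}P_1 - 3^{l_1}P_2 \in \langle F\rangle,
\]
and that the two $x^{j}$-terms cancel, so that
\[
  Q = 3^{l_2+l_3}x^{i} + \bigl(3^{l_3}o_1 - 3^{l_1}o_3\bigr)x^{15} + \bigl(3^{l_3}o_2 - 3^{l_1}o_4\bigr)x^{33}.
\]

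The crucial step is to show that the two residual coefficients agree with $o_5$ and $o_6$ modulo $6$. This rests on the elementary observation that $3^{k}\equiv 3 \pmod 6$ for every $k\in\N$. I would establish $3^{l_3}o_1-3^{l_1}o_3\equiv o_5 \pmod 6$ by a short case distinction on $(o_1,o_3)\in\{0,3\}^2$: if $o_1=o_3$ then each term is either $0$ or $\equiv 3\pmod 6$, so their difference is $\equiv 0\equiv o_5$; and if $o_1\neq o_3$ then exactly one term survives modulo $6$ and contributes $\pm 3\equiv 3\equiv o_5$. The identical argument gives $3^{l_3}o_2-3^{l_1}o_4\equiv o_6\pmod 6$. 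This is exactly the point at which the definition of $o_5$ and $o_6$ as the ``exclusive or'' of the respective pairs is used.

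Finally, because $6x^{15}$ and $6x^{33}$ lie in $\langle F\rangle$ by Lemma~\ref{lem:x3elemofF}\,\eqref{lem:x3elemofF3} and $\langle F\rangle$ is closed under integer multiples, I can subtract appropriate multiples of $6x^{15}$ and $6x^{33}$ from $Q$ so as to bring the coefficients of $x^{15}$ and $x^{33}$ down to exactly $o_5$ and $o_6$, leaving the coefficient of $x^{i}$ untouched. Setting $t := l_2+l_3$ then yields $3^{t}x^{i}+o_5x^{15}+o_6x^{33}\in\langle F\rangle$, as required.

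The main obstacle is precisely the congruence bookkeeping in the middle step: one must verify that the way $o_5$ and $o_6$ are defined matches the residues of the surviving coefficients modulo $6$, and that $6x^{15},6x^{33}$ supply exactly the adjustment freedom needed to realize those residues. A minor point to watch is that the computation presumes $j\notin\{15,33\}$, so that the listed summands are genuinely distinct (as holds in the intended applications), and that the claimed index $t=l_2+l_3$ is read in $\N_0$, or is positive in the situations where the lemma is invoked.
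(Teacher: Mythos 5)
Your proof is correct and follows essentially the same route as the paper's: cancel the $x^{j}$-term by a power-of-$3$ linear combination of the two given polynomials, then adjust the $x^{15}$- and $x^{33}$-coefficients modulo $6$ using $6x^{15},6x^{33}\in\langle F\rangle$ from Lemma~\ref{lem:x3elemofF}\,\eqref{lem:x3elemofF3}. The only cosmetic difference is that the paper multiplies by $3^{c_1}$ and $3^{c_2}$ with $c_1,c_2\in\N$ (e.g.\ $c_1=l_3+1$, $c_2=l_1+1$), so the resulting exponent $t$ is guaranteed to lie in $\N$, which disposes of the caveat you raise about $t=l_2+l_3$ possibly being $0$.
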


\begin{proof}
A linear combination of \eqref{eq:helplem1} and \eqref{eq:helplem2} yields that there exist $t \in \N$, $c_1, c_2\in \N$ such that 
$$3^tx^i + (3^{c_1} o_1 - 3^{c_2} o_3 ) x^{15} + (3^{c_1} o_2 - 3^{c_2} o_4) x^{33} \in \langle F \rangle.$$
By item \ref{lem:x3elemofF3} of Lemma \ref{lem:x3elemofF}, 
$6x^{15}$ and $6x^{33}$ lie in $\langle F \rangle$.
Since $3^{c_1} o_1 - 3^{c_2} o_3  \equiv_6 o_5$ and $3^{c_1} o_2 - 3^{c_2} o_4 \equiv_6 o_6$ we have
$$3^tx^i + o_5 x^{15} + o_6 x^{33} \in \langle F \rangle.$$
\end{proof}

\begin{lem}\label{lem:3lxiax15bx33inF}
Let $F := \{x^3 \}$. 
For all $ i \in \eqclass{6}{3} $ there exists $l \in \N_0$ such that $$3^{l} x^i + a(i)x^{15} + b(i)x^{33} \in \langle F \rangle .$$
\end{lem}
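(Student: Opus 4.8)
The plan is to argue by strong induction on $i \in \eqclass{6}{3}$, exploiting the fact that the operation in Lemma~\ref{lem:x3elemofF}~\eqref{lem:x3elemofF2} expresses $x^i$ through strictly smaller exponents. Concretely, if $p(x) = 3^{l'} x^{m} + a(m) x^{15} + b(m) x^{33} \in \langle F \rangle$ is the element furnished by the induction hypothesis for some $m < i$, then $3 p(x) x^{2 \cdot 3^a} + 3 p(x)^2 x^{3^a} \in \langle F \rangle$, and its two dominant monomials sit at the exponents $m + 2\cdot 3^a$ and $2m + 3^a$, while the remaining monomials sit at the fixed positions $15 + 2\cdot 3^a$, $33 + 2\cdot 3^a$, $m+15+3^a$, $m+33+3^a$, $30+3^a$, $48+3^a$, $66+3^a$. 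Writing the target $i$ as one of the two dominant exponents (say $i = 2m + 3^a$, i.e. $m := (i-3^a)/2$) and choosing the least $a \in \N$ for which $m \equiv_6 3$, one checks that the partner exponent $j := m + 2\cdot 3^a$ as well as every auxiliary exponent listed above is strictly smaller than $i$; this forces a finite, explicit list of base cases (roughly $i \le 99$, where $3^{a+1} < i$ fails).

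First I would dispose of the base cases. The relation $x^{3^a} \in \langle F\rangle$ from Lemma~\ref{lem:x3elemofF}~\eqref{lem:x3elemofF1} settles the powers of three $i \in \{3, 9, 27, 81, \ldots\}$ with $l = 0$ and no corrections (one checks $a(3^a) = b(3^a) = 0$). The polynomials exhibited in Lemma~\ref{lem:x3elemofF}~\eqref{lem:x3elemofF3}, together with $6x^{15}, 6x^{33} \in \langle F\rangle$, settle $i \in \{15, 21, 33, 45, 57, 63\}$: the prescribed values $a(i), b(i)$ are read off from the definitions of $A$ and $B$, and residual coefficients on $x^{15}, x^{33}$ are corrected modulo $6$. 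The remaining small values such as $i = 39, 51$ are treated exactly as in the induction step below, but referring only to these base elements.

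For the induction step, having produced the relation above, I would rewrite every auxiliary monomial $x^{e}$ occurring in it (all with $e < i$ and $e \in \eqclass{6}{3}$) by its induction value $3^{l_e} x^{e} + a(e) x^{15} + b(e) x^{33}$, eliminating the monomials one at a time by taking $3$-power-scaled linear combinations and discarding multiples of $6x^{15}$ and $6x^{33}$ — precisely the manipulation packaged in Lemma~\ref{lem:help3lxi}, which trades a two-position relation for a one-position relation while updating the $x^{15}$- and $x^{33}$-coefficients to their residues in $\{0,3\}$. After also eliminating the partner term $x^{j}$ in this way, what survives is a relation $3^{t} x^{i} + o_5 x^{15} + o_6 x^{33} \in \langle F \rangle$ with $o_5, o_6 \in \{0,3\}$, so it only remains to verify $o_5 = a(i)$ and $o_6 = b(i)$.

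That last verification is the main obstacle. By Lemma~\ref{lem:help3lxi} the quantities $o_5, o_6$ arise as XOR-type parities (each equals $3$ exactly when an odd number of the contributing relations carries a $3$ on the corresponding position), so the task is to show that, tracking the $x^{15}$- and $x^{33}$-coefficients modulo $6$ through the entire reduction, the net residue on $x^{15}$ is nonzero exactly when $i \in A = \eqclass{24}{15,21}$, and the net residue on $x^{33}$ is nonzero exactly when $i \in B = \eqclass{72}{3,33,45,51,57,63}\setminus\{3\}$. This is a purely number-theoretic bookkeeping argument: one must check that the recursion $i \mapsto \{(i-3^a)/2,\ (i+3^{a+1})/2\}$ respects the residue of $i$ modulo $24$ (controlling the $x^{15}$-coefficient) and modulo $72$ (controlling the $x^{33}$-coefficient), so that $a$ and $b$ propagate consistently. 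I expect the matching of these parities to the residue classes defining $A$ and $B$ to be the delicate, computation-heavy part, whereas the algebraic generation of the relations is routine given the two preparatory lemmas.
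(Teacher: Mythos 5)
Your overall strategy is exactly the paper's: write $i = 2r + 3^a$ with $a \in \{1,2\}$ chosen according to $i \bmod 12$ so that $r \in \eqclass{6}{3}$, apply Lemma~\ref{lem:x3elemofF}~\eqref{lem:x3elemofF2} to the element furnished by the induction hypothesis for $r$, clear the auxiliary exponents (all of the form $r+6$, $r+18$, $r+36$ resp.\ $r+18$, $r+24$, $r+42$ plus fixed positions), and finish with Lemma~\ref{lem:help3lxi}. However, two steps that you defer are precisely where the work lies, and as stated your plan does not close them. First, the final identification $o_5 = a(i)$, $o_6 = b(i)$ is not something you can ``expect'' to hold; the paper establishes it by an explicit $2\times 12$ case analysis of $r \bmod 72$ (the two tables following \eqref{eq:impeq1} and \eqref{eq:impeq2}), tracking how $a(r)$, the correction terms $a(r)x^{15}+a(r)x^{33}$ produced in \eqref{eq:impeq1}/\eqref{eq:impeq2}, and the induction value at the partner exponent $r+6$ (resp.\ $r+18$) combine under the XOR rule of Lemma~\ref{lem:help3lxi}. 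Without carrying out that table (or an equivalent closed-form argument about the recursion modulo $72$), the lemma is not proved.

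Second, your base cases are genuinely gappy. The induction step needs the auxiliary exponents to be strictly below $i$, which forces $r > 33$, i.e.\ $i \geq 81$; so everything in $\eqclass{6}{3}$ up to $75$ must be a base case, and values like $39$ and $51$ cannot be ``treated exactly as in the induction step'' since for them the auxiliary exponents (e.g.\ $r+42 = 57$ when $i=39$) exceed $i$ and break the strong induction. Moreover, your recipe of correcting residual coefficients on $x^{15}$ and $x^{33}$ modulo $6$ does not handle the coupled pair $i \in \{45,63\}$: Lemma~\ref{lem:x3elemofF}~\eqref{lem:x3elemofF3} only gives $3x^{45}+3x^{63}$, whose residual term sits at $x^{63}$, not at $x^{15}$ or $x^{33}$, so neither of the two required elements $3^l x^{45}+3x^{15}+3x^{33}$ and $3^l x^{63}+3x^{15}+3x^{33}$ is obtained this way. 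The paper resolves the entire basis $i \leq 75$ by a direct computer calculation (the list \eqref{eq:indlist}), which supplies, e.g., $3x^{15}+3x^{33}+3x^{45}$ and $3x^{15}+3x^{33}+3x^{63}$ outright; you would need either that computation or a separate hand argument for these values.
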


\begin{proof}
For the induction basis we used Mathematica \cite{Pr:Mathe}. The program is openly available on \cite{Url:NearringGen}. 
We get the following list of polynomials which lie in $\langle F \rangle$ as a part of the output.
\begin{equation} \label{eq:indlist}
 \begin{aligned}
    \{ &0, 0, 0, x^3, 0, 0, 0, 0, 0, x^9, 0, 0, 0, 0, 0, 6x^{15}, 
0, 0, 0,  0, 0, \\ & 3x^{15} + 3x^{21}, 0, 0, 0, 0, 0, x^{27}, 0, 0, 
0, 0, 0,  6x^{33}, 0,  0, 0, \\ & 0, 0, 3x^{15} + 3x^{39}, 0, 0, 0, 0,
0,  3x^{15} + 3x^{33} + 3x^{45},  0, 0, \\ &0, 0, 0, 3x^{33} + 9x^{51},
0, 0, 0, 0, 0, 3x^{33} + 3x^{57},  0, 0, 0,  0, \\ & 0, 3x^{15} + 3x^{33} + 3x^{63}, 
0, 0, 0, 0, 0,  3x^{15} + 9x^{69}, 0, 0,  0, \\ & 0, 0, 3x^{33} + 9x^{75}, \ldots \} .
 \end{aligned}
\end{equation}
For the induction step, we let $i \geq 81$.
We consider the cases $i \in \eqclass{12}{9}$ and $i \in \eqclass{12}{3}$. \\
\textit{Case:} $i \in \eqclass{12}{9}$.
Let $ r := \frac{i-3}{2} $. 
Then $ r \in \eqclass{6}{3}$ and $i=2 r +3$.
Since $i \geq 81$, $r \geq 39$.
By the induction hypothesis there exists a $l_1 \in \N_0$ such that
$$q(x) := 3^{l_1}x^r + a(r) x^{15} + b(r) x^{33} \in \langle F \rangle .$$
By item \eqref{lem:x3elemofF2} of Lemma \ref{lem:x3elemofF} we have $3q(x)x^{2 \cdot 3} + 3q(x)^2x^{3} \in \langle F \rangle$.
Therefore, we have that
\begin{equation} \label{eq:firsteq3^l}
  \begin{aligned}
    3^{l_1+1} x^{r + 6 } &+3a(r)  x^{21 } +  3b(r)  x^{39}   +  3 a(r)^2 x^{33} +  2 \cdot 3a(r)b(r) x^{51} + 3 b(r)^2 x^{69} \\ &+ 3 \cdot 3^{2l_1} x^{2 r+3} + 
    2 \cdot 3^{l_1+1} a(r) x^{18 + r} + 2  \cdot 3^{l_1+1} b(r) x^{36 + r} \in \langle F \rangle. 
 \end{aligned}
\end{equation}
By \eqref{eq:indlist} we have $3 x^{33} + 9 x^{51} \in \langle F \rangle$ and $6x^{33} \in \langle F \rangle$.
Hence $ 2 \cdot (3 x^{33} + 9 x^{51} ) - 6x^{33} = 18 x^{51} \in \langle F \rangle$.
Since $b(r) = 0 \text{ or } b(r) =3$ this implies $6 a(r) b(r) x^{51} \in \langle F \rangle$.
By \eqref{eq:indlist},  $6x^{15}\in\langle F \rangle$, thus $2 \cdot b(r)x^{15}$ and $2\cdot b(r)^2x^{15} $ lie in $\langle F \rangle$.
By \eqref{eq:indlist} we also have that $3x^{15} + 3x^{39}$ and $9 x^{69} +3x^{15}$ lie in $\langle F \rangle$.
Since  $ 2\cdot  (9x^{69} +3x^{15}) -6x^{15} =18x^{69} \in \langle F \rangle$ we have $6b(r)^2 x^{69} \in \langle F \rangle.$ 
Hence,
\begin{equation*}
 \begin{aligned}
  b(r) &\cdot (3x^{15} + 3x^{39})  + b(r)^2 \cdot (9 x^{69} +3x^{15}) - 2 \cdot b(r)x^{15}  \\  &- 2 \cdot b(r)^2x^{15} -6b(r)^2 x^{69}  
  = 3b(r)x^{39}+ 3 b(r)^2 x^{69} + (b(r) +b(r)^2)x^{15} \in \langle F \rangle.
 \end{aligned}
\end{equation*}
Since  $b(r) = 0 \text{ or } b(r) =3$ and $6x^{15} \in \langle F \rangle$ we get $3b(r)x^{39}+ 3 b(r)^2 x^{69} \in \langle F \rangle$.
We also know by \eqref{eq:indlist} that $3x^{15}+3x^{21}\in \langle F \rangle $ and hence $a(r)\cdot(3x^{15}+3x^{21}) \in \langle F \rangle$.
Therefore, we get by subtracting $3b(r)x^{39}+ 3 b(r)^2 x^{69}$, $a(r)\cdot(3x^{15}+3x^{21})$ and $6 a(r) b(r) x^{51} $                       
from \eqref{eq:firsteq3^l},
\begin{equation*} 
\begin{aligned}
 3^{l_1+1}x^{r + 6 } &-3a(r)x^{15} + 3a(r)^2 x^{33} +3^{2l_1+1}x^{2r+3} \\ &+  2 \cdot 3^{l_1+1} a(r) x^{18 + r} + 2  \cdot 3^{l_1+1} b(r) x^{36 + r} \in \langle F \rangle . 
 \end{aligned}
\end{equation*}
Since $6x^{15}$ and $6x^{33}$ lie in $\langle F \rangle $ and $a(r) = 0$ or $a(r) = 3$ we also get
\begin{equation} \label{eq:firsteq3^l2}
\begin{aligned}
  3^{l_1+1}x^{r + 6 }  &+ a(r)x^{15} + a(r) x^{33} +3^{2l_1+1}x^{2r+3} \\ &+ 2 \cdot 3^{l_1+1} a(r) x^{18 + r} + 2  \cdot 3^{l_1+1} b(r) x^{36 + r} \in \langle F \rangle . 
  \end{aligned}
\end{equation}
Since $36 +r < 2 r + 3$ we know by the induction hypothesis that there exist $l_2, l_3 \in \N_0$ such that
$3^{l_2}x^{r+18} + a(r+18) x^{15} + b(r+18)x^{33}$ and $3^{l_3}x^{r+36} + a(r+36) x^{15} + b(r+36)x^{33}$ lie in $\langle F \rangle$.
Therefore,y \eqref{eq:indlist} 
\begin{equation*}\label{eq:firsteq3^l3}
 \begin{aligned}
  2 \cdot(3^{l_2}x^{r+18} + a(r+18) x^{15} + b(r+18)x^{33}) &- 2 \cdot a(r+18) x^{15} \\&- 2 \cdot b(r+18)x^{33}   =2 \cdot 3^{l_2}x^{r+18} \in \langle F \rangle
 \end{aligned}
\end{equation*}
and
\begin{equation*}\label{eq:firsteq3^l4}
 \begin{aligned}
  2 &\cdot(3^{l_3}x^{r+36} + a(r+36) x^{15} + b(r+36)x^{33}) - 2 \cdot a(r+36) x^{15} \\&- 2 \cdot b(r+36)x^{33} =  2 \cdot 3^{l_3}x^{r+36} \in \langle F \rangle.
 \end{aligned}
\end{equation*}
Let $L_1 := \lcm(3^{l_1+1}, 3^{l_2}, 3^{l_3})$.
Then there exists a $k_1 \in \N$ such that 
$3^{k_1}\cdot 3^{l_1+1} \geq L_1 $,   $3^{k_1}\cdot3^{l_2}\geq L_1$ and $3^{k_1}\cdot 3^{l_3}\geq L_1$.
By multiplying \eqref{eq:firsteq3^l2} with $3^{k_1}$ and subtracting $a(r) \cdot 3^{k_1} \cdot  2 \cdot 3^{l_1+1}x^{r+18} \in \langle F \rangle $ and  
$ b(r) \cdot 3^{k_1} \cdot 2 \cdot 3^{l_1+1}x^{r+36} \in \langle F \rangle$ we get
\begin{equation}\label{eq:1:3^k_1}
 3^{l_1+1+k_1}x^{r + 6 }  + 3^{k_1}a(r)x^{15} + 3^{k_1}a(r) x^{33} +3^{2l_1+1+k_1}x^{2r+3} \in \langle F \rangle .
\end{equation}
For all $k \in \N$ it holds that $3^ka(r) \equiv_6 a(r)$.
Since $6x^{15}$ and $6x^{33}$ lie in $\langle F \rangle$, we get 
\begin{equation} \label{eq:impeq1}
 3^{l_1+k_1+1}x^{r + 6 }  + 3^{2l_1 +k_1+1}x^{2r+3} + a(r)x^{15} + a(r) x^{33} \in \langle F \rangle . 
\end{equation}
Now we have to distinguish between some cases for $r$ according to their remainder modulo 72. 
Therefore, we consider the following table.
We take as an example the first row of the table and read it in the following way.
Let $r \in \eqclass{72}{9}$. 
Then $a(r)=0$ by the definition of $a(r)$.
We get that $r+6 \in \eqclass{72}{15}$ and therefore the induction hypothesis yields a $l_4 \in \N_0 $
such that $3^{l_4}x^{r+6} + 3x^{15} \in \langle F \rangle$.   
Now we use $\tilde{x}$ for the variable $x$ in the statement of Lemma \ref{lem:help3lxi}.
We set $\tilde{i} := 2r+3$, $\tilde{j}:= r+6$,
$\tilde{l_1}:= l_1+k_1+1$, $\tilde{l_2}:= 2l_1+k_1+1$, $\tilde{l_3}:=l_4$,
$\tilde{o_1}:= a(r) = 0$, $\tilde{o_2}:= a(r) = 0$,
$\tilde{o_3}:= 3$ and $\tilde{o_4}:= 0$.
By Lemma \ref{lem:help3lxi} we have that there exists a $\tilde{t} \in \N_0$ such that 
$3^{\tilde{t}}x^{2r +3} + 3x^{15} \in \langle F \rangle$ . 
Now we set $t:=\tilde{t}$ and thus $3^{t}x^{2r +3} + 3x^{15} \in \langle F \rangle$.
We have $i=2r+3 \in \eqclass{72}{21}$ and this satisfies our statement, since $a(i)=3$ and $b(i) = 0$. 
The other 11 cases work in a similar way.

\begin{table}[h]
\centering
\footnotesize
\setstretch{1.5}
\setlength{\tabcolsep}{2pt}
\label{firsttable}
\begin{tabular}{@{}|l||c|l|lll|lll|l|c|c|@{}}
\hline  
 \setstretch{0.7} \begin{tabular}[l]{@{}l@{}}  Case: \\ $r \in $ \end{tabular}    & $a(r)$        &  \setstretch{0.7}  \begin{tabular}[c]{@{}l@{}} Then \\ $r+6 \in $ \end{tabular}    &   \multicolumn{3}{l|}{  \setstretch{0.7} \begin{tabular}[l]{@{}l@{}} \\ Then the induction \\ hypothesis yields a \\ $l_4 \in \N_0$ such that \\$ \ldots \in \langle F \rangle$ \\ \\ \end{tabular} }   &     \multicolumn{3}{l|}{ \setstretch{0.7}  \begin{tabular}[l]{@{}l@{}} \\ Together with \eqref{eq:impeq1}, \\ Lemma \ref{lem:help3lxi} yields a \\ $t \in \N_0$ such that \\$ \ldots \in \langle F \rangle$ \\ \\ \end{tabular}}    &  $i \in$  &  $a(i)$ &  $b(i)$ \\ 
\hline
$\eqclass{72}{9}$                                              & 0             & $\eqclass{72}{15}$                                              & $3^{l_4}x^{r+6}$  & $+ 3x^{15}$     &                                                                                                                                              & $3^{t}x^{2r +3}$     &    $+ 3x^{15}$   &                                                                                                                                                         & $\eqclass{72}{21}$  & 3 & 0\\
$\eqclass{72}{15}$                                             & 3             & $\eqclass{72}{21}$                                              & $3^{l_4}x^{r+6}$  & $+ 3 x^{15}$    &                                                                                                                                              & $3^{t}x^{2r +3}$     &                  &  $+ 3x^{33}$                                                                                                                                            & $\eqclass{72}{33}$  & 0 & 3\\
$\eqclass{72}{21}$                                             & 3             & $\eqclass{72}{27}$                                              & $3^{l_4}x^{r+6}$  &                 &                                                                                                                                              & $3^{t}x^{2r +3}$     &    $+ 3x^{15}$   &  $+ 3x^{33}$                                                                                  							  & $\eqclass{72}{45}$  & 3 & 3\\
$\eqclass{72}{27}$                                             & 0             & $\eqclass{72}{33}$                                              & $3^{l_4}x^{r+6}$  &                 & $+ 3x^{33}$                                                                                                                                  & $3^{t}x^{2r +3}$     &                  &  $+ 3x^{33}$                                                                                                                                            & $\eqclass{72}{57}$  & 0 & 3\\
$\eqclass{72}{33}$                                             & 0             & $\eqclass{72}{39}$                                              & $3^{l_4}x^{r+6}$  & $+ 3x^{15}$     &                                                                                                                                              & $3^{t}x^{2r +3}$     &    $+ 3x^{15}$   &                                                                                                                                                         & $\eqclass{72}{69}$  & 3 & 0\\
$\eqclass{72}{39}$                                             & 3             & $\eqclass{72}{45}$                                              & $3^{l_4}x^{r+6}$  & $+ 3x^{15}$     & $+ 3x^{33}$                                                                                                                                  & $3^{t}x^{2r +3}$     &                  &                                                                                                                                                         & $\eqclass{72}{9}$   & 0 & 0\\
$\eqclass{72}{45}$                                             & 3             & $\eqclass{72}{51}$                                              & $3^{l_4}x^{r+6}$  &                 & $+ 3x^{33}$                                                                                                                                  & $3^{t}x^{2r +3}$     &    $+ 3x^{15}$   &                                                                                                                                                         & $\eqclass{72}{21}$  & 3 & 0\\
$\eqclass{72}{51}$                                             & 0             & $\eqclass{72}{57}$                                              & $3^{l_4}x^{r+6}$  &                 & $+ 3x^{33}$                                                                                                                                  & $3^{t}x^{2r +3}$     &                  &   $+ 3x^{33}$                                                                                                                                           & $\eqclass{72}{33}$  & 0 & 3\\
$\eqclass{72}{57}$                                             & 0             & $\eqclass{72}{63}$                                              & $3^{l_4}x^{r+6}$  & $+ 3x^{15}$     & $+ 3x^{33}$                                                                                                                                  & $3^{t}x^{2r +3}$     &    $+ 3x^{15}$   &   $+ 3x^{33}$                                                                                                                                           & $\eqclass{72}{45}$  & 3 & 3\\
$\eqclass{72}{63}$                                             & 3             & $\eqclass{72}{69}$                                              & $3^{l_4}x^{r+6}$  & $+ 3x^{15}$     &                                                                                                                                              & $3^{t}x^{2r +3}$     &                  &   $+ 3x^{33}$                                                                                                                                           & $\eqclass{72}{57}$  & 0 & 3\\
$\eqclass{72}{69}$                                             & 3             & $\eqclass{72}{3}$                                               & $3^{l_4}x^{r+6}$  &                 & $+ 3 x^{33} $                                                                                                                                & $3^{t}x^{2r +3}$     &    $+ 3 x^{15}$  &                                                                                                                                                         & $\eqclass{72}{69}$  & 3 & 0\\
$\eqclass{72}{3}$                                              & 0             & $\eqclass{72}{9}$                                               & $3^{l_4}x^{r+6}$  &                 &                                                                                                                                              & $3^{t}x^{2r +3}$     &                  &                                                                                                                                                         & $\eqclass{72}{9}$   & 0 & 0\\
\hline
\end{tabular}

\caption{\textit{Case:} $i \in \eqclass{12}{9}$. Distinction for $r \mod 72$ in $\eqclass{6}{3}$}
\end{table}

Hence in each case, there exists a $t \in \N_0$ such that 
$$3^{t} x^i + a(i)x^{15} + b(i)x^{33} \in \langle F \rangle .$$
\textit{Case:} $ i \in \eqclass{12}{3}$.
Let $r:= \frac{i-9}{2}$.
Then $ r \in \eqclass{6}{3}$ and $ i = 2 r + 9$. 
By the induction hypothesis, there exists a $l_1 \in \N_0$ such that
$$q(x) := 3^{l_1}x^r + a(r) x^{15} + b(r) x^{33} \in \langle F \rangle .$$
By item \eqref{lem:x3elemofF2} of Lemma \ref{lem:x3elemofF} we have $3q(x)x^{2 \cdot 9} + 3q(x)^2x^{9} \in \langle F \rangle.$
Therefore, we have that
\begin{equation} \label{eq:secondeq3^l}
 \begin{aligned} 
  3^{l_1+1} x^{r + 18 } &+ 3a(r)  x^{33 } +  3b(r)  x^{51}   +  3 a(r)^2 x^{39} +  2 \cdot 3a(r) b(r) x^{57} + 3 b(r)^2 x^{75} \\ &+ 3 \cdot 3^{2l_1} x^{2 r+9} + 
 2 \cdot 3^{l_1+1} a(r) x^{24+r} + 2  \cdot 3^{l_1+1} b(r) x^{42 + r} \in \langle F \rangle. 
 \end{aligned}
\end{equation}
By \eqref{eq:indlist} we have $3 x^{33} + 3 x^{57} \in \langle F \rangle$ and $6x^{33} \in \langle F \rangle$.
Hence $ 2 \cdot (3 x^{33} + 3 x^{57} ) - 6x^{33} = 6 x^{57} \in \langle F \rangle$.
Thus $6 a(r) b(r) x^{57} \in \langle F \rangle$.
By \eqref{eq:indlist} we have that $3x^{33} + 9x^{75}$ and $3 x^{33} +9x^{51}$ lie in $\langle F \rangle$.
Since $b(r) \in \{0, 3\}$ and $6x^{33} \in \langle F \rangle$, $b(r)(3x^{33} + 3x^{51})$ and $b(r)^2(3 x^{33} +3 x^{75})$ lie in $\langle F \rangle$.
Therefore, we get that
\begin{equation*}
 \begin{aligned}
   b(r)(3x^{33} + 3x^{51})  +  b(r)^2(3 x^{33} +3 x^{75}) = 3b(r)x^{51} &+ 3 b(r)^2 x^{75} \\&+ (3b(r) +3b(r)^2)x^{33} \in \langle F \rangle.
 \end{aligned}
\end{equation*}
Since $6x^{33} \in \langle F \rangle$ and $ 3b(r) +3b(r)^2 \equiv_6 0 $, we get $3b(r)x^{51}+ 3 b(r)^2 x^{75} \in \langle F \rangle$.
We also know by \eqref{eq:indlist} that $3x^{15}+3x^{39} \in \langle F \rangle $ and hence $a(r)^2 \cdot (3x^{15}+3x^{39}) \in \langle F \rangle$.
Therefore, we get by subtracting $ 3b(r)x^{51}+ 3 b(r)^2 x^{75} $, $a(r)^2 \cdot (3x^{15}+3x^{39})$ and $6 a(r) b(r) x^{57}$
from \eqref{eq:secondeq3^l}, 
\begin{equation*} 
\begin{aligned}
  3^{l_1+1}x^{r + 18 } &- 3a(r)^2x^{15} + 3a(r)  x^{33} +3^{2l_1+1}x^{2r+9} \\ &+  2 \cdot 3^{l_1+1} a(r) x^{24+r} + 2  \cdot 3^{l_1+1} b(r) x^{42 + r} \in \langle F \rangle. 
 \end{aligned}
\end{equation*}
Since $6x^{15}$ and $6x^{33}$ lie in $\langle F \rangle $ and $a(r) = 0$ or $a(r) = 3$ we also get
\begin{equation} \label{eq:secondeq3^l2}
\begin{aligned}
  3^{l_1+1}x^{r + 18 }  &+ a(r)x^{15} + a(r) x^{33} +3^{2l_1+1}x^{2r+9} \\ &+ 2 \cdot 3^{l_1+1} a(r) x^{24+r} + 2  \cdot 3^{l_1+1} b(r) x^{42 + r} \in \langle F \rangle . 
  \end{aligned}
\end{equation}
Since $42 +r < 2 r + 9$ we know by the induction hypothesis that there exist $l_2, l_3 \in \N_0$ such that
$3^{l_2}x^{r+24} + a(r+24) x^{15} + b(r+24)x^{33}$ and $3^{l_3}x^{r+42} + a(r+42) x^{15} + b(r+42)x^{33}$ lie in $\langle F \rangle$.
Therefore, 
\begin{equation*}\label{eq:secondeq3^l3}
 \begin{aligned}
  2 \cdot(3^{l_2}x^{r+24} + a(r+24) x^{15} + b(r+24)x^{33}) &- 2 \cdot a(r+24) x^{15} \\&- 2 \cdot b(r+24)x^{33}   =2 \cdot 3^{l_2}x^{r+24} \in \langle F \rangle
 \end{aligned}
\end{equation*}
and
\begin{equation*}\label{eq:secondeq3^l4}
 \begin{aligned}
  2 \cdot(3^{l_3}x^{r+42} &+ a(r+42) x^{15} + b(r+42)x^{33}) - 2 \cdot a(r+42) x^{15} \\&- 2 \cdot b(r+42)x^{33} =  2 \cdot 3^{l_3}x^{r+42} \in \langle F \rangle.
 \end{aligned}
\end{equation*}
Let $L_2 := \lcm(3^{l_1+1}, 3^{l_2}, 3^{l_3})$.
Then there exists a $k_2 \in \N$ such that 
$3^{k_2}\cdot 3^{l_1+1} \geq L_2 $,   $3^{k_2}\cdot3^{l_2}\geq L_2$ and $3^{k_2}\cdot 3^{l_3}\geq L_2$.
By multiplying \eqref{eq:secondeq3^l2} with $3^{k_2}$ and subtracting $a(r) \cdot 3^{k_2} \cdot  2 \cdot 3^{l_1+1}x^{r+24} \in \langle F \rangle $ and  
$ b(r) \cdot 3^{k_2} \cdot 2 \cdot 3^{l_1+1}x^{r+42} \in \langle F \rangle$ we get
\begin{equation}\label{eq:1:3^k_2}
 3^{l_1+1+k_2}x^{r + 18 }  + 3^{k_2}a(r)x^{15} + 3^{k_2}a(r) x^{33} +3^{2l_1+1+k_2}x^{2r+9} \in \langle F \rangle .
\end{equation}
For all $k \in \N$ it holds that $3^ka(r) \equiv_6 a(r)$.
Since $6x^{15}$ and $6x^{33}$ lie in $\langle F \rangle$, we get 
\begin{equation} \label{eq:impeq2}
 3^{l_1+k_2+1}x^{r + 18 }  +3^{2l_1 +k_2+1}x^{2r+9}  + a(r)x^{15} + a(r) x^{33} \in \langle F \rangle . 
\end{equation}
Now we have again to distinguish between some cases for $r$. 
Therefore, we consider the following table.
We read this table like the table before.  

\begin{table}[h]
\centering
\footnotesize
\setstretch{1.5}
\setlength{\tabcolsep}{2pt}
\label{secondtable}
\begin{tabular}{@{}|l||c|l|lll|lll|l|c|c|@{}}
\hline  

\setstretch{0.7} \begin{tabular}[l]{@{}l@{}}  Case: \\ $r \in $ \end{tabular}    &  $a(r)$        &  \setstretch{0.7}  \begin{tabular}[c]{@{}l@{}} Then \\ $r+18 \in $ \end{tabular}    &   \multicolumn{3}{l|}{  \setstretch{0.7} \begin{tabular}[l]{@{}l@{}} \\ Then the induction \\ hypothesis yields a \\ $l_4 \in \N_0$ such that \\$ \ldots \in \langle F \rangle$ \\ \\ \end{tabular} }   &     \multicolumn{3}{l|}{ \setstretch{0.7}  \begin{tabular}[l]{@{}l@{}} \\ Together with \eqref{eq:impeq2}, \\ Lemma \ref{lem:help3lxi} yields a \\ $t \in \N_0$ such that \\$ \ldots \in \langle F \rangle$ \\ \\ \end{tabular}}    &  $i \in$ &  $a(i)$ &  $b(i)$ \\ 
\hline
$\eqclass{72}{9}$                                              & 0             & $\eqclass{72}{27}$                                              & $3^{l_4}x^{r+18}$  &                 &                                                                                                                                                & $3^{t}x^{2r +9}$     &                  &                                                                                                                                                         & $\eqclass{72}{27}$   & 0 & 0\\
$\eqclass{72}{15}$                                             & 3             & $\eqclass{72}{33}$                                              & $3^{l_4}x^{r+18}$  &                 & $+ 3x^{33}$                                                                                                                                    & $3^{t}x^{2r +9}$     &    $+ 3x^{15}$   &                                                                                                                                                         & $\eqclass{72}{39}$   & 3 & 0\\
$\eqclass{72}{21}$                                             & 3             & $\eqclass{72}{39}$                                              & $3^{l_4}x^{r+18}$  & $+ 3x^{15}$     &                                                                                                                                                & $3^{t}x^{2r +9}$     &                  &  $+ 3x^{33}$                                                                                  							     & $\eqclass{72}{51}$   & 0 & 3\\
$\eqclass{72}{27}$                                             & 0             & $\eqclass{72}{45}$                                              & $3^{l_4}x^{r+18}$  & $+ 3x^{15}$     & $+ 3x^{33}$                                                                                                                                    & $3^{t}x^{2r +9}$     &    $+ 3x^{15}$   &  $+ 3x^{33}$                                                                                                                                            & $\eqclass{72}{63}$   & 3 & 3\\
$\eqclass{72}{33}$                                             & 0             & $\eqclass{72}{51}$                                              & $3^{l_4}x^{r+18}$  &                 & $+ 3x^{33}$                                                                                                                                    & $3^{t}x^{2r +9}$     &                  &  $+ 3x^{33}$                                                                                                                                            & $\eqclass{72}{3}$    & 0 & 3\\
$\eqclass{72}{39}$                                             & 3             & $\eqclass{72}{57}$                                              & $3^{l_4}x^{r+18}$  &                 & $+ 3x^{33}$                                                                                                                                    & $3^{t}x^{2r +9}$     &    $+ 3x^{15}$   &                                                                                                                                                         & $\eqclass{72}{15}$   & 3 & 0\\
$\eqclass{72}{45}$                                             & 3             & $\eqclass{72}{63}$                                              & $3^{l_4}x^{r+18}$  & $+ 3x^{15}$     & $+ 3x^{33}$                                                                                                                                    & $3^{t}x^{2r +9}$     &                  &                                                                                                                                                         & $\eqclass{72}{27}$   & 0 & 0\\
$\eqclass{72}{51}$                                             & 0             & $\eqclass{72}{69}$                                              & $3^{l_4}x^{r+18}$  & $+ 3x^{15}$     &                                                                                                                                                & $3^{t}x^{2r +9}$     &    $+ 3x^{15}$   &                                                                                                                                                         & $\eqclass{72}{39}$   & 3 & 0\\
$\eqclass{72}{57}$                                             & 0             & $\eqclass{72}{3}$                                               & $3^{l_4}x^{r+18}$  &                 & $+ 3x^{33}$                                                                                                                                    & $3^{t}x^{2r +9}$     &                  &   $+ 3x^{33}$                                                                                                                                           & $\eqclass{72}{51}$   & 0 & 3\\
$\eqclass{72}{63}$                                             & 3             & $\eqclass{72}{9}$                                               & $3^{l_4}x^{r+18}$  &                 &                                                                                                                                                & $3^{t}x^{2r +9}$     &    $+ 3x^{15}$   &   $+ 3x^{33}$                                                                                                                                           & $\eqclass{72}{63}$   & 3 & 3\\
$\eqclass{72}{69}$                                             & 3             & $\eqclass{72}{15}$                                              & $3^{l_4}x^{r+18}$  & $+ 3x^{15}$     &                                                                                                                                                & $3^{t}x^{2r +9}$     &                  &   $+ 3x^{33}$                                                                                                                                           & $\eqclass{72}{3}$    & 0 & 3\\
$\eqclass{72}{3}$                                              & 0             & $\eqclass{72}{21}$                                              & $3^{l_4}x^{r+18}$  & $+ 3x^{15}$     &                                                                                                                                                & $3^{t}x^{2r +9}$     &    $+ 3x^{15}$   &                                                                                                                                                         & $\eqclass{72}{15}$   & 3 & 0\\
\hline
\end{tabular}

\caption{\textit{Case:} $i \in \eqclass{12}{3}$. Distinction for $r \mod 72$ in $\eqclass{6}{3}$}
\end{table}
Hence we have that in each case there exists a $t \in \N_0$ such that 
$$3^{t} x^i + a(i)x^{15} + b(i)x^{33} \in \langle F \rangle .$$
This concludes the case $i \in \eqclass{12}{3}$.
\end{proof}

\begin{lem} \label{lem:cxiax15bx33inF}
Let $F := \{x^3 \}$. 
For all $ i \in \eqclass{6}{3}$ we have that $$3^{ \frac{s_3(i)-1}{2}} x^i +a(i)x^{15} +b(i)x^{33} $$
lies  in $\langle F \rangle $.
\end{lem}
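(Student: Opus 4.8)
The plan is to sharpen Lemma~\ref{lem:3lxiax15bx33inF}, which already furnishes, for every $i \in \eqclass{6}{3}$, \emph{some} exponent $l$ with $3^{l} x^i + a(i) x^{15} + b(i) x^{33} \in \langle F \rangle$, to the assertion that the specific value $l = \frac{s_3(i)-1}{2}$ is attainable. First I would record an \emph{upward-closure} step: if $3^{l} x^i + a(i) x^{15} + b(i) x^{33} \in \langle F \rangle$, then multiplying by $3$ and using that $6 x^{15}, 6 x^{33} \in \langle F \rangle$ by item~\eqref{lem:x3elemofF3} of Lemma~\ref{lem:x3elemofF}, together with $3 a(i) \equiv_6 a(i)$ and $3 b(i) \equiv_6 b(i)$, yields $3^{l+1} x^i + a(i) x^{15} + b(i) x^{33} \in \langle F \rangle$. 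Hence it suffices to produce \emph{one} element of $\langle F \rangle$ of the required shape whose coefficient of $x^i$ has $3$-adic valuation at most $\frac{s_3(i)-1}{2}$; the exact value is then recovered by raising the exponent. I would proceed by strong induction on $i$, taking the polynomials in~\eqref{eq:indlist} as the base cases $i \le 75$, where a direct check shows that each listed element equals $3^{\frac{s_3(i)-1}{2}} x^i + a(i) x^{15} + b(i) x^{33}$ (for instance $6 x^{15}$ for $i = 15$, since there $a(15) = 3$ and $3^{\frac{s_3(15)-1}{2}} = 3$, so that the two $x^{15}$-contributions coalesce).

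The decisive new feature in the induction step is that the construction underlying Lemma~\ref{lem:3lxiax15bx33inF} is \emph{not} tight: when $i = 2r+3$, squaring $q = 3^{l_1} x^{r} + \dotsb$ with $l_1 = \frac{s_3(r)-1}{2}$ produces $x^i$ with coefficient of $3$-valuation $2 l_1 + 1 = s_3(r)$, which in general exceeds the target $\frac{s_3(i)-1}{2}$ (for example $r = 45$, $i = 93$, where $s_3(r) = 3$ but $\frac{s_3(i)-1}{2} = 1$). To recover the missing factors of $3$ I would combine this element with a \emph{second} generator of $\langle F \rangle$ whose coefficient of $x^i$ has the minimal valuation $\frac{s_3(i)-1}{2}$. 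Such a generator comes from a multinomial coefficient of small $3$-valuation: expressing $i$ through the base-$3$ places already realized in $\langle F \rangle$ and applying the cubing relation $x^3 \circ (\,\cdot\,)$ (item~\eqref{lem:x3elemofF2} of Lemma~\ref{lem:x3elemofF}) yields a coefficient $\binom{3}{k_0,\ldots,k_n}$ on $x^i$ whose $3$-adic valuation is bounded below by Lemma~\ref{lem:pdividesmultinom} and can be arranged to equal $\frac{s_3(i)-1}{2}$ (e.g.\ $x^3 \circ (x^{81}+x^9+x^3)$ contributes $6\,x^{93}$, and $\gcd(27,6) = 3$). An integer linear combination of the two elements, whose $x^i$-coefficients have $3$-parts with greatest common divisor $3^{\frac{s_3(i)-1}{2}}$, then produces an element of $\langle F \rangle$ carrying coefficient exactly $3^{\frac{s_3(i)-1}{2}}$ on $x^i$.

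I expect the main obstacle to be the simultaneous bookkeeping of two quantities. First, one must verify that the chosen relations really give $x^i$-coefficients whose $3$-parts have gcd $3^{\frac{s_3(i)-1}{2}}$; this is where the surviving factors of $2$ enter, and it is precisely these factors, cleared by means of $6 x^{15}, 6 x^{33} \in \langle F \rangle$, that account for the divisibility conditions $2 \mid \sum_{j \in A} c_j$ and $2 \mid \sum_{j \in B} c_j$ appearing in Theorem~\ref{thm:x3}. Second, every monomial other than $x^i$, $x^{15}$, $x^{33}$ generated along the way must be eliminated: the contributions of degree below $i$ are removed by the induction hypothesis (after their coefficients are matched using the available multiples such as $6 x^{15}$, $6 x^{33}$, $18 x^{51}$, $18 x^{69}$), whereas the higher-degree terms created by the cubing relation must be subtracted off using previously generated elements before the linear combination is formed. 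Ensuring that the corrections remain confined to $x^{15}$ and $x^{33}$ throughout, so that they collapse exactly into $a(i) x^{15} + b(i) x^{33}$, is the part demanding the careful residue analysis modulo $72$ already encountered in Lemma~\ref{lem:3lxiax15bx33inF}.
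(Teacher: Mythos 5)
Your plan is essentially the paper's own argument: both rest on combining the element $3^{j}x^i+a(i)x^{15}+b(i)x^{33}$ supplied by Lemma~\ref{lem:3lxiax15bx33inF} with a second element of the form $2\cdot 3^{\frac{s_3(i)-1}{2}}x^i+\cdots$, obtained by applying $x^3\circ(x^{3^{h_1}}+x^{3^{h_2}}+q(x))$ to an inductively realized $q$ (the paper inducts on $s_3(i)$, peeling off two base-$3$ digits at a time, rather than on $i$), and then extracting the exact coefficient $3^{\frac{s_3(i)-1}{2}}$ via a gcd/B\'ezout combination while clearing the $x^{15}$, $x^{33}$ debris with $6x^{15},6x^{33}\in\langle F\rangle$. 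The only caveat is that your phrase about a single multinomial coefficient being ``arranged to equal'' the target valuation should be read as including the accumulated factor $3^{\frac{s_3(k)-1}{2}}$ carried by the inner polynomial, since the multinomial coefficients of a cube alone have $3$-adic valuation at most~$1$.
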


\begin{proof}
Let $e(i ):=   \frac{s_3(i)-1}{2} $
and $c(i) := 3^{e(i)} $.
Since $i \in \eqclass{6}{3} $ and $i$ is therefore odd, the digit sum $s_3(i)$ is always odd.
We proceed by induction on the digit sum of $i$.
If $s_3(i) = 1$, $i$ is a power of 3.
Hence by Lemma \ref{lem:x3elemofF}, $x^i \in \langle F \rangle$.
If $i=3$, $a(i) = b(i) =0$.
If $i=3^n$ with $n \geq 2$, then $i \in \eqclass{72}{9, 27}$ and therefore $a(i) = b(i) = 0.$
Now we assume $s_3(i) > 1$.
Then we choose $h_1, h_2, k \in \N$ such that $ i= 3^{h_1}+ 3^{h_2} + k$ with $ k \in \eqclass{6}{3}$ and $s_3(i) = s_3(k) + 2$.
We have by the induction hypothesis, $p(x) := c(k)x^k + a(k)x^{15}+b(k)x^{33} $
lies in $\langle F \rangle$.
For all $q(x) \in \langle F \rangle$, we have
\begin{equation*}
\begin{aligned}
 x^3 \circ (x^{3^{h_1}} + x^{3^{h_2}} + q(x) ) = 
 q(x)^3 &+ x^{3^{1 + h_1}} +  x^{3^{1 + h_2}} \\
 & + 3 q(x) x^{2 \cdot 3^{h_1}} + 3 q(x)^2 x^{3^{h_1}}\\
 &+ 3 q(x) x^{2 \cdot 3^{h_2}} + 3 q(x)^2 x^{3^{h_2}} \\
 &+ 3 x^{2 \cdot 3^{h_1} + 3^{h_2}} + 3 x^{3^{h_1} + 2 \cdot 3^{h_2}} \\
 &+  6 q(x) x^{3^{h_1} + 3^{h_2}} \in \langle F \rangle.
 \end{aligned}
\end{equation*}
We have $q(x) \in \langle F \rangle$, hence also $q(x)^3 \in \langle F \rangle.$
By Lemma \ref{lem:x3elemofF} we also have that $x^{3^{h_1+1}}$, $x^{3^{h_2+1}}$,  $3x^{2 \cdot 3^{h_1}+3^{h_2}} +3x^{3^{h_1}+2 \cdot 3^{h_2}} $,
$3 q(x) x^{2 \cdot 3^{h_1}} + 3 q(x)^2 x^{3^{h_1}}$ and $3 q(x) x^{2 \cdot 3^{h_2}} + 3 q(x)^2 x^{3^{h_2}}$ lie in $ \langle F \rangle$.
Since $x^3 \circ (x^{3^{h_1}} + x^{3^{h_2}} + q(x))$ lies in $\langle F \rangle$, we obtain $6q(x)x^{3^{h_1}+3^{h_2}} \in \langle F \rangle$.
Therefore, we get by setting $q(x):= p(x)$ that 
\begin{equation}\label{eq:gle}
 6 c(k) x^{k+3^{h_1}+3^{h_2}} + 6 a(k) x^{15+3^{h_1}+3^{h_2}} + 6 b(k) x^{33+3^{h_1}+3^{h_2}} \in \langle F \rangle. 
\end{equation}
Now we want to prove that $6 a(k) x^{15+3^{h_1}+3^{h_2}}$ and $6 b(k) x^{33+3^{h_1}+3^{h_2}}$ lie in $\langle F \rangle$.
By Lemma \ref{lem:x3elemofF} we know that $3x^{15} +3x^{21} \in \langle F \rangle$.
Setting $q(x)  := 3x^{15} +3x^{21} $ we get $18x^{21+3^{h_1}+3^{h_2}} + 18x^{15+3^{h_1}+3^{h_2}} \in \langle F \rangle$.
Since $ s_3(21 +3^{h_1} +3^{h_2}) \leq 5 $ and $s_3(21 +3^{h_1} +3^{h_2})$ is odd we get $c(21 +3^{h_1} +3^{h_2}) \in \{1, 3, 9\} $.
In all cases we get that $18x^{21+3^{h_1}+3^{h_2}} \in \langle F \rangle $ and hence $18x^{15+3^{h_1}+3^{h_2}} \in \langle F \rangle$.
By Lemma \ref{lem:x3elemofF} we know $3x^{33} +3x^{57} \in \langle F \rangle$.
Setting $ q(x) := 3x^{33} +3x^{57} $ yields $18x^{57+3^{h_1}+3^{h_2}} + 18x^{33+3^{h_1}+3^{h_2}} \in \langle F \rangle$.
Since $ s_3(57 +3^{h_1} +3^{h_2}) \leq 5 $ and $s_3(57 +3^{h_1} +3^{h_2})$ is odd we get $c(57 +3^{h_1} +3^{h_2}) \in \{1, 3, 9\}$.
In all cases we get that $18x^{57+3^{h_1}+3^{h_2}} \in \langle F \rangle $ and hence $18x^{33+3^{h_1}+3^{h_2}} \in \langle F \rangle$.
Since $a(k) \in \{0, 3\}$, $b(k) \in \{0, 3\}$, $18x^{15+3^{h_1}+3^{h_2}} \in \langle F \rangle$ and $18x^{33+3^{h_1}+3^{h_2}} \in \langle F \rangle$ 
we get from \eqref{eq:gle} that $6 c(k) x^{k+3^{h_1}+3^{h_2}} \in \langle F \rangle$.
This implies
$$ 2 \cdot 3^{1+e(k)} x^{3^{h_1}+3^{h_2}+k} \in \langle F \rangle. $$
Since $s_3(3^{h_1} +3^{h_2}+k) = 2 + s_3(k) $, $e(3^{h_1} +3^{h_2}+k) = e(k) + 1$.
Therefore,
\begin{equation} \label{eq:cxiax15bx33inFeq1}
  2 \cdot c(3^{h_1} +3^{h_2} +k )x^{3^{h_1}+3^{h_2}+k}\in \langle F \rangle.
\end{equation}
Hence, $2c(i)x^i \in \langle F \rangle$. 
By Lemma \ref{lem:3lxiax15bx33inF} we have that there exists a
$j \in \N_0$ such that 
\begin{equation} \label{eq:cxiax15bx33inFeq2}
 3^{j} x^i +a(i)x^{15} +b(i)x^{33} \langle F \rangle.
\end{equation}
We have that $c(i)= 3^{e(i)}$ and therefore
$c(i)$ is a multiple of $\gcd(2 \cdot c(i), 3^j)$.
Then there are $ m, n \in \Z$ such that $ m \cdot 2 \cdot c(i) + n \cdot 3^j = c(i)$ and therefore we get with 
\eqref{eq:cxiax15bx33inFeq1} and \eqref{eq:cxiax15bx33inFeq2} that
\begin{equation}
c(i) x^i + n \cdot a(i)x^{15} +  n\cdot b(i)x^{33}\in \langle F \rangle.
\end{equation}
Since $c(i)$ is odd and $ m \cdot 2 \cdot c(i)$ is even we have that $n$ is odd.
Therefore there exists $n_{1} \in \Z$ such that $n = 2n_{1} +1$.
Since $6x^{15}$ and $6x^{33}$ lie in $\langle F \rangle$ and $2\cdot n_{1}\cdot a(i)$ and $2\cdot n_{1} \cdot b(i)$ are multiples of 6, 
we also get $2\cdot n_{1}\cdot a(i)x^{15} \in \langle F \rangle$ and  $2\cdot n_{1} \cdot b(i)x^{33} \in \langle F \rangle$.
Now we compute 
$$c(i) x^i + (2n_{1}+1) \cdot a(i)x^{15} +  (2n_{1}+1) \cdot b(i)x^{33} - 2\cdot n_{1}\cdot a(i)x^{15} - 2 \cdot n_{1} \cdot b(i)x^{33} \in \langle F \rangle $$
and therefore we get
$$c(i)x^i +a(i)x^{15} +b(i)x^{33} \in \langle F \rangle.$$
\end{proof}

\begin{lem} \label{lem:calc24}
If $i\in \eqclass{6}{3} $ then the following hold: 
\begin{enumerate}
 \item $3i \equiv_{24} 21 \Leftrightarrow i \equiv_{24} 15  $.
 \item $3i \equiv_{24} 15 \Leftrightarrow i \equiv_{24} 21  $.
\end{enumerate}
\end{lem}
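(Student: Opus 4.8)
The plan is to reduce both biconditionals to a finite check modulo $24$. The key observation is that the hypothesis $i \in \eqclass{6}{3}$, i.e. $i \equiv_6 3$, already confines the residue of $i$ modulo $24$ to a small set: since $6 \mid 24$, every $i$ with $i \equiv_6 3$ satisfies $i \equiv_{24} r$ for some $r \in \{3, 9, 15, 21\}$, these being exactly the four residues modulo $24$ that are congruent to $3$ modulo $6$. Because multiplication by $3$ is compatible with congruence modulo $24$, the residue of $3i$ modulo $24$ depends only on the residue class of $i$ modulo $24$, so it suffices to evaluate $3r \bmod 24$ for each of these four values of $r$.

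Concretely, I would first record the computation
\[
3 \cdot 3 \equiv_{24} 9, \qquad 3 \cdot 9 \equiv_{24} 3, \qquad 3 \cdot 15 \equiv_{24} 21, \qquad 3 \cdot 21 \equiv_{24} 15 .
\]
Reading this off establishes both claims at once. For (1): among the admissible residues, $3i \equiv_{24} 21$ occurs exactly when $i \equiv_{24} 15$, giving the biconditional $3i \equiv_{24} 21 \Leftrightarrow i \equiv_{24} 15$. For (2): similarly $3i \equiv_{24} 15$ occurs exactly when $i \equiv_{24} 21$, giving $3i \equiv_{24} 15 \Leftrightarrow i \equiv_{24} 21$. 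Note that the map $r \mapsto 3r \bmod 24$ permutes $\{3,9,15,21\}$, so in each case the residue on the right-hand side of the biconditional is the \emph{unique} preimage, which is what makes both directions of each ``iff'' hold.

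The only point meriting explicit justification --- and the closest thing to an obstacle here, although it is entirely routine --- is the reduction step: that $i \equiv_6 3$ restricts $i$ to the four residues $\{3,9,15,21\}$ modulo $24$. This holds because $i \equiv_6 3$ means $i = 3 + 6m$ for some $m \in \N_0$, and the value of $3 + 6m$ modulo $24$ depends only on $m \bmod 4$, yielding precisely $3, 9, 15, 21$. Once this is in place, the rest is the direct arithmetic displayed above, with no further machinery required.
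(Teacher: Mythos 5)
Your proof is correct and amounts to the same finite verification modulo $24$ as the paper's: the paper lists the three preimages of $21$ (resp.\ $15$) under multiplication by $3$ modulo $24$ and then discards those not congruent to $3$ modulo $6$, while you first restrict $i$ to the residues $\{3,9,15,21\}$ and compute their images forward. The two organizations are interchangeable and equally rigorous.
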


\begin{proof}
 For all $i \in \Z$ it holds that 
  $$3i \equiv_{24} 21 \Leftrightarrow (i \equiv_{24} 7 \vee i \equiv_{24}  15 \vee  i \equiv_{24} 23 )$$ and
  $$3i \equiv_{24} 15 \Leftrightarrow (i \equiv_{24} 5 \vee i \equiv_{24}  13 \vee  i \equiv_{24} 21 ).$$
Since $i\in \eqclass{6}{3} $ we are done.
\end{proof}

\begin{lem}\label{lem:equiv24}
Let $ p = \sum_{i=0 }^n c_ix^i \in M$ and let  $P =\sum_{j=0 }^m C_jx^j$
with $C_0, \ldots , C_m \in \Z $ such that $P = p^3$. 
Then 2 divides $\sum_{j \in A} C_j$.
\end{lem}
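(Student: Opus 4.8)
The plan is to pass to the quotient ring $\Z[x]/(x^{24}-1)$ and to work modulo $2$, which turns the statement into a short computation in $R := (\Z/2\Z)[x]/(x^{24}-1)$. The key observation is that $\sum_{j \in A} C_j$ is exactly the sum of the coefficients of $x^{15}$ and $x^{21}$ in the image of $P = p^3$ under reduction modulo $x^{24}-1$: this reduction collapses each monomial $x^j$ to $x^{j \bmod 24}$ and hence adds up the coefficients of all $x^j$ with $j \equiv_{24} 15$ (respectively $j \equiv_{24} 21$), and the sum is finite because $P$ has finite degree. Since reduction modulo $x^{24}-1$ is a ring homomorphism with respect to ordinary multiplication, it commutes with cubing, so I may reduce $p$ first and then cube.

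First I would record the image $\bar{p}$ of $p$ in $R$. As $p \in M$ has all its exponents in $\eqclass{6}{3}$, every exponent is congruent to one of $3,9,15,21$ modulo $24$, so
\[
  \bar{p} = s_3 x^3 + s_9 x^9 + s_{15}x^{15} + s_{21}x^{21},
\]
where $s_r \in \Z/2\Z$ is the parity of $\sum_{i \equiv_{24} r} c_i$. The defining condition $2 \mid \sum_{j \in A} c_j$ of $M$ says precisely that $s_{15} = s_{21}$, and this is the only property of $M$ that I expect to use (neither the $3$-adic divisibility conditions nor the condition coming from $B$ is needed here).

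Next I would exploit that $R$ has characteristic $2$, so squaring is the Frobenius endomorphism and $\bar{p}^2 = s_3 x^6 + s_9 x^{18} + s_{15}x^{6} + s_{21}x^{18} = u x^6 + v x^{18}$ with $u := s_3 + s_{15}$ and $v := s_9 + s_{21}$ (using $x^{30}=x^6$ and $x^{42}=x^{18}$ in $R$). Multiplying $\bar{p}^2$ by $\bar{p}$ and reducing the exponents modulo $24$, one reads off $[x^{15}]\bar{p}^3 = u s_9 + v s_{21}$ and $[x^{21}]\bar{p}^3 = u s_{15} + v s_3$, hence
\[
  \sum_{j \in A} C_j \equiv u(s_9 + s_{15}) + v(s_3 + s_{21}) \pmod 2 .
\]
Substituting $u = s_3 + s_{15}$ and $v = s_9 + s_{21}$ and using $s_{15} = s_{21}$, both summands collapse to $(s_3+s_{15})(s_9+s_{15})$, so their sum equals $2(s_3+s_{15})(s_9+s_{15}) \equiv 0$, which is the assertion.

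Every step is routine once the reduction is in place; the part I would treat most carefully is the bookkeeping of exponents modulo $24$ in the product $\bar{p}^2\bar{p}$, so that the coefficients of $x^{15}$ and $x^{21}$ are collected correctly, together with the final observation that the single hypothesis $s_{15}=s_{21}$ forces the two extracted coefficients to be equal and hence their sum to be even.
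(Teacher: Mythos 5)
Your proof is correct, and it takes a genuinely different route from the paper's. The paper expands $p^3$ by the multinomial theorem, splitting $\sum_{j\in A}C_j$ into contributions of the shapes $c_r^3$, $3c_r^2c_s$ and $6c_rc_sc_t$, discards the latter two types modulo $2$, and then uses Lemma~\ref{lem:calc24} to identify $\sum_{3r\in A}c_r$ with $\sum_{r\in A}c_r$, which is even because $p\in M$. Your passage to $(\Z/2\Z)[x]/(x^{24}-1)$ packages all of the modulo-$24$ and modulo-$2$ bookkeeping into ring arithmetic and lets the Frobenius do the squaring, so the whole argument reduces to tracking four residues and one line of algebra; I checked the two extracted coefficients $us_9+vs_{21}$ and $us_{15}+vs_3$ and the final cancellation, and they are right. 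A concrete advantage of your version is how it treats the cross terms $3c_r^2c_s$: the paper cancels them by pairing $(r,s)$ with $(s,r)$, but the conditions $2r+s\in A$ and $2s+r\in A$ are not equivalent (for instance $r=3$, $s=15$ gives $2r+s=21\in A$ while $2s+r=33\equiv_{24}9\notin A$), so that pairing alone does not make the middle sum even; in your formulation the total cross-term contribution comes out as $(s_3+s_9)(s_{15}+s_{21})$, which is killed by the same hypothesis $s_{15}=s_{21}$ that handles the cube terms. What you give up is only that the multinomial decomposition transfers verbatim to Lemma~\ref{lem:equiv72}, whereas your computation would have to be redone modulo $x^{72}-1$ with six residues --- still routine, but no longer a four-term calculation.
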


\begin{proof}
We have $6c_ic_jc_k \equiv_2 0$ for all $i, j, k \in \N_0$ and $3c_i^2c_j + 3c_ic_j^2 \equiv_2 3c_ic_j + 3c_ic_j \equiv_2 2 \cdot (3c_ic_j) \equiv_2 0 $ for all $i, j \in \N_0 $.
Furthermore, 
$$ \SUM{j \in A}{}{C_j} = \SUM{\noSp{r \in \eqclass{6}{3}}{3r \in A}}{}{c_r^3} +
\SUM{\noSp{\noSp{r, s \in \eqclass{6}{3}}{r \neq s}}{2r+s \in A}}{}{3c_r^2c_s} +
\SUM{\noSp{\noSp{r, s, t \in \eqclass{6}{3}}{r < s < t}}{r+s +t \in A}}{}{6c_rc_sc_t}.
$$
We compute
$$ \SUM{\noSp{r \in \eqclass{6}{3}}{3r \in A}}{}{c_r^3} +
\SUM{\noSp{\noSp{r, s \in \eqclass{6}{3}}{r \neq s}}{2r+s \in A}}{}{3c_r^2c_s} +
\SUM{\noSp{\noSp{r, s, t \in \eqclass{6}{3}}{r < s < t}}{r+s +t \in A}}{}{6c_rc_sc_t} \equiv_2 
 \SUM{\noSp{r \in \eqclass{6}{3}}{3r \in A}}{}{c_r^3},$$ and since $ c^3 \equiv_2 c$ we get
$$ \SUM{\noSp{r \in \eqclass{6}{3}}{3r \in A}}{}{c_r^3} 
\equiv_2   \SUM{\noSp{r \in \eqclass{6}{3}}{3r \in A}}{}{c_r}. $$
By \ref{lem:calc24} we get that 
$$ \SUM{\noSp{r \in \eqclass{6}{3}}{3r \in A}}{}{c_r} = 
\SUM{r \in A}{}{c_r}  $$
and this is divisible by 2 since $ p \in M$.
\end{proof}

\begin{lem}\label{lem:calc72}
Let $k\in \eqclass{6}{3}$. Then it holds that: 
$3k \in \eqclass{72}{3, 33 , 45, 51, 57, 63}$ if and only if $  k \in \eqclass{72}{15, 21, 39, 45, 63, 69 } $.
\end{lem}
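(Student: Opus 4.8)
The statement is a purely arithmetic equivalence between two residue conditions modulo $72$, so the plan is to reduce it to a short finite check. Since $k \in \eqclass{6}{3}$, the number $k$ is an odd multiple of $3$; in particular the hypothesis $k \equiv_6 3$ confines $k$ to the twelve residues $3,9,15,21,27,33,39,45,51,57,63,69$ modulo $72$. Both conditions in the statement depend only on $k$ modulo $72$ (the left-hand one through $3k$ modulo $72$, which is in turn determined by $k$ modulo $24$), so one could simply tabulate both sides on these twelve residues. I would instead prefer the following slightly more conceptual route, which parallels the proof of Lemma~\ref{lem:calc24}.

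First I would exploit that $k$ is a multiple of $3$, so that $3k$ is a multiple of $9$. The residues of the multiples of $9$ modulo $72$ are $0,9,18,27,36,45,54,63$, and among the six target residues $3,33,45,51,57,63$ only $45$ and $63$ are multiples of $9$. Hence, under the hypothesis, $3k \in \eqclass{72}{3, 33 , 45, 51, 57, 63}$ holds if and only if $3k \equiv_{72} 45$ or $3k \equiv_{72} 63$. Next I would cancel the common factor $3$: the congruence $3k \equiv_{72} 45$ is equivalent to $k \equiv_{24} 15$, and $3k \equiv_{72} 63$ is equivalent to $k \equiv_{24} 21$. Thus the left condition is equivalent to $k \equiv_{24} 15$ or $k \equiv_{24} 21$.

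Finally I would lift these two classes modulo $24$ back to classes modulo $72$: the residues modulo $72$ reducing to $15$ modulo $24$ are $15,39,63$, and those reducing to $21$ modulo $24$ are $21,45,69$. Their union is exactly $\eqclass{72}{15, 21, 39, 45, 63, 69}$, the right-hand side, which completes the argument. There is no genuine obstacle here; the only points requiring a little care are the legitimacy of cancelling the factor $3$ from both the congruence and the modulus (valid because $\gcd(3,72)=3$ divides $72$, so $3(k-15)\equiv_{72}0$ is equivalent to $k-15\equiv_{24}0$) and the bookkeeping in the final lift. Should one wish to avoid even this, the twelve-case tabulation described above is an equally valid, if less illuminating, fallback.
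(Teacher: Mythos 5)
Your proof is correct, but it is organized differently from the paper's. The paper proves the lemma by brute-force inversion: for each of the six target residues $m\in\{3,33,45,51,57,63\}$ it lists all three solutions of $3k\equiv_{72}m$ (eighteen residue classes modulo $72$ in total) and then discards those not lying in $\eqclass{6}{3}$, leaving exactly $\{15,21,39,45,63,69\}$. You instead prune the target set first: since $3\mid k$, the product $3k$ is a multiple of $9$, and among the six targets only $45$ and $63$ are multiples of $9$, so the left-hand condition collapses to $3k\equiv_{72}45$ or $3k\equiv_{72}63$; cancelling the common factor $3$ (legitimate since $3(k-15)\equiv_{72}0$ iff $k\equiv_{24}15$) and lifting the two classes modulo $24$ back to six classes modulo $72$ gives the right-hand side. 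Both arguments are sound finite computations; yours is shorter and exposes the structural reason why exactly six of the eighteen preimages survive (they are the preimages of the two targets divisible by $9$), while the paper's is a uniform template it reuses for the analogous Lemma on residues modulo $24$. One small remark: you only use $3\mid k$ rather than the full hypothesis $k\equiv_6 3$; this is harmless (indeed both sides of the equivalence already force $k$ odd), so your argument proves a marginally stronger statement.
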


\begin{proof}
  The following hold for all $k \in \Z$:
  \begin{eqnarray*}
   3k \equiv_{72} 3 \Leftrightarrow  (k \equiv_{72} 1 \vee k \equiv_{72} 25 \vee k \equiv_{72} 49).\\
   3k \equiv_{72} 33 \Leftrightarrow (k \equiv_{72} 11\vee k \equiv_{72} 35 \vee k \equiv_{72} 59).\\
   3k \equiv_{72} 45 \Leftrightarrow (k \equiv_{72} 15 \vee k \equiv_{72} 39 \vee k \equiv_{72} 63).\\
   3k \equiv_{72} 51 \Leftrightarrow (k \equiv_{72} 17 \vee k \equiv_{72} 41 \vee k \equiv_{72} 65).\\
   3k \equiv_{72} 57 \Leftrightarrow (k \equiv_{72} 19 \vee k \equiv_{72} 43 \vee k \equiv_{72} 67). \\
   3k \equiv_{72} 63 \Leftrightarrow (k \equiv_{72} 21 \vee k \equiv_{72} 45 \vee k \equiv_{72} 69).
 \end{eqnarray*}
Since $k \in \eqclass{6}{3} $ the result follows.
\end{proof}

\begin{lem}\label{lem:equiv72}
Let $ p = \sum_{i=0 }^n c_ix^i \in M$ and let  $P =\sum_{j=0 }^m C_jx^j$
with $C_0, \ldots , C_m \in \Z $ such that $P = p^3$ . 
Then 2 divides $ \sum_{j \in B}C_j$.
\end{lem}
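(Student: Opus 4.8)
The plan is to mirror the proof of Lemma~\ref{lem:equiv24}, replacing the bookkeeping modulo $24$ by the corresponding analysis modulo $72$ supplied by Lemma~\ref{lem:calc72}. Writing $P = p^3$ and expanding the cube as a sum over ordered triples of indices, I would group the monomials according to the multiset type of the triple, obtaining
\[
  \sum_{j \in B} C_j
  = \sum_{\substack{r \in \eqclass{6}{3} \\ 3r \in B}} c_r^3
  + \sum_{\substack{r,s \in \eqclass{6}{3},\, r \ne s \\ 2r+s \in B}} 3 c_r^2 c_s
  + \sum_{\substack{r,s,t \in \eqclass{6}{3},\, r<s<t \\ r+s+t \in B}} 6 c_r c_s c_t ,
\]
exactly as in Lemma~\ref{lem:equiv24}; here the support of $p$ lies in $\eqclass{6}{3}$, and no contribution arises at the excluded exponent $3$, since $2r+s \ge 9$ and $r+s+t \ge 9$ for admissible indices. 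The last sum is a multiple of $6$, hence $\equiv_2 0$, and may be discarded.

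For the diagonal sum I would use $c^3 \equiv_2 c$ to reduce it to $\sum_{3r \in B} c_r$. Lemma~\ref{lem:calc72} identifies the index set $\{\, r \in \eqclass{6}{3} \mid 3r \in B \,\}$ with $\eqclass{72}{15,21,39,45,63,69}$, and this set is precisely $A$ rewritten modulo $72$, since $\eqclass{24}{15,21} = \eqclass{72}{15,21,39,45,63,69}$. Hence the diagonal sum is $\equiv_2 \sum_{j \in A} c_j$, which vanishes because $p \in M$.

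The middle sum is the crux. Modulo $2$ each term satisfies $3 c_r^2 c_s \equiv_2 c_r c_s$, so I would pair the ordered pairs $(r,s)$ and $(s,r)$: their joint contribution equals $c_r c_s$ when exactly one of $2r+s$ and $2s+r$ lies in $B$, and is $\equiv_2 0$ otherwise (in particular, pairs with $r \equiv s \pmod{72}$ cancel, since then both $2r+s$ and $2s+r$ are $\equiv 3r \pmod{72}$). The key computational step, analogous to Lemmas~\ref{lem:calc24} and~\ref{lem:calc72} and to the case tables preceding this lemma, is a finite check over the twelve residue classes of $\eqclass{6}{3}$ modulo $72$: I claim that for $r,s \in \eqclass{6}{3}$, exactly one of $2r+s, 2s+r$ lies in $B$ if and only if exactly one of $r,s$ lies in $A$ (with $A = \eqclass{72}{15,21,39,45,63,69}$). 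Granting this, the middle sum collapses modulo $2$ to the bilinear expression $\bigl( \sum_{r \in \eqclass{6}{3} \setminus A} c_r \bigr)\bigl( \sum_{s \in A} c_s \bigr)$, one factor of which is $\sum_{j \in A} c_j \equiv_2 0$ by $p \in M$.

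Combining the three parts gives $\sum_{j \in B} C_j \equiv_2 0$. The main obstacle is precisely the residue computation modulo $72$ establishing the complete bipartite pattern of the middle sum between the $A$-classes and the remaining classes of $\eqclass{6}{3}$; it is routine but must be carried out over all twelve classes, just as in the tables of Lemma~\ref{lem:3lxiax15bx33inF}. It is worth noting that, as in Lemma~\ref{lem:equiv24}, only the congruence $2 \mid \sum_{j \in A} c_j$ from the definition of $M$ is actually used.
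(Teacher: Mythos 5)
Your proposal is correct, and its skeleton --- decomposing $\sum_{j\in B}C_j$ into the diagonal part, the $3c_r^2c_s$ part and the $6c_rc_sc_t$ part, discarding the third as a multiple of $6$, and reducing the first to $\sum_{j\in A}c_j$ via Lemma~\ref{lem:calc72} and $\eqclass{24}{15,21}=\eqclass{72}{15,21,39,45,63,69}$ --- coincides with the paper's. Where you genuinely diverge is the middle sum. The paper disposes of it with the single identity $3c_r^2c_s+3c_rc_s^2\equiv_2 0$, which tacitly assumes that for each unordered pair $\{r,s\}$ the two conditions $2r+s\in B$ and $2s+r\in B$ hold or fail together, so that the two ordered terms cancel; this symmetry is false (for $r\equiv_{72}3$, $s\equiv_{72}15$ one has $2r+s\equiv_{72}21\notin B$ but $2s+r\equiv_{72}33\in B$), and indeed the middle sum alone can be odd when $p\notin M$: for $p=x^3+x^{15}$ it equals $3$. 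Your analysis supplies exactly what is missing. The unordered pairs contributing an odd number of ordered terms are precisely those with exactly one index in $A$: I checked your key claim that $[2r+s\in B]+[2s+r\in B]\equiv_2[r\in A]+[s\in A]$ for all $r,s\in\eqclass{6}{3}$ over the twelve residue classes modulo $72$, and it holds in every case. Hence the middle sum collapses modulo $2$ to $\bigl(\sum_{r\in\eqclass{6}{3}\setminus A}c_r\bigr)\bigl(\sum_{s\in A}c_s\bigr)$, which vanishes by the hypothesis $2\mid\sum_{j\in A}c_j$ --- used here a second time, as you note. So your route is not only valid but more careful than the paper's at the one step where care is actually required; the price is the finite verification over the twelve classes, which you correctly identify as the crux and which does go through.
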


\begin{proof}
We have again $6c_ic_jc_k\equiv_2 0$ for all $i, j, k \in \N_0$ and $3c_i^2c_j + 3c_ic_j^2 \equiv_2 0 $ for all $i, j \in \N_0 $.
Furthermore, it holds that
\begin{eqnarray*}
\SUM{j \in B}{}{C_j} =
\SUM{\noSp{r \in \eqclass{6}{3}}{3r \in B}}{}{ c_r^3 } +
\SUM{\noSp{r,s \in \eqclass{6}{3}}{\noSp{r \neq s}{ 2r +s \in B}}}{}{ 3c_r^2c_s } +
\SUM{\noSp{r,s, t \in \eqclass{6}{3}}{\noSp{r < s < t}{ 2r +s \in B}}}{}{ 6c_rc_sc_t }.
\end{eqnarray*}
We compute
\begin{eqnarray*}
\SUM{\noSp{r \in \eqclass{6}{3}}{3r \in B}}{}{ c_r^3 } +
\SUM{\noSp{r,s \in \eqclass{6}{3}}{\noSp{r \neq s}{ 2r +s \in B}}}{}{ 3c_r^2c_s } +
\SUM{\noSp{r, s, t \in \eqclass{6}{3}}{\noSp{r < s < t}{ 2r +s \in B}}}{}{ 6c_rc_sc_t } \equiv_2  
\SUM{\noSp{r \in \eqclass{6}{3}}{3r \in B}}{}{c_r^3},
\end{eqnarray*} 
and since $ c^3 \equiv_2 c$ we get
$$ \SUM{\noSp{r \in \eqclass{6}{3}}{3r \in B}}{}{c_r^3} 
 \equiv_2   \SUM{\noSp{r \in \eqclass{6}{3}}{3r \in B}}{}{c_r} $$
By \ref{lem:calc72} we get that
$$ \SUM{\noSp{r \in \eqclass{6}{3}}{3r \in B}}{}{c_r}
\equiv_2  
\SUM{j \in \eqclass{72}{15, 21, 39, 45, 63, 69}}{}{c_j}.
$$
We can simplify $ \SUM{j \in \eqclass{72}{15, 21, 39, 45, 63, 69}}{}{c_j}=  \SUM{j \in A}{}{c_j}.$ 
Since $p \in M$ we have 2 divides $\SUM{j \in A}{}{c_j}$ and therefore we are done.
\end{proof}

\begin{lem} \label{lem:x3MsubF} 
 Let $F := \{ x^3 \} $.
 Then $M \subseteq \langle  F \rangle$.
\end{lem}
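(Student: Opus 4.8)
The plan is to represent an arbitrary $p \in M$ as an integer-linear combination of the polynomials provided by Lemma~\ref{lem:cxiax15bx33inF} and then to cancel the spurious $x^{15}$- and $x^{33}$-contributions that this combination produces. I would start by writing $p = \sum_{i=0}^n c_i x^i \in M$ and setting $c(i) := 3^{\frac{s_3(i)-1}{2}}$ for $i \in \eqclass{6}{3}$. Since $p \in M$, we have $c_i = 0$ for $i \notin \eqclass{6}{3}$ and $c(i) \mid c_i$ for $i \in \eqclass{6}{3}$; hence there are integers $m_i$ with $c_i = m_i\,c(i)$, and I set $m_i := 0$ for $i \notin \eqclass{6}{3}$.

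By Lemma~\ref{lem:cxiax15bx33inF}, for each $i \in \eqclass{6}{3}$ the polynomial $g_i := c(i) x^i + a(i) x^{15} + b(i) x^{33}$ lies in $\langle F \rangle$. As $\langle F \rangle$ is closed under $+$ and $-$, and therefore under integer multiples, the finite sum $\tilde p := \sum_i m_i g_i$ lies in $\langle F \rangle$. Expanding and using $\sum_i m_i c(i) x^i = p$, I would obtain $\tilde p = p + \alpha x^{15} + \beta x^{33}$ with $\alpha := \sum_i m_i a(i)$ and $\beta := \sum_i m_i b(i)$. It then suffices to show $\alpha x^{15}, \beta x^{33} \in \langle F \rangle$, since then $p = \tilde p - \alpha x^{15} - \beta x^{33} \in \langle F \rangle$.

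The crucial observation is that $\alpha$ and $\beta$ are divisible by $6$. Because $a(i) \in \{0,3\}$ with $a(i) = 3$ precisely for $i \in A$ (and $A \subseteq \eqclass{6}{3}$), we have $\alpha = 3 \sum_{i \in A} m_i$, so $3 \mid \alpha$; likewise $\beta = 3 \sum_{i \in B} m_i$, so $3 \mid \beta$. For the remaining factor $2$, note that each $c(i)$ is a power of $3$, hence odd, so $m_i \equiv_2 c_i$; thus $\sum_{i \in A} m_i \equiv_2 \sum_{i \in A} c_i \equiv_2 0$, the last step using $p \in M$, and similarly $\sum_{i \in B} m_i$ is even. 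Hence $6 \mid \alpha$ and $6 \mid \beta$. Since $6 x^{15}, 6 x^{33} \in \langle F \rangle$ by item~\ref{lem:x3elemofF3} of Lemma~\ref{lem:x3elemofF}, the polynomials $\alpha x^{15}$ and $\beta x^{33}$ are integer multiples of these and therefore lie in $\langle F \rangle$, completing the argument.

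The genuine difficulty in this lemma has already been absorbed into Lemma~\ref{lem:cxiax15bx33inF}; once that is granted, the only thing to watch is the interaction between the coefficient-divisibility condition and the two parity conditions defining $M$. The key point making everything fit is that the correction terms automatically carry a factor $3$ coming from the values of $a$ and $b$, so that the evenness of $\sum_{j \in A} c_j$ and of $\sum_{j \in B} c_j$ is exactly enough to promote $\alpha$ and $\beta$ to multiples of $6$, matching the available generators $6 x^{15}$ and $6 x^{33}$.
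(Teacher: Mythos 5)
Your argument is correct, and it reaches the conclusion by a somewhat different route than the paper. Both proofs rest on the same two ingredients: Lemma~\ref{lem:cxiax15bx33inF}, which supplies the generators $g_i = 3^{\frac{s_3(i)-1}{2}}x^i + a(i)x^{15} + b(i)x^{33}$, and the membership of $6x^{15}$ and $6x^{33}$ in $\langle F\rangle$ from Lemma~\ref{lem:x3elemofF}. The paper, however, proceeds by induction on the degree $n$: it peels off the leading term by subtracting an integer multiple of $q(x) = 3^{\frac{s_3(n)-1}{2}}x^{n} + a(n)x^{15} + b(n)x^{33}$, checks that $q \in M$ so that the remainder again lies in $M$, and handles the degrees $n \in \{3,9,15,21,27,33\}$ as explicit base cases. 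You instead form the global linear combination $\tilde p = \sum_i m_i g_i$ in one step and then dispose of the accumulated error $\alpha x^{15} + \beta x^{33}$ by a single parity computation: $3 \mid \alpha,\beta$ because $a,b$ take values in $\{0,3\}$, and $2 \mid \alpha,\beta$ because each $c(i)$ is odd, so $m_i \equiv_2 c_i$ and the two parity conditions in the definition of $M$ apply. This is the same divisibility bookkeeping that is hidden in the paper's verification that $q \in M$ at each inductive step, but made explicit once at the end; your version avoids the induction and the six base cases entirely, at the cost of no additional machinery. All the steps check out: $A$ and $B$ are indeed contained in $\eqclass{6}{3}$, the sums $\sum_{i\in A} m_i$ and $\sum_{i\in B} m_i$ range over exactly the indices entering the conditions $2 \mid \sum_{j\in A}c_j$ and $2 \mid \sum_{j\in B}c_j$, and $\langle F\rangle$ is closed under integer multiples, so $\alpha x^{15}$ and $\beta x^{33}$ lie in $\langle F\rangle$ as multiples of $6x^{15}$ and $6x^{33}$.
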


\begin{proof}
 We have to show for all $n \in \N $ and for all $\sum_{i=0}^{n}c_ix^i \in M $, we have $\sum_{i=0}^{n}c_ix^i \in \langle F \rangle$.
 We proceed by induction on $n$.
 For the induction basis let $n=3$. 
 Since $x^3 \in \langle F \rangle $ we also have $c_3x^3 \in \langle F \rangle$.
 Now let $n>3$ and let  $\sum_{i=0}^{n} c_ix^i \in M$.
 If $n = 9$, Lemma \ref{lem:x3elemofF} yields that $x^9 \in \langle F \rangle $ and hence $c_9x^9 + c_3x^3 \in \langle F \rangle$.
 If $n=15$ we have to show that 
 \begin{equation} \label{eq:elem1ofF}
  c_{15}x^{15} + c_9x^9 + c_3x^3 \in \langle F \rangle,
 \end{equation}
 knowing by the assumptions on $c_{15}$ that $3^{ \frac{s_3(15)-1}{2}} = 3$ divides $ c_{15}$ and $2$ divides $c_{15}$.
 Hence, $6$ divides $c_{15}$.
 By Lemma \ref{lem:x3elemofF} we have $6x^{15} \in \langle F \rangle$ and since $c_3x^3$ and $c_9x^9$ lie in $\langle F \rangle$ we get \eqref{eq:elem1ofF}.
 If $n=21$ we have to show that 
 \begin{equation} \label{eq:elem2ofF}
  c_{21}x^{21} + c_{15}x^{15} + c_9x^9 + c_3x^3 \in \langle F \rangle,
 \end{equation}
 knowing by the assumptions on $c_{21}$ and $c_{15}$ that $3^{ \frac{s_3(21)-1}{2}} = 3$ divides $c_{21}$, $3$ divides $ c_{15}$ and $2$ divides $c_{21} + c_{15}$.\\
 \textit{Case: $6$ divides $c_{21}$ and $6$ divides $c_{15}$}:
 Setting $p(x):=x^9$ and $a:=1$, Lemma \ref{lem:x3elemofF} yields that $6x^{21} \in \langle F \rangle$. 
 Since $6x^{15}$, $c_3x^3$ and $c_9x^9$ lie in $\langle F \rangle$ we get \eqref{eq:elem2ofF}. \\
 \textit{Case: $6$ divides $c_{21} +c_{15}$ and $6$ does not divide $c_{21}$}:
 By Lemma \ref{lem:x3elemofF} we have ${3x^{21} +3 x^{15} \in \langle F \rangle}$ and since
 $6x^{21}$ and $6x^{15}$ lie in $\langle F \rangle$ and both $c_{21}$ and $c_{15}$ are elements of $\eqclass{6}{3}$, we obtain
 $c_{21}x^{21} + c_{15}x^{15} \in \langle F \rangle$.
 Since $c_3x^3$ and $c_9x^9$ lie in $\langle F \rangle$ we get \eqref{eq:elem2ofF}.\\
 If $n=27$ we have to show that 
 \begin{equation} \label{eq:elem3ofF}
 c_{27}x^{27} + c_{21}x^{21} + c_{15}x^{15} + c_9x^9 + c_3x^3 \in \langle F \rangle,
 \end{equation}
 knowing by the assumptions on $c_{21}$ and $c_{15}$ that $3$ divides $c_{21}$, $3$ divides $ c_{15}$ and $2$ divides $c_{21} + c_{15}$.
 By Lemma \ref{lem:x3elemofF} we know that $x^{27} \in \langle F \rangle$ and therefore also $c_{27}x^{27} \in \langle F \rangle$.
 Subtracting $c_{27}x^{27}$ from \ref{eq:elem3ofF} we get $ c_{21}x^{21} + c_{15}x^{15} + c_9x^9 + c_3x^3  \in M $.
 By the induction hypothesis, $ c_{21}x^{21} + c_{15}x^{15} + c_9x^9 + c_3x^3  \in \langle F \rangle$.
 Hence, we get \eqref{eq:elem3ofF}.
 If $n=33$ we have to show that 
 \begin{equation} \label{eq:elem4ofF}
 c_{33}x^{33} + c_{27}x^{27} + c_{21}x^{21} + c_{15}x^{15} + c_9x^9 + c_3x^3 \in \langle F \rangle,
 \end{equation}
 knowing by the assumptions on $c_{33}$, $c_{21}$ and $c_{15}$ that $3^{ \frac{s_3(33)-1}{2}} =3 $ divides $c^{33}$, $3$ divides $c_{21}$, 
 $3$ divides $ c_{15}$, $2$ divides $c_{21} + c_{15}$ and $2$ divides $c_{33}$.
  By Lemma \ref{lem:x3elemofF} we know that $6x^{33} \in \langle F \rangle$ and therefore also $c_{33}x^{33} \in \langle F \rangle$.
  The polynomial $c_{33}x^{33}$ also lies in $M$ by the definition of $M$. 
 By the induction hypothesis we get that $c_{27}x^{27} + c_{21}x^{21} + c_{15}x^{15} + c_9x^9 + c_3x^3 \in \langle F \rangle$.
 Hence we get \eqref{eq:elem4ofF}. \\
 If $n > 33$ we define $p(x):=\sum_{i=0}^{n}c_ix^i$.
 If $n \not\in \eqclass{6}{3}$ we have $c_n = 0$ and therefore the induction hypothesis yields that $p(x) \in \langle F \rangle$.
 Now let $n \in \eqclass{6}{3} $.
 Then we know by Lemma \ref{lem:cxiax15bx33inF} that $q(x):=3^{ \frac{s_3(n)-1}{2}}x^{n} + a(n)x^{15} + b(n)x^{33}$ lies in $\langle F \rangle$.
 Since $c_n$ is a multiple of $3^{ \frac{s_3(n)-1}{2}}$ there exists $m \in \Z$ such that  $c_n = m \cdot 3^{ \frac{s_3(n)-1}{2}}$.
 Since $M$ is closed under $+, -$, and since $q(x)$ also lies in $M$ we get
  $p(x) - m \cdot q(x) \in M$.
 Since $\deg(p(x) - m \cdot q(x) ) < n$ we know by the induction hypothesis that $p(x) - m \cdot q(x) \in \langle F \rangle $.
 Since $ m \cdot q(x) \in \langle F \rangle $ we also get $p(x) \in \langle F \rangle$.
\end{proof}

\begin{thm} \label{thm:x3}
A polynomial $p = \sum_{ i=0 }^n c_ix^i \in \Z[x]$ lies in the subnearring of \\ ${\algop{\Z[x]}{+, \circ}}$ 
that is generated by $\{ x^3 \}$ if and only if for all $i \in \N_0$ the following conditions hold:
\begin{enumerate}
\item If $ i \not\in \eqclass{6}{3}$ then $c_i =0$.
\item If $ i \in \eqclass{6}{3}$ then $3^{ \frac{s_3(i)-1}{2}}$ divides $ c_i.$ 
\item  $2$ divides $ \SUM{j \in \eqclass{24}{15, 21}}{}{c_j} $.
\item  $ 2 $ divides $\SUM{\noSp{ j \in \eqclass{72}{3, 33 , 45, 51, 57, 63} }{j \neq 3} }{}{c_j} $. 
\end{enumerate}
\end{thm}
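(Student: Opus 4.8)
The plan is to apply Lemma~\ref{lem:genLem1} with $F := \{x^3\}$ and with $M$ the subset of $\Z[x]$ defined at the start of Section~\ref{sec:x3}, observing that conditions (1)--(4) of the theorem are exactly the defining conditions of $M$. Thus it suffices to verify the four hypotheses of Lemma~\ref{lem:genLem1}, after which that lemma yields $\langle F \rangle = M$, which is the assertion of the theorem. The inclusion $F \subseteq M$ is immediate: $x^3$ has support $\{3\} \subseteq \eqclass{6}{3}$, the coefficient $1$ is divisible by $3^{(s_3(3)-1)/2} = 3^0 = 1$, and since $3 \notin A$ and $3 \notin B$ both sums $\SUM{j \in A}{}{c_j}$ and $\SUM{j \in B}{}{c_j}$ vanish. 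The closure of $M$ under $+$ and $-$ is equally routine, since the support condition and the divisibility condition hold coefficient-wise, while the two parity conditions are linear congruences modulo $2$ and hence are preserved.

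The inclusion $M \subseteq \langle F \rangle$, which is hypothesis~(2) of Lemma~\ref{lem:genLem1}, is precisely Lemma~\ref{lem:x3MsubF} and may be cited directly; this is where the conceptual weight of the whole development lies. The remaining and substantive hypothesis is $F \circ M \subseteq M$, that is, $x^3 \circ p = p^3 \in M$ for every $p = \SUM{i=0}{n}{c_i x^i} \in M$. I would check the four defining conditions of $M$ for $P := p^3 = \SUM{j}{}{C_j x^j}$ in turn. The support condition is clear, since every exponent of $p^3$ is a sum $r+s+t$ of three exponents of $p$, each lying in $\eqclass{6}{3}$, and $r+s+t \equiv 9 \equiv 3 \pmod 6$. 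The two parity conditions, $2 \mid \SUM{j \in A}{}{C_j}$ and $2 \mid \SUM{j \in B}{}{C_j}$, are exactly the statements of Lemma~\ref{lem:equiv24} and Lemma~\ref{lem:equiv72}.

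The one genuinely new computation is the $3$-adic divisibility condition for $p^3$. Writing $e(m) := (s_3(m)-1)/2$, I would expand the cube of a sum so that the coefficient $C_m$ of $x^m$ is a sum of contributions of the three shapes $c_r^3$ (from $m = 3r$), $3 c_r^2 c_s$ (from $m = 2r+s$, $r \ne s$), and $6 c_r c_s c_t$ (from $m = r+s+t$, $r<s<t$), with all of $r,s,t \in \eqclass{6}{3}$; it then suffices that each such contribution be divisible by $3^{e(m)}$. Here I would use two elementary facts about the base-$3$ digit sum: subadditivity, $s_3(a+b) \le s_3(a)+s_3(b)$, and the congruence $s_3(a) \equiv a \pmod 2$ (so that $e(m)$ is a nonnegative integer for $m \in \eqclass{6}{3}$). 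For the first shape, $s_3(3r) = s_3(r)$ gives $e(3r) = e(r)$, and $c_r^3$ is divisible by $3^{3e(r)}$, hence by $3^{e(m)}$. For the second shape, $2 s_3(r) + s_3(s) \ge s_3(2r+s) = s_3(m)$ yields $1 + 2e(r) + e(s) = (2 s_3(r)+s_3(s)-1)/2 \ge e(m)$, so the factor $3$ together with $3^{2e(r)+e(s)} \mid c_r^2 c_s$ suffices. The third shape is handled identically, the factor $6$ supplying the same single extra power of $3$. Since every contribution is divisible by $3^{e(m)}$, so is $C_m$.

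Assembling these verifications, Lemma~\ref{lem:genLem1} gives $\langle F \rangle = M$, which is the theorem. The main obstacle is not within this assembly but in its hardest input, Lemma~\ref{lem:x3MsubF}; within the present proof the only delicate point is the $3$-adic divisibility above, where the multinomial factors $\binom{3}{2,1} = 3$ and $\binom{3}{1,1,1} = 6$ exactly compensate for the extremal case in which the base-$3$ additions produce no carries and the digit sum is fully additive.
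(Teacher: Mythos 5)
Your proposal is correct and follows exactly the paper's own route: apply Lemma~\ref{lem:genLem1} with $F=\{x^3\}$ and the set $M$ from Section~\ref{sec:x3}, citing Lemma~\ref{lem:x3MsubF} for $M\subseteq\langle F\rangle$ and Lemmas~\ref{lem:equiv24} and~\ref{lem:equiv72} for the parity conditions in $F\circ M\subseteq M$. The only difference is that you spell out the $3$-adic divisibility of the coefficients of $p^3$ (which the paper dismisses as ``easily verified''), and your digit-sum argument for that step is correct.
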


\begin{proof}
We use Lemma \ref{lem:genLem1} with $F := \{ x^3 \}$.
We know that $F \subseteq M$ and $M$ is closed under $+, -$.
Let $P= \sum_{ i=0 }^m C_ix^i \in \Z[x]$ such that $P=p^3$.
It can be easily verified for all $i \leq m$ that if $ i \not\in \eqclass{6}{3}$ then $C_i =0$
and if $ i \in \eqclass{6}{3}$ then $3^{ \frac{s_3(i)-1}{2}}$ divides $C_i$.
Therefore, with Lemma \ref{lem:equiv24} and Lemma \ref{lem:equiv72} it follows that $F \circ M \subseteq M$. 
By Lemma \ref{lem:x3MsubF} we also have  $M \subseteq \langle  F \rangle$.
Therefore, Lemma \ref{lem:genLem1} yields $M= \langle F \rangle$.
\end{proof}

\begin{thm} \label{thm:xx3}
A polynomial $p = \sum_{ i=0 }^n c_ix^i \in \Z[x]$ lies in the subnearring of \\ ${\algop{\Z[x]}{+, \circ}}$ 
that is generated by $\{x, x^3 \}$ if and only if for all $i \in \N_0$ the following conditions hold:
\begin{enumerate}
 \item If $ i \not\in \eqclass{2}{1}$  then $c_i = 0$.
 \item If $ i  \in \eqclass{2}{1}$  then $ 3^{\frac{s_3(i) -1}{2}}$ divides $c_i$.
 \item $2$ divides $ \SUM{j \in \eqclass{8}{5, 7} }{}{c_j} $.
 \item $2$ divides $ \SUM{ \noSp{j \in \eqclass{24}{1, 11, 15, 17, 19, 21 } }{j \neq 1} }{}{c_j} $ .
\end{enumerate}
\end{thm}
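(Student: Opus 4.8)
The plan is to reduce the statement to Theorem~\ref{thm:x3} by means of Lemma~\ref{lem:genLem3}, following the same pattern by which Theorem~\ref{thm:x2} was obtained from the description of $\langle\{x,x^2\}\rangle$. The polynomial $x^3$ has degree at least $1$ and is therefore right cancellable in $\algop{\Z[x]}{+,\circ}$, while $x$ is the left identity. Applying Lemma~\ref{lem:genLem3} with $a := x^3$ and $e := x$ yields
\[
  \langle\{x,x^3\}\rangle = \{p \in \Z[x] \mid p \circ x^3 \in \langle\{x^3\}\rangle\}.
\]
Writing $p = \sum_{i=0}^n c_i x^i$, we have $p \circ x^3 = \sum_{i=0}^n c_i x^{3i}$, so the coefficient of $x^j$ in $p \circ x^3$ equals $c_{j/3}$ if $3 \mid j$ and $0$ otherwise. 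Hence $p \in \langle\{x,x^3\}\rangle$ if and only if $p \circ x^3$ meets the four conditions of Theorem~\ref{thm:x3}, and the entire argument amounts to rewriting those four conditions as conditions on the $c_i$.

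Conditions~(1) and~(2) of Theorem~\ref{thm:x3} translate at once. A coefficient of $p \circ x^3$ can be nonzero only at an exponent $j = 3i$, and $3i \in \eqclass{6}{3}$ holds precisely when $i$ is odd; thus condition~(1) becomes ``$i \notin \eqclass{2}{1} \Rightarrow c_i = 0$''. For condition~(2) we need $3^{(s_3(3i)-1)/2}$ to divide $c_i$ for each odd $i$, and the identity $s_3(3i) = s_3(i)$, which holds because multiplication by $3$ simply appends a zero digit in base $3$, exactly as $s_2(2i)=s_2(i)$ was used in the proof of Theorem~\ref{thm:x2}, turns this into ``$i \in \eqclass{2}{1} \Rightarrow 3^{(s_3(i)-1)/2} \mid c_i$''.

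The only step requiring genuine care is the translation of the two parity conditions, and here I would record the modular equivalences
\[
  3i \in \eqclass{24}{15,21} \iff i \in \eqclass{8}{5,7}
\]
and
\[
  3i \in \eqclass{72}{3,33,45,51,57,63} \iff i \in \eqclass{24}{1,11,15,17,19,21},
\]
each of which follows from $3i \equiv r \pmod{m} \iff i \equiv r/3 \pmod{m/3}$, valid since $3 \mid r$ and $3 \mid m$ for the moduli $m \in \{24,72\}$ and residues $r$ at hand. Since every exponent occurring in $A$ and in $B$ is a multiple of $3$, and since the coefficient of $x^{3i}$ in $p \circ x^3$ is $c_i$, the map $j \mapsto j/3$ is a bijection from $A$ onto $C$; condition~(3) of Theorem~\ref{thm:x3}, namely $2 \mid \sum_{j \in A} c_j$, therefore becomes $2 \mid \sum_{j \in C} c_j$. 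For condition~(4) the second equivalence applies, the excluded exponent $j=3$ corresponding to the excluded index $i=1$, so that $\eqclass{72}{3,33,45,51,57,63}\setminus\{3\} = B$ translates to $\eqclass{24}{1,11,15,17,19,21}\setminus\{1\} = D$, and condition~(4) becomes $2 \mid \sum_{j \in D} c_j$. Collecting the four translated conditions reproduces the statement exactly. I expect no obstacle beyond this bookkeeping: the residue computations are the analogues, in the more convenient ``divide by $3$'' direction, of Lemmas~\ref{lem:calc24} and~\ref{lem:calc72}, and can be checked residue class by residue class.
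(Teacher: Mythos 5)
Your proposal is correct and follows essentially the same route as the paper: apply Lemma~\ref{lem:genLem3} with $a=x^3$, $e=x$ to reduce membership in $\langle\{x,x^3\}\rangle$ to membership of $p\circ x^3$ in $\langle\{x^3\}\rangle$, then translate the four conditions of Theorem~\ref{thm:x3} via $j\mapsto j/3$ using $s_3(3i)=s_3(i)$ and the residue correspondences $\eqclass{24}{15,21}\leftrightarrow\eqclass{8}{5,7}$ and $\eqclass{72}{3,33,45,51,57,63}\leftrightarrow\eqclass{24}{1,11,15,17,19,21}$. The bookkeeping checks out, so no further comment is needed.
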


\begin{proof}
``$\Rightarrow$'':  
By Lemma \ref{lem:genLem3} we know that $q := p(x^3) \in  \langle \{x^3\} \rangle$.
Let $q(x) = \sum_{j =0}^{3n} \tilde{c_j} x^j$. 
For condition (1) let $i$ be an even number.
Then by Theorem \ref{thm:x3}, $c_i = \tilde{c}_{3i} = 0$ because $3i \not \in \eqclass{6}{3}$.
For condition (2) let $i$ be odd.
Then $3i \in \eqclass{6}{3}$, hence $3^{\frac{s_3(3i) -1}{2}} \mid \tilde{c}_{3i} = c_i$.
Now the result follows from $s_3(3i)=s_3(i)$.
Now we show (3).
By Theorem \ref{thm:x3}, $2 \mid \sum_{j \in \eqclass{24}{15, 21}} \tilde{c}_j$ and
since $c_j = \tilde{c}_{3j}$ for all $j \in \{0, \ldots, n\}$ we also have
that $2$ divides $\sum_{j \in \eqclass{24}{15, 21}} c_{\frac{j}{3}} =  \sum_{j \in \eqclass{8}{5, 7}} c_j$.
Condition (4) is proved similarly. 
``$\Leftarrow$'': 
By Theorem \ref{thm:x3}, $p(x^3) = \sum_{i = 0}^{n} c_i x^{3i}$ lies in $\langle \{x^3 \} \rangle$ 
because $s_3(3i) = s_3(i)$ and 
$c_i = \tilde{c}_{3i}$ for all $i \in \N_0$.
Therefore Lemma \ref{lem:genLem3} yields $p \in \langle \{x, x^3\} \rangle$.
\end{proof}

\begin{thm} \label{thm:1x3}
A polynomial $p = \sum_{ i=0 }^n c_ix^i \in \Z[x]$ lies in the subnearring of $\algop{\Z[x]}{+, \circ} $ that is generated
by $\{1, x^3 \}$ if and only if for all $i \in \N_0$ the following conditions hold:
\begin{enumerate}
 \item If $ i \not\in \eqclass{3}{0}$  then $c_i = 0$.
 \item If $ i  \in \eqclass{3}{0}$  then $ 3^{\lfloor \frac{s_3(i)}{2} \rfloor}$ divides $c_i$.
\end{enumerate}
\end{thm}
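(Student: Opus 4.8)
The plan is to mirror the proof of Theorem~\ref{thm:1x2}, using Lemma~\ref{lem:genLem2} to pass between the nearring $F := \langle \{1,x^3\} \rangle$ and the nearring $G := \langle \{1,x,x^3\} \rangle$, whose element description is already available as Theorem~1.3 of~\cite{Ai:GPUF}. Concretely, I would apply Lemma~\ref{lem:genLem2} with the right cancellable element $a := x^3$, the left identity $e := x$, and the constant set $C := \{1\} \subseteq C(\Z[x])$. This yields $G = \{ n \in \Z[x] \mid n \circ x^3 \in F \}$, which is the bridge between the two descriptions.

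For the ``$\Rightarrow$'' direction I would first observe that $F \subseteq \Z[x^3]$: both generators $1$ and $x^3$ involve only exponents divisible by $3$, and the set of such polynomials is closed under $+$, $-$ and $\circ$. Hence any $p \in F$ can be written as $p = q \circ x^3 = q(x^3)$ for a $q = \sum_j \tilde{c}_j x^j \in \Z[x]$ that is unique since $x^3$ is right cancellable, and $q \circ x^3 = p \in F$ forces $q \in G$ by Lemma~\ref{lem:genLem2}. Theorem~1.3 of~\cite{Ai:GPUF} then gives $3^{\lfloor s_3(j)/2 \rfloor} \mid \tilde{c}_j$ for all $j$. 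Reading off $p = \sum_j \tilde{c}_j x^{3j}$ shows $c_i = 0$ whenever $i \not\in \eqclass{3}{0}$, which is condition~(1), while for $i = 3j$ we have $c_i = \tilde{c}_j$, so the divisibility becomes $3^{\lfloor s_3(i)/2 \rfloor} \mid c_i$, which is condition~(2), after invoking the elementary identity $s_3(3j) = s_3(j)$.

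For the ``$\Leftarrow$'' direction I would reverse the argument. Given a $p$ satisfying (1) and (2), condition~(1) lets me write $p = q(x^3)$ with $q = \sum_j \tilde{c}_j x^j$ and $\tilde{c}_j = c_{3j}$. Condition~(2) together with $s_3(3j) = s_3(j)$ gives $3^{\lfloor s_3(j)/2 \rfloor} \mid \tilde{c}_j$ for all $j$, so Theorem~1.3 of~\cite{Ai:GPUF} yields $q \in G$. Then $q \circ x^3 = p$ and Lemma~\ref{lem:genLem2} give $p \in F = \langle \{1,x^3\} \rangle$.

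I expect no serious obstacle: the whole argument is the $x^2 \mapsto x^3$ analogue of Theorem~\ref{thm:1x2}, and all the real work has been front-loaded into Lemma~\ref{lem:genLem2} and the cited description of $\langle \{1,x,x^3\} \rangle$. The only points requiring a moment of care are the closure of $\Z[x^3]$ under composition, which justifies $F \subseteq \Z[x^3]$ and hence the representation $p = q(x^3)$; the uniqueness of $q$ via right cancellability of $x^3$; and the digit-sum identity $s_3(3j) = s_3(j)$, which holds because multiplying by $3$ merely appends a trailing zero to the base-$3$ representation and thus leaves the digit sum unchanged.
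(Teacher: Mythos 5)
Your proposal is correct and follows essentially the same route as the paper: both apply Lemma~\ref{lem:genLem2} with $a=x^3$, $e=x$, $C=\{1\}$ to reduce the description of $\langle\{1,x^3\}\rangle$ to that of $\langle\{1,x,x^3\}\rangle$ from Theorem~1.3 of~\cite{Ai:GPUF}, via the identity $s_3(3i)=s_3(i)$. If anything, you are slightly more explicit than the paper about why $p\in\langle\{1,x^3\}\rangle$ admits a representation $p=q(x^3)$ (closure of $\Z[x^3]$ under the nearring operations), which is a point the paper's one-line forward direction leaves implicit.
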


\begin{proof}
``$\Rightarrow$'':  
By Lemma \ref{lem:genLem2} we know that there exists $q \in \langle \{1, x, x^3 \} \rangle$ 
such that $p = q(x^3)$.
By Theorem 1.3 of~\cite{Ai:GPUF} it follows that 
$p$ satisfies the conditions (1) and (2) 
because
$s_3(3i) = s_3(i)$ for all $i \in \N_0$.
``$\Leftarrow$'': 
Let $q \in \Z[x]$ such that $q \circ x^3 = p$.
Then $q$ lies in $ \langle \{1, x, x^3 \} \rangle$ by Theorem 1.3 of~\cite{Ai:GPUF} because $s_3(3i) = s_3(i)$ for all $i \in \N_0$.
Therefore Lemma \ref{lem:genLem2} yields $p \in \langle \{1, x^3\} \rangle$.
\end{proof}

\section*{Acknowledgements}
The authors thank Gleb Pogudin for discussions leading to Theorem~\ref{thm:indep}.

\def\cprime{$'$}
\providecommand{\bysame}{\leavevmode\hbox to3em{\hrulefill}\thinspace}
\providecommand{\MR}{\relax\ifhmode\unskip\space\fi MR }
\providecommand{\MRhref}[2]{%
  \href{http://www.ams.org/mathscinet-getitem?mr=#1}{#2}
}
\providecommand{\href}[2]{#2}

\end{document}